\newcommand{\newreptheorem}[2]{\newtheorem*{rep@#1}{\rep@title}
	\newenvironment{rep#1}[1]{\def\rep@title{#2 \ref*{##1}}\begin{rep@#1}}{\end{rep@#1}}
}
\newcommand{\secref}[1]{Section~\ref{#1}}
\newcommand{\thmref}[1]{Theorem~\ref{#1}}
\newcommand{\lemref}[1]{Lemma~\ref{#1}}
\newcommand{\algref}[1]{Algorithm~\ref{#1}}
\newcommand{\appref}[1]{Appendix~\ref{#1}}
\DeclareMathOperator*{\argmin}{arg\,min}
\newcommand{\Reg}{\textsc{R}_\textnormal{T}}
\newcommand{\Hv}{ \mathbf{V} }
\newcommand{\Hvs}{ \mathbf{\tilde V} }
\newcommand{\F}{ \mathbf{F} }
\newcommand{\Fs}{ \mathbf{\tilde F} }
\newcommand{\eps}{\varepsilon}
\renewcommand{\eps}{\epsilon}
\newcommand{\tnabla}{\widetilde{\nabla}}
\newcommand{\Expect}[1]{\mathbb E \left[\,#1\,\right]} 
\newcommand{\norm}[1]{\lVert {#1} \rVert} 
\newcommand{\ip}[2]{\langle{#1},{#2}\rangle} 
\newcommand{\proj}[1]{\Pi_{\mathcal X} \left( {#1} \right)}
\newcommand{\br}[1]{\left({#1}\right)} 
\newcommand{\bc}[1]{\left\{{#1}\right\}} 
\newcommand{\bs}[1]{\left[{#1}\right]} 
\newcommand{\ignore}[1]{}
\colorlet{MyRed}{Crimson!90!Black}
\colorlet{MyBlue}{MediumBlue!90!Black}
\colorlet{MyGreen}{DarkGreen!80!Black}
\newcommand{\cmark}{\text{\ding{51}}}		
\newcommand{\xmark}{\text{\ding{55}}}		
\crefname{algorithm}{Algorithm}{Algorithms}
\crefname{equation}{Eq.}{Eqs.}
\theoremstyle{plain}
\newtheorem{theorem}{Theorem}[section]
\newtheorem{lemma}{Lemma}[section]
\newtheorem{assumption}{Assumption}[section]
\newtheorem{proposition}{Proposition}[section]
\newtheorem*{corollary*}{Corollary}		
\theoremstyle{definition}
\newtheorem*{definition*}{Definition}		
\newtheorem*{assumption*}{Assumptions}		
\newtheorem*{example*}{Example}		
\theoremstyle{remark}
\newtheorem{remark}{Remark}	[section]	
\newtheorem*{remark*}{Remark}		
\newcommand{\debug}[1]{#1}		
\newcommand{\newmacro}[2]{\newcommand{#1}{\debug{#2}}}		
\newcommand{\newop}[2]{\DeclareMathOperator{#1}{\debug{#2}}}		
\DeclarePairedDelimiterX{\setdef}[2]{\{}{\}}{#1:#2}		
\DeclarePairedDelimiterXPP{\exclude}[1]{\mathopen{}\setminus}{\{}{\}}{}{#1}
\newcommand{\R}{\mathbb{R}}		
\DeclareMathOperator{\bigoh}{\mathcal{O}}		
\DeclarePairedDelimiterXPP{\bigof}[1]{\bigoh}{(}{)}{}{#1}		
\DeclareMathOperator{\dist}{dist}		
\DeclareMathOperator{\relint}{ri}		
\newcommand{\ie}{i.e.,\xspace}		
\newcommand{\alt}[1]{#1'}		
\newcommand{\altalt}[1]{#1''}		
\newmacro{\dd}{\:d}		
\newmacro{\const}{C}		
\newmacro{\constalt}{B}		
\newmacro{\constdual}{\const_{\texttt{dual}}}		
\newmacro{\conststoch}{\constalt}		
\newmacro{\multi}{\texttt{H}}		
\newmacro{\coef}{\lambda}		
\newmacro{\param}{\theta}		
\newmacro{\params}{\Theta}		
\newmacro{\pexp}{r}		
\newmacro{\qexp}{q}		
\newmacro{\rexp}{r}		
\newmacro{\beforestart}{0}		
\newmacro{\start}{1}		
\newmacro{\afterstart}{2}		
\newmacro{\running}{\start,\afterstart,\dotsc\,}		
\newmacro{\run}{t}		
\newmacro{\runprev}{\run-1}		
\newmacro{\runalt}{s}		
\newmacro{\runaltalt}{\alt \run}		
\newmacro{\nRuns}{T}		
\newmacro{\runs}{\mathcal{\nRuns}}		
\newmacro{\state}{X}		
\newmacro{\stateavg}{\bar\state}		
\newmacro{\stateopt}{\tilde\state}		
\newmacro{\statealt}{\alt\state}		
\newmacro{\query}{x}		
\newmacro{\out}{\bar\query}		
\newcommand{\beforeinit}[1][\state]{\debug{#1}_{\beforestart}}		
\newcommand{\init}[1][\state]{\debug{#1}_{\start}}		
\newcommand{\iter}[1][\state]{\debug{#1}_{\runalt}}		
\newcommand{\iterlead}[1][\state]{\debug{#1}_{\runalt+\frac{1}{2}}}		
\newcommand{\prev}[1][\state]{\debug{#1}_{\run-1}}		
\newcommand{\curr}[1][\state]{\debug{#1}_{\run}}		
\renewcommand{\next}[1][\state]{\debug{#1}_{\run+1}}		
\newcommand{\beforelead}[1][\state]{\debug{#1}_{\run-\frac{1}{2}}}		
\newcommand{\lead}[1][\state]{\debug{#1}_{\run+\frac{1}{2}}}		
\newcommand{\last}[1][\state]{\debug{#1}_{\nRuns}}		
\newcommand{\lastlead}[1][\state]{\debug{#1}_{\nRuns+\frac{1}{2}}}		
\newop{\Eq}{Eq}		
\newop{\Nash}{NE}		
\newop{\brep}{br}		
\newmacro{\reg}{\mathcal{R}}		
\newop{\regstoch}{\tilde{\reg}}		
\newop{\preg}{Reg}		
\newop{\val}{val}		
\newmacro{\play}{i}		
\newmacro{\playalt}{j}		
\newmacro{\playaltalt}{k}		
\newmacro{\nPlayers}{N}		
\newmacro{\players}{\mathcal{\nPlayers}}		
\newmacro{\pure}{a}		
\newmacro{\purealt}{\beta}		
\newmacro{\purealtalt}{\gamma}		
\newmacro{\nPures}{n}		
\newmacro{\pures}{\mathcal{A}}		
\newmacro{\strat}{p}		
\newmacro{\stratalt}{\alt\strat}		
\newmacro{\strataltalt}{\altalt\strat}		
\newmacro{\strats}{\mathcal{X}}		
\newmacro{\intstrats}{\strats^{\circle}}		
\newmacro{\pay}{u}		
\newmacro{\payv}{v}		
\newmacro{\loss}{\ell}
\newmacro{\lossv}{\sample}
\newmacro{\pot}{\Phi}		
\newmacro{\meanpot}{\pot}		
\newmacro{\game}{\Gamma}		
\newmacro{\meangame}{\game}		
\newmacro{\gameall}{\game(\players,\points,\loss)}		
\newmacro{\fingame}{\Gamma}		
\newmacro{\fingameall}{\Gamma(\players,\pures,\pay)}		
\newmacro{\gmat}{g}		
\newmacro{\gdist}{\dist_{\gmat}}
\newmacro{\mfld}{M}		
\newmacro{\form}{\omega}		
\newmacro{\tvec}{z}		
\newmacro{\uvec}{u}		
\newmacro{\ball}{\mathbb{B}}		
\newmacro{\vecspace}{\mathcal{V}}		
\newmacro{\subspace}{\mathcal{W}}		
\newmacro{\bvec}{e}		
\newmacro{\bvecs}{\mathcal{E}}		
\newmacro{\coord}{i}		
\newmacro{\coordalt}{\coord^{\prime}}		
\newmacro{\coordaltalt}{\coordalt^{\prime}}		
\newmacro{\nCoords}{d}		
\newmacro{\dims}{\nCoords}		
\newmacro{\vdim}{\nCoords}		
\newmacro{\pspace}{\vecspace}		
\newmacro{\dspace}{\vecspace^{\ast}}		
\newmacro{\pstate}{z}		
\newmacro{\dstate}{Y}		
\newmacro{\dvec}{{v}}		
\newmacro{\disdstate}{\hat\dstate}		
\newcommand{\half}[2]{#1_{#2 + 1/2}}		
\newmacro{\Anc}{\textsf{r}}
\newmacro{\anchor}{R}
\newmacro{\ptest}{\tilde{\pstate}}		
\newmacro{\test}{\tilde{\state}}		
\newmacro{\dtest}{\tilde{\drecom}}		
\newmacro{\testsignal}{\tilde{\signal}}		
\newmacro{\precom}{\pstate}	
\newmacro{\drecom}{\dstate}		
\newmacro{\dispstate}{Z}		
\newmacro{\dispstatealt}{\bar\dispstate}		
\newmacro{\dpoint}{y}		
\newmacro{\dpointave}{\bar\dpoint}		
\newmacro{\dpointalt}{W}		
\newmacro{\dpointaltalt}{\altalt\dpoint}		
\newmacro{\dpoints}{\mathcal{Y}}		
\newmacro{\sgrad}{g}
\newmacro{\shess}{H}
\newmacro{\vargrad}{\sigma_g}
\newmacro{\varhess}{\sigma_H}
\newmacro{\wgrad}{a}                
\newmacro{\wgradsum}{A}
\newmacro{\wavg}{b}                 
\newmacro{\wavgsum}{B}
\newmacro{\eucdiam}{D}
\newmacro{\mat}{\mathbf{M}}		
\newmacro{\hmat}{\mathbf{H}}		
\newmacro{\ones}{\mathbf{1}}		
\newmacro{\eye}{\mathbf{I}}		
\newmacro{\zer}{\mathbf{0}}		
\DeclarePairedDelimiterXPP{\dnormdef}[1]{}{\lVert}{\rVert}{_{\ast}}{#1}
\DeclarePairedDelimiterXPP{\dnorm}[1]{}{\lVert}{\rVert}{_{\ast}}{#1}
\DeclarePairedDelimiterXPP{\onenorm}[1]{}{\lVert}{\rVert}{_{1}}{#1}		
\DeclarePairedDelimiterXPP{\twonorm}[1]{}{\lVert}{\rVert}{_{2}}{#1}		
\DeclarePairedDelimiterXPP{\supnorm}[1]{}{\lVert}{\rVert}{_{\infty}}{#1}		
\DeclarePairedDelimiterXPP{\altdnorm}[1]{}{\lVert}{\rVert'}{_{\ast}}{#1}
\DeclarePairedDelimiterX{\braket}[2]{\langle}{\rangle}{#1\mathopen{},\mathopen{}#2}
\DeclarePairedDelimiterX{\inner}[2]{\langle}{\rangle}{#1,#2}		
\newmacro{\cartprod}{\bigtimes}
\newop{\Opt}{Opt}		
\newop{\Sol}{Sol}		
\newop{\orcl}{\mathsf{G}}		
\newmacro{\obj}{f}		
\newmacro{\objalt}{\alt \obj}		
\newmacro{\sobj}{F}		
\newmacro{\func}{\textsl{g}}
\newmacro{\gvec}{g}		
\newmacro{\oper}{A}		
\newmacro{\vecfield}{v}		
\newmacro{\seed}{\sample}		
\newcommand{\sol}[1][\point]{#1^{\ast}}		
\newmacro{\gbound}{G}		
\newmacro{\lips}{L}
\newmacro{\strong}{\kappa}		
\newmacro{\smooth}{\lips}		
\newmacro{\cvx}{\mathcal{K}}		
\newmacro{\compact}{\mathcal{X}}		
\newmacro{\subd}{\partial}		
\newmacro{\subsel}{\nabla}		
\newmacro{\minmax}{L}		
\newmacro{\minvar}{\theta}		
\newmacro{\minvaralt}{\alt\minvar}		
\newmacro{\minvars}{\Theta}		
\newmacro{\maxvar}{\phi}		
\newmacro{\maxvaralt}{\alt\maxvar}		
\newmacro{\maxvars}{\Phi}		
\newmacro{\hreg}{h}		
\newmacro{\proxdom}{\points_{\hreg}}		
\newmacro{\breg}{D}		
\newmacro{\mprox}{P}		
\newmacro{\fench}{F}		
\newmacro{\mirror}{Q}		
\newmacro{\hstr}{K_{\hreg}}		
\newmacro{\bregdiam}{B_{\hreg}}		
\newmacro{\range}{R_{\hreg}}		
\DeclarePairedDelimiterXPP{\proxof}[2]{\mprox_{#1}}{(}{)}{}{#2}		
\DeclarePairedDelimiterXPP{\bregof}[2]{\breg}{(}{)}{}{#1,#2}		
\DeclarePairedDelimiterXPP{\fenchof}[2]{\fench}{(}{)}{}{#1,#2}		
\newmacro{\zone}{\mathbb{D}}		
\newop{\Eucl}{\Pi}		
\newop{\logit}{\Lambda}		
\newmacro{\subgrad}{W}
\newmacro{\point}{x}		
\newmacro{\pointalt}{\alt\point}		
\newmacro{\pointaltalt}{\altalt\point}		
\newmacro{\points}{\mathcal{X}}		
\newmacro{\intpoints}{\relint\points}		
\newmacro{\base}{\point^{\ast}}		
\newmacro{\basealt}{u^{\ast}}		
\newmacro{\real}{x}
\newmacro{\realalt}{\alt\real}
\newmacro{\realaltalt}{\altalt \real}
\newmacro{\open}{\mathcal{U}}		
\newmacro{\closed}{\mathcal{C}}		
\newmacro{\cpt}{\mathcal{K}}		
\newmacro{\nhd}{\mathcal{U}}		
\newop{\ex}{\mathbb{E}}		
\newop{\prob}{\mathbb{P}}		
\newop{\Var}{Var}		
\newop{\simplex}{\Delta}		
\DeclarePairedDelimiterXPP{\exof}[1]{\ex}{[}{]}{}{
	 #1}
\DeclarePairedDelimiterXPP{\probof}[1]{\prob}{(}{)}{}{
	 #1}
\newmacro{\sample}{\omega}		
\newmacro{\samples}{\Omega}		
\newmacro{\filter}{\mathcal{F}}		
\newmacro{\probspace}{(\samples,\filter,\prob)}		
\newmacro{\event}{E}       
\newmacro{\eventalt}{H}       
\newmacro{\mean}{\mu}		
\newmacro{\sdev}{\sigma}		
\newmacro{\variance}{\sdev^{2}}		
\newmacro{\step}{\gamma}    
\newmacro{\stepalt}{\gamma}		
\newmacro{\stepaltalt}{\lambda}		
\newmacro{\stepada}{\theta}		
\newmacro{\stepscale}{\beta_0}
\newmacro{\learn}{\eta}		
\newmacro{\dstep}{\psi}		
\newmacro{\proper}{\tau}		
\newmacro{\signal}{\gvec}		
\newmacro{\altsignal}{\bar\signal}		
\newmacro{\error}{Z}		
\newmacro{\noise}{U}		
\newmacro{\bias}{b}		
\newmacro{\brown}{W}		
\newmacro{\serror}{\theta}		
\newmacro{\snoise}{\xi}		
\newmacro{\sbias}{\psi}		
\newmacro{\sbound}{M}		
\newmacro{\bbound}{B}		
\newmacro{\noisepar}{\sdev}		
\newmacro{\noisevar}{\texttt{var}}		
\newcommand{\gd}{\debug{\textsc{Gd}}\xspace}
\newcommand{\sgd}{\debug{\textsc{Sgd}}\xspace}
\newcommand{\newton}{\debug{\textsc{Newton's}}\xspace}
\newcommand{\method}{\debug{\textsc{Extra-Newton}}\xspace}
\newcommand{\adagrad}{\debug{\textsc{AdaGrad}}\xspace}
\newcommand{\accelegrad}{\debug{\textsc{AcceleGrad}}\xspace}
\newcommand{\unixgrad}{\debug{\textsc{UnixGrad}}\xspace}
\newcommand{\fenchelgame}{\debug{\textsc{FenchelGame}}\xspace}
\newmacro{\cost}{c}
\newmacro{\boundcoord}{a}  
\newmacro{\smoothcoord}{b}		
\newmacro{\noiseave}{\tilde{\noise}}  
\newmacro{\noisetest}{\tilde{\noise}}  
\newmacro{\mindiff}{\rho}
\newmacro{\diff}{H}
\newmacro{\resource}{s}
\newmacro{\nResources}{d}
\newmacro{\resources}{\mathcal{S}}
\newmacro{\inflow}{\rho}
\newmacro{\load}{x}
\newmacro{\spectron}{\mathcal{D}}
\newmacro{\nInput}{M}
\newmacro{\nOutput}{N}
\newmacro{\chanmat}{\mathbf{H}}
\newmacro{\covmat}{\mathbf{X}}
\newmacro{\bx}{\mathbf{x}}
\newmacro{\by}{\mathbf{y}}
\newmacro{\bz}{\mathbf{z}}
\newmacro{\capa}{R}
\newmacro{\power}{P}
\newmacro{\maxsel}{m}
\newmacro{\shape}{\chi}
\newmacro{\diampoints}{\norm{\points}}
\newmacro{\underconst}{\const}
\newmacro{\slow}{\mathrm{slow}}
\newmacro{\fast}{\mathrm{fast}}
\newcommand{\indep}{\perp\!\!\!\!\perp} 
\begin{document}

\title
[Extra-Newton]
{Extra-Newton: A First Approach to Noise-Adaptive Accelerated Second-Order Methods}

\author
[K.~Antonakopoulos]
{Kimon Antonakopoulos$^{\boldsymbol{\sharp}, \dag}$}
\address{$^{\boldsymbol{\sharp}}$\,%
EQUAL CONTRIBUTION.}
\email{kimon.antonakopoulos@epfl.ch}

\author
[A.~Kavis]
{Ali Kavis$^{\boldsymbol{\sharp}, \dag}$}
\address{$^{\dag}$\,%
Laboratory for Information and Inference Systems, IEM STI EPFL, Lausanne, Switzerland.}
\email{ali.kavis@epfl.ch}

\author
[V.~Cevher]
{Volkan Cevher$^{\dag}$}
\email{volkan.cevher@epfl.ch}

\subjclass[2020]{Primary 90C25, 90C15; secondary 68Q32, 68T05.}
\keywords{%
Universal methods;
dimension-freeness;
dual extrapolation;
rate interpolation.}

\thanks{}

\etocdepthtag.toc{mtchapter}
\etocsettagdepth{mtchapter}{subsection}
\etocsettagdepth{mtappendix}{none}

\begin{abstract}
	This work proposes a universal and adaptive second-order method for minimizing second-order smooth, convex functions. Our algorithm achieves $O(\sigma / \sqrt{\nRuns})$ convergence when the oracle feedback is stochastic with variance $\sigma^2$, and improves its convergence to $O( 1 / \nRuns^3)$ with deterministic oracles, where $T$ is the number of iterations. Our method also interpolates these rates without knowing the nature of the oracle apriori, which is enabled by a parameter-free adaptive step-size that is oblivious to the knowledge of smoothness modulus, variance bounds and the diameter of the constrained set. To our knowledge, this is the first universal algorithm with such global guarantees within the second-order optimization literature. 
\end{abstract}

\maketitle
\allowdisplaybreaks		
\acresetall		

\section{Introduction} \label{sec:introduction}

Over the last few decades, first-order (convex) minimization methods have gained popularity for modern machine learning and optimization problems due to their efficient per-iteration cost and \textit{global convergence} properties. 
The literature on first-order methods is rather dense and extensive with a concrete, thorough understanding of the optimal \textit{global} convergence behavior.
Focusing on the more relevant settings of smooth, convex minimization, the lower bounds have been well-established; $O(\sigma / \sqrt{\nRuns})$  when the gradient feedback is noisy with variance $\sigma^2$, and $O(1 / \nRuns^2)$ under deterministic first-order oracles \citep{nemirovskii1983problem, nesterov2003introductory}. 
Under slight variations of the aforementioned problem setting, there exists an extensive amount of work that enjoys the latter, ``accelerated'' rate \citep{nesterov1983acceleration, nesterov1988approach, nesterov2005smooth, tseng2008accelerated, xiao2010dual, lan2012optimal, allenzhu2016linear, levy2018online, wang2018acceleration, diakonikolas2018accelerated, cutkosky2019anytime, kavis2019universal, joulani2020simpler, AVCL+22, liu2022convergence}. 

On the contrary to its first-order analogue, the literature on \textit{global convergence} of {second-order}, smooth methods is notably sparse with many open questions standing even in the simplest problem formulations. 
Following the pioneering works of \citet{bennett1916newton, kantorovich1948functional}, Newton's method and its variations \citep{levenberg1944method, marquardt1963algorithm} are considered as the staple of second-order methods in optimization. 
Although its powerful local convergence behavior has been repeatedly demonstrated \citep{conn2000trust, mishchenko2019stochastic}, studies on its global behavior are relatively limited.
Prior attempts at tackling global convergence mostly make additional structural assumptions on the objective function \citep{polyak2006newton, marteau2019globally, mishchenko2019stochastic} or assume extra regularity conditions on the Hessian \citep{karimireddy2018global} beyond the simplest smooth and convex setting.
Over the last decade, we have witnessed important progress towards a more complete theory of globally-convergent second-order methods (more on this shortly), and yet there remains many important questions unanswered, which we will delve into in this paper.

To motivate the perspective in our technical endeavour, we take a small detour to introduce the idea of \textit{universality}, which we particularly characterize as \textit{adaptation to the level of noise in oracle feedback}. 
Enabled by the recent advances in online optimization, universal first-order algorithms essentially attain the $O(\sigma / \sqrt{\nRuns} + 1 / \nRuns^2)$ convergence for convex minimization problems, interpolating between stochastic and deterministic rates. 
There exist a plethora of algorithms that enjoy this rate under different sets of assumptions for both minimization scenarios (for convex and non-convex settings, we refer the reader to \citep{lan2012optimal, kavis2019universal, ene2021adaptive, joulani2020simpler,AVCL+22} and  \citep{ward2019adagrad, levy2021storm,kavis2022high, liu21}, respectively), and the more general framework of variational inequalities \citep{BL19, APKM+21, VAM21, HAM21, ABM19, HAM22, antonakopoulos2021adaptive}.
However, we observe that such universal results do not exist in second-order literature, hence, it is only natural to ask,

\begin{center}
    \textit{Can we design a simple second-order method that will achieve\\accelerated universal rates beyond $O(\sigma / \sqrt{\nRuns} + 1 / \nRuns^2)$?}
\end{center}

More recently, global sub-linear convergence rates for second-order methods have been characterized by \citep{nesterov2006cubic} for second-order smooth and convex setting. Essentially, the so-called Cubic Regularized Newton's Method combines the quadratic Taylor approximation in the typical Newton update with a cubic regularization term.
At the expense of solving a cubic problem, this method achieves $O(1 / \nRuns^2)$ convergence rate. Shortly after, \citet{nesterov2008accelerating} proposes an accelerated version of the cubic regularization idea with $O(1/\nRuns^3)$ value convergence, pioneering a new direction of research in the study of globally-convergent second-order methods \citep{mishchenko2021regularized}. This idea has been studied further for different settings in convex optimization \citep{jiang2017unified, jiang2020unified} with the same accelerated $O(1/\nRuns^3)$ rate and extended to non-convex realm \citep{cartis2011adaptive1, cartis2011adaptive2}, obtaining the analogous rates of $O(1/\nRuns^{2/3})$ and $O(1/\nRuns^{1/3})$ for finding first-order and second-order stationary points, respectively, leading the way for further investigations \citep{bellavia2020adaptive, dussault2021scalable, chen2022accelerating}.

Notice that accelerated cubic regularization is \textit{sub}-optimal such that recent studies prove a respective lower-bound for second-order smooth, convex problems as $O(1/\nRuns^{7/2})$~\citep{agarwal2018lower, arjevani2019oracle}. The first line of research that shrinks the gap between the upper and lower bounds for achieving an \textit{almost}-optimal (more on this shortly) convergence \citep{nesterov2018lectures} is the so-called ``bisection-type'' methods. Pioneered by \citet{monteiro2013accelerated}, these class of algorithms propose a conceptual method where the step-size of the algorithm \textit{implicitly} depends on the next iterate. To resolve, the authors propose a bisection procedure that simultaneously finds a step-size/next iterate pair that satisfies the conditions of the iterative update, which enables the convergence rate of $O(1/\nRuns^{7/2})$, modulo the complexity of bisection procedure. This idea was very recently generalized for higher-order tensor methods \citep{gasnikov2019optimal}. Not so surprisingly, the same construction finds application in variational inequality (VI) and min-max optimization literature \citep{bullins2020higher, jiang2022generalized}. Very recently and concurrently to our work, \cite{carmon2022optimal} propose the first bisection free acceleration for second-order methods, that achieves the optimal $O(1/\nRuns^{7/2})$. The authors define an \emph{explicit}, deterministic procedure called MS oracle and compute the step-size using a standard line-search procedure enabling them to achieve optimal rates while adaptively computing the step-size without needing to know the smoothness constant.

Although there are promising results with an increasing interest into second-order --and also higher-order-- methods, we identify three main shortcomings in the literature, which we will systematically address in the sequel. First, bisection-type methods achieve the optimal convergence rate however, the search procedure is computationally very prohibitive \citep{nesterov2018lectures, lin2022perseus} and the resulting algorithms are complicated with many interconnected components. 
On the other hand, cubic regularization-based ideas propose a simple construction that achieves acceleration beyond $O(1/\nRuns^2)$ however, similar to previous methods, they either require the knowledge of smoothness constant or need to execute a standard line-search procedure to estimate it locally. A common drawback for both approaches is that the algorithmic constructions are designed for handling \textit{only} deterministic oracles and it is an open question whether such frameworks could immediately accommodate stochastic first and second-order information.
\paragraph{Our contributions:}
To address the aforementioned issues, we developed the first universal and adaptive second-order algorithm, \method, for convex minimization. We summarize our contributions as follows:
\begin{enumerate}
    \item
    We prove \method achieves the global convergence rate of $O(\frac{\sigma_g}{\sqrt{\nRuns}} + \frac{\sigma_H}{\nRuns^{3/2}} + \frac{\smooth \eucdiam^3}{\nRuns^{3}})$ that adapts simultaneously to the variance in the gradient oracle ($\sigma_g$) and Hessian oracle ($\sigma_H$) achieving the first universal convergence result in the literature.
    \item
    Our method is completely oblivious to any problem-dependent parameters including smoothness modulus, variance bounds on stochastic oracles, diameter of the constraint set and any possible bounds on the gradient and Hessian.
    \item
    We design the first adaptive step-size, in the sense of \citep{duchi2011adaptive, rakhlin2013optimization}, that successfully incorporates second-order information ``on-the-fly''.
    While doing so, we bypass any bisection or linesearch procedure, and propose a simple, intuitive algorithmic framework.
\end{enumerate}
From a technical point of view, what will allow us to achieve these results is the combination of three principal ingredients:
\begin{enumerate*}
[(\itshape i\hspace*{1pt})]
\item
proposing appropriate adjustments to Extra-Gradient \citep{korpelevich1976extragradient} that was originally designed for solving variational inequalities and min/max problems;
\item
an ``optimistic'' weighted iterate averaging scheme accompanied by an appropriate gradient rescaling strategy in the spirit of \citep{wang2018acceleration, diakonikolas2018accelerated, kavis2019universal} which allows us to obtain an accelerated rate of convergence by means of a generalized online-to-batch conversion (\thmref{thm:conversion}),
and
\item the glue that holds these elements together is an adaptive learning rate inspired by \citep{rakhlin2013optimization, kavis2019universal, antonakopoulos2021adaptive} which automatically rescales aggregated gradients and second order information. 
\end{enumerate*} 
In what follows, we shall explicate these arguments.
\begin{table}
\label{tbl:survey}
\centering
\caption{A survey on first and second-order algorithms with key properties}
\resizebox{\textwidth}{!}{\begin{tabular}{l|c|c|c|c|c|c|c}
	&\begin{tabular}{@{}c@{}} \textbf{AGD} \\ \citep{nesterov1983acceleration} \end{tabular}
	&\begin{tabular}{@{}c@{}} \textbf{UniXGrad} \\ \citep{kavis2019universal} \end{tabular}
	&\begin{tabular}{@{}c@{}} \textbf{Reg.} \\ \textbf{Newton} \\ \citep{mishchenko2021regularized} \end{tabular}
	&\begin{tabular}{@{}c@{}} \textbf{Accel.} \\ \textbf{Cubic Reg.} \\ \citep{nesterov2008accelerating}\end{tabular}
	&\begin{tabular}{@{}c@{}} \textbf{ANPE\tablefootnote{Note that the bisection procedure is computationally prohibitive, we defer the reader to \citep{nesterov2018lectures}, p.304-305.}} \\ \citep{monteiro2013accelerated}\end{tabular}
	&\begin{tabular}{@{}c@{}} \textbf{OptMS} \\ \citep{carmon2022optimal} \end{tabular}
	&\begin{tabular}{@{}c@{}} \textbf{Extra} \\ \textbf{Newton} \\ \textbf{{[}ours{]}}\end{tabular}
	\\
\hline
\textit{Rate}
	&$\frac{1}{\nRuns^2}$
	&$\frac{\sigma_g}{\sqrt{\nRuns}} + \frac{1}{\nRuns^2}$
	&$\frac{1}{\nRuns^2}$
	&$\frac{1}{\nRuns^3}$
	&$\frac{1}{\nRuns^{7/2}}$
	&$\frac{1}{\nRuns^{7/2}}$
	&$\frac{\sigma_g}{\sqrt{\nRuns}} + \frac{\sigma_H}{\nRuns^{3/2}} + \frac{1}{\nRuns^3}$
	\\
\hline
\textit{Bisection}
	&\color{red}\xmark
	&\color{red}\xmark
	&\color{red}\xmark
	&\color{red}\xmark
	&\color{Green}\cmark
	&\color{red}\xmark
	&\color{red}\xmark
	\\
\hline
\textit{Adapts to $L$}
	&\color{red}\xmark
	&\color{Green}\cmark
	&\color{red}\xmark
	& Partial
	&\color{red}\xmark
	&\color{Green}\cmark
	&\color{Green}\cmark
	\\
\hline
\textit{Noise-adaptive}
	&\color{red}\xmark
	&\color{Green}\cmark
	&\color{red}\xmark
	&\color{red}\xmark
	&\color{red}\xmark
	&\color{red}\xmark
	&\color{Green}\cmark
	\\
\hline
\end{tabular}}
\end{table}

\section{Problem setup} \label{sec:problem-setup}

Throughout the sequel, we will be focusing on solving (constrained) convex minimization problems of the general form:
\begin{equation}
\label{eq:opt}
\tag{Opt}
\begin{aligned}
\textrm{minimize}
	&\quad
	f(\point)
	\\
\textrm{subject to}
	&\quad
\point \in \compact.
\end{aligned}
\end{equation}
Formally, in the above $\points$ is a convex and compact subset of a $d$- dimensional normed space $\mathcal{V}\cong \mathbb{R}^{d}$ with diameter $D = \max_{x, y \in \compact} \norm{x - y}$, and $f:\mathcal{V}\to \mathbb{R}\cup\{ +\infty\}$ is a proper, lower semi-continuous, convex function with $\text{dom} f=\{x \in \R^{\vdim}:f(x)<+\infty\}\subset \compact$. 
To that end, we make a set of blanket assumptions for \eqref{eq:opt}. Following the vast literature of constrained convex minimization \citep{nesterov2006cubicreg, beck2009fast}, we consider ``simple'' constraint sets, i.e.,
\begin{assumption}
The constraint set $\compact$ of \eqref{eq:opt} possesses favorable geometry which facilitates a tractable projection operator.
\end{assumption}
In order to avoid trivialities, we also assume that the said problem admits at least a solution, i.e.
\begin{assumption}
The solution set $\compact^{\ast}=\argmin_{\point \in \compact}f(\point)$ of \eqref{eq:opt} is non-empty.
\end{assumption}
Furthermore, we assume that there exists a Lipschitz continuous selection $\point \mapsto \nabla^{2}\obj(\point)\in \R^{\vdim\times \vdim}$, \ie
\begin{equation}
\tag{H-smooth}
\label{eq:Hess-smooth}
    \|\nabla^{2}f(\point)-\nabla^{2}f(\pointalt)\| \leq \smooth \|\point-\pointalt\| \;\;\forall \point,\pointalt \in \points
\end{equation}
and in addition it satisfies the second order approximation:
\begin{equation}
\tag{Taylor}
\label{eq:Taylor}
    \obj(\point)=\obj(\pointalt)+\braket{\nabla \obj(\pointalt)}{\point-\pointalt}+\frac{1}{2} \ip{\nabla^2 \obj (\pointalt) (\point - \pointalt)}{\point - \pointalt} + O \br{ \norm{\point - \pointalt}^3 }
\end{equation}
To that end, combining \eqref{eq:Hess-smooth} and \eqref{eq:Taylor} we readily get the following inequality:
\begin{equation} \label{eq:second-order-taylor}
    \norm{\nabla \obj(\point)-\nabla \obj(\pointalt)-\nabla^{2} \obj(\pointalt)(\point-\pointalt)}\leq \frac{\smooth}{2}\norm{\point-\pointalt}^{2}
\end{equation}
The above equivalences are well-established and hence we omit their proofs (we defer for a panoramic view to \citep{nesterov2021implementable})

\paragraph{Oracle feedback structure}
From an algorithmic point of view, we aim to solve \eqref{eq:opt} by using methods that require access to a (stochastic) first and second order-oracle. Before we move forward with the methodology, we shall introduce the definitions and notations for this oracle model which we will use in algorithm definitions and technical discussions. 
Let $g(\point, \xi)$ denote the stochastic gradient evaluated at $\point$ with randomness defined by $\xi$ and $H(\point, \xi)$ be the stochastic Hessian at $\point$ with $\xi$ describing the randomness of the oracle, such that
\begin{equation} \label{eq:stochastic-oracle}
\begin{aligned}
    \Expect{ \sgrad(\point, \xi) \mid \point } &= \nabla \obj(\point), \qquad\quad\,\, \Expect{\norm{\sgrad(\point, \xi) - \nabla \obj(\point)}^2 \mid \point } \leq \vargrad^2\\
    \Expect{ \shess(\point, \xi) \mid \point } &= \nabla^2 \obj(\point), \qquad \Expect{ \norm{\shess(\point, \xi) - \nabla^2 \obj(\point)}^2 \mid \point } \leq \varhess^2
\end{aligned}
\end{equation}
Due to space constraints, we will also define an operator that accommodates second-order information and its respective stochastic counterpart.
\begin{equation} \label{eq:second-order-oracle}
\begin{aligned} 
    \F(\point;\pointalt) &= \nabla \obj (\pointalt) + \frac{1}{2}\nabla^2 \obj(\pointalt) (\point - \pointalt)\\
    \Fs(\point;\pointalt, \xi) &= \sgrad (\pointalt, \xi) + \frac{1}{2}\shess(\pointalt, \xi) (\point - \pointalt)\
\end{aligned}
\end{equation}
where $\F$ is essentially the gradient (with respect to $\point$) of the second-order Taylor polynomial. By definition, the operator $\F$ satisfies the second-order smoothness property in Eq.~\eqref{eq:second-order-taylor}

\section{Method} \label{sec:method}

In this section, we shall establish our universal second-order framework. Our presentation evolves around three key components: choosing the appropriate algorithmic template with the key motivations behind it, solving implementability issues that commonly arise in higher-order methods and finally designing a universal algorithm that can handle deterministic and noisy oracle feedback simultaneously without having prior knowledge. 
Our point of departure is the popular Extra-Gradient (EG) template; originally introduced by \citet{korpelevich1976extragradient} and further developed in \citet{nemirovski2004prox},
\begin{equation} \label{eq:EG} \tag{EG}
    \begin{aligned}
        \lead &= \proj{\curr - \curr[\stepalt] \nabla f(x_t)}\\
        \next &= \proj{\curr - \curr[\stepalt] \nabla f(x_{t+1/2})},
    \end{aligned}
\end{equation}
where $\proj{x} = \arg \min_{z \in \mathcal X} \norm{x - z}^2$ is the standard Euclidean projection onto the set $\mathcal X$.
In terms of output, the candidate solution returned by \eqref{eq:EG} after $\nRuns$ iterations is the so-called ``ergodic average''
\begin{equation}
\last[\stateavg]
	= \frac
		{\sum_{\run=\start}^{\nRuns} \curr[\wavg] \lead}
		{\sum_{\run=\start}^{\nRuns} \curr[\wavg]}
\end{equation}
Then, taking $\curr[\wavg] = \curr[\stepalt]$ and assuming the method's step-size $\curr[\stepalt]$ is chosen appropriately, $\last[\stateavg]$ enjoys the following universal guarantee \citep{JNT11,RS13-NIPS}:
\begin{equation}
    \exof{\obj(\overline{\state}_{\run})-\obj(\sol)}=\bigoh\bigg(\frac{1}{\nRuns}+\frac{\sigma}{\sqrt{\nRuns}}\bigg)
\end{equation}
where $\sigma$ signifies the effect of the noisy feedback. 
However, as it becomes apparent, the vanilla \eqref{eq:EG} template is not capable of matching the iconic $1/\nRuns^{2}$ for the smooth deterministic case. 
It is well-established in the literature of smooth, convex minimization that iterate averaging (or momentum in the sense of \citet{nesterov1983acceleration}) is essential for matching the $O(1/T^2)$ lower bounds. In fact, plain uniform averaging is not sufficient; one needs to introduce new iterates with \textit{increasing} weights. Precisely, this is equivalent to computing an average by taking $\curr[\wavg] = O(\run)$. However, we cannot fully characterize the acceleration machinery without what we like to call ``gradient weighting''. On top of (weighted) iterate averaging, gradients must be multiplied by the \textit{same order of weights} to achieve acceleration, which is a recurring theme in the literature of accelerated and universal optimization \citep{tseng2008accelerated, xiao2010dual, lan2012optimal, allenzhu2016linear, levy2018online, wang2018acceleration, cutkosky2019anytime, kavis2019universal, joulani2020simpler}. 

Going back to discussion on \eqref{eq:EG}, \citet{wang2018acceleration} and \citet{kavis2019universal} provide useful insights into acceleration within the context of \eqref{eq:EG}. \citet{wang2018acceleration} identifies a 2-player game with a particular structure called \fenchelgame framework, which essentially reduces to minimizing a smooth, convex function when the players cooperate. By introducing an ``optimistic'' weighted iterate averaging along with a complementary gradient weighting strategy, the framework recovers different acceleration schemes of Nesterov \citep{nesterov1983acceleration, nesterov1988approach, nesterov2005smooth}. On a related front, \citet{diakonikolas2018accelerated} proposes the first acceleration of \eqref{eq:EG} by appropriately integrating the optimistic averaging idea \citep{wang2018acceleration} into the \eqref{eq:EG} template as follows:
\begin{equation}
\label{eq:averages}
\begin{aligned}
\curr[\stateopt]
	&= \frac
		{\curr[\wavg]\curr + \sum_{\runalt=\start}^{\run-1} \iter[\wavg] \iterlead}
		{\sum_{\runalt=\start}^{\run} \iter[\wavg]}, \qquad
\lead[\stateavg]
	= \frac
		{\sum_{\runalt=\start}^{\run} \iter[\wavg] \iterlead}
		{\sum_{\runalt=\start}^{\run} \iter[\wavg]}
\end{aligned}
\end{equation}
where $\curr[\wavg]=O(\run)$ is the ``iterate averaging'' parameter.
Later on, \citet{kavis2019universal} designs an adaptive, universal variant of accelerated Mirror-Prox following the same optimistic averaging idea as in Eq.~\eqref{eq:averages}.
All in all, it is a recurring theme among accelerated algorithms to adopt weighted iterate averaging ($\curr[\wavg] = O(\run)$) with proportionate gradient weighting, and not so surprisingly, prior work establishes clear connections between the degree of weighting and convergence rate. 
\citet{cutkosky2019anytime} designs a black-box reduction that accelerates a class of online algorithms and proves that the rate of convergence of the reduction is $O(1 / \sum_{\run = \start}^{\nRuns} \curr[\wavg])$ for $\curr[\wavg] \in [1, \run]$. 
In retrospect, we aim at answering the following question; 

\begin{center}
\textit{What algorithmic construction would enable acceleration beyond $O(1/\nRuns^2)$?} 
\end{center}

\subsection{Implicit algorithm} \label{sec:implicit-method}

We give a first affirmative answer to the above question by presenting our implicit accelerated algorithm which is constructed upon \eqref{eq:EG}, and establish its convergence properties.
Note that the implicitness of the scheme serves as a gentle introduction to the actual explicit second order acceleration, which shall follow. 
Formally, our scheme is given via the following recursion:
\begin{equation} \label{eq:implicit} \tag{Implicit}
    \begin{aligned}
       \lead &= \proj{\curr - \curr[\stepalt] \curr[\wgrad] \F(\lead[\stateavg]; \curr[\stateopt])} 
       \\
       &= \arg \min_{\point \in \compact} \curr[\wgrad] \ip{\nabla \obj (\curr[\stateopt]) + \frac{1}{2} \nabla^2 \obj (\curr[\stateopt]) (\lead[\stateavg] - \curr[\stateopt]) }{\point - \curr} + \frac{\norm{\point - \curr}^2}{2 \gamma_t}\\[2mm]
            \next &= \proj{\curr - \curr[\stepalt] \curr[\wgrad] \nabla \obj(\lead[\stateavg])}\\
            &= \arg \min_{\point \in \compact} \curr[\wgrad] \ip{\nabla \obj (\lead[\stateavg])}{\point - \curr} + \frac{\norm{\point - \curr}^2}{2 \gamma_t} 
    \end{aligned}
\end{equation}
with $\proj{\point}$ denoting the Euclidean projection of $\point$ onto $\compact$, average sequences $\curr[\stateopt]$ and $\lead[\stateavg]$ defined as in \eqref{eq:averages} and the adaptive step-size $\step_{\run}$ defined as (for some $\stepalt, \stepscale > 0$):
\begin{align} \label{eq:adaptive-stepsize-implicit}
    \curr[\stepalt] = \frac{\step}{\sqrt{ \stepscale + \sum_{\runalt=\start}^{\run-1} \iter[\wgrad]^2 \norm{\nabla \obj(\iterlead[\stateavg]) - \F(\iterlead[\stateavg]; \iter[\stateopt])}^2 }}.
\end{align}

The implicit nature of \eqref{eq:implicit} originates from $\state_{\run+1/2}$ update (which we shall refer to as (corrected) extrapolation step at times) since $\lead[\stateavg]$ depends upon $\lead$ itself. 
However, this scheme exhibits several key differences from the vanilla \eqref{eq:EG}, which constitute the fundamental parts of our second-order acceleration machinery. In particular, we have:
\begin{enumerate}[label=(\roman*)]
    \item integration of second-order updates for sharper extrapolation steps - first step of acceleration.
    \item interplay between averaging ($\curr[\wavg]$) and gradient weighting ($\curr[\wgrad]$) which allows more aggressive averaging - second step of acceleration.
    \item adaptive step-size in the sense of \citet{rakhlin2013optimization} - key to adaptivity and universality.
\end{enumerate}
\paragraph{Second-order updates:}
First, we will consider the particular interpretation of \eqref{eq:EG} as an approximation to the Proximal Point method~\citep{rockafellar1976monotone} which serves as motivation for the accommodation of second-order information in our scheme.
\begin{equation} \label{eq:PP} \tag{PP}
    \begin{aligned}
        \next &= \curr - \curr[\stepalt] \nabla \obj (\next).
    \end{aligned}
\end{equation}
In particular, \eqref{eq:EG} tries to approximate $\next$ by generating the extrapolated point $\lead$, and make use of the gradient at $\lead$ to take a step from $\curr$ to $\next$. Therefore, if the algorithm is able to compute a sharper estimate in the extrapolation step, it should be able live up to the fame of \eqref{eq:PP} and display faster convergence. To this end, we augment the extrapolation step by introducing second-order term. 
Essentially, our algorithm makes use of \textit{second-order Taylor approximation}, as opposed to first-order expansion, only for the extrapolation step, trading-off sharper approximation with second-order information. 

\paragraph{Iterate averaging and gradient weighting:}
Now, we turn our attention to the second component in our acceleration machinery; averaging and weighting. Recall that the acceleration framework of \citet{cutkosky2019anytime} guarantees a value convergence rate of $O(1/t^{p+1})$ when weighting factor satisfies $b_t = O(t^p)$ with $p \in [0,1]$. We take this result one step beyond in two fronts; our algorithm exploits higher-order smoothness in order to extend this bound for $p \in [0,2]$, implying the accelerated rate of $O(1/\nRuns^3)$. Second, we observe that previous work restricts the choice of gradient weights and averaging weights by taking $\curr[\wgrad] \approx \curr[\wavg]$. We decouple those weights by allowing the sequences $\curr[\wgrad]$ and $\curr[\wavg]$ to be \emph{different}, which in turn equips us with more aggressive iterate averaging when necessary.

\paragraph{Adaptive step-size:}
As the final component, we study the adaptive step-size \eqref{eq:adaptive-stepsize-implicit} from the parameter adaptation perspective (\ie adaptation to the Lipschitz modulus) and expand on its universal properties in the next section. 
The vast literature on adaptive methods predominantly rely on constructions of AdaGrad-like decreasing step-size policies by accumulating the observed gradient norms in its denominator. 
The intuition behind this choice is that whenever the method approaches a solution, the vanishing gradients bring about stabilization, ensuring progress around the solution's neighborhood.
However, this idea fails
for (compactly) constrained problems; when the solution lies on the boundary.
So inspired by \cite{rakhlin2013optimization}, we design a constraint-aware step-size by accumulating $\norm{ \nabla \obj(\lead[\stateavg]) - \F(\lead[\stateavg]; \curr[\stateopt]) }^2$ which converges to $0$ as $\lead[\stateavg] - \curr[\stateopt] \to 0$; which in turn implies convergence of the algorithm. To our knowledge, this is the first adaptive step-size that accommodates second order information.


Having established the core components of our design, we are in position to present the first accelerated convergence rate guarantee for \eqref{eq:implicit}. Formally, this is given by the following.

\begin{theorem} \label{thm:implicit}
    Let $\{ \lead \}_{\run = \start}^{\nRuns}$ be generated by \eqref{eq:implicit} run with the adaptive step-size policy \eqref{eq:adaptive-stepsize-implicit} where $\curr[\wgrad]=\run^{2}$, $\curr[\wavg]=\run^{p}$ with $p \geq 2$. Assume that $\obj$ satisfies \eqref{eq:Hess-smooth} then, it is ensured that:
    \begin{align*}
        \obj (\lastlead[\stateavg]) - \obj(\sol) \leq O \br{ \frac{ \max \bc{ \sqrt{\stepscale} \frac{\eucdiam^2}{\stepalt}, L \frac{\eucdiam^4 + \eucdiam \stepalt^3}{\stepalt} } }{\nRuns^3} }
    \end{align*}
    When $\stepalt = D$, we obtain the converge rate $O \br{ \frac{ \max \bc{\smooth \eucdiam^3, \sqrt{\stepscale} D }}{\nRuns^3}}$.
\end{theorem}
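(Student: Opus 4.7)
The plan is to obtain the $O(1/\nRuns^3)$ rate by combining three ingredients: an Extra-Gradient style descent lemma adapted to the second-order operator $\F$, the generalized online-to-batch conversion of Theorem \ref{thm:conversion}, and a telescoping argument for the adaptive step-size \eqref{eq:adaptive-stepsize-implicit} that is closed up using the self-bounding property of the Taylor remainder $e_\run := \nabla \obj(\lead[\stateavg]) - \F(\lead[\stateavg];\curr[\stateopt])$ coming from \eqref{eq:second-order-taylor}. First I would start from the two proximal optimality conditions defining the updates in \eqref{eq:implicit}: applying the first one at the test point $\next$ and the second at an arbitrary $u\in\compact$, a standard Mirror-Prox manipulation gives $\gamma_\run b_\run\langle \F(\lead[\stateavg];\curr[\stateopt]),\lead-u\rangle \leq \tfrac{1}{2}\|\curr-u\|^2 -\tfrac{1}{2}\|\next-u\|^2 -\tfrac{1}{2}\|\lead-\curr\|^2 -\tfrac{1}{2}\|\next-\lead\|^2$. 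Adding and subtracting $\nabla\obj(\lead[\stateavg])$ inside the inner product and absorbing the mixed term via Young's inequality against $\tfrac{1}{2}\|\next-\lead\|^2$ yields
\[
b_\run\langle\nabla\obj(\lead[\stateavg]),\lead-u\rangle \leq \tfrac{1}{2\gamma_\run}\bigl(\|\curr-u\|^2-\|\next-u\|^2\bigr) -\tfrac{1}{2\gamma_\run}\|\lead-\curr\|^2 + \tfrac{\gamma_\run b_\run^2}{2}\|e_\run\|^2 .
\]

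Next I would sum from $\run=1$ to $\nRuns$ at $u=\sol$: monotonicity of $\gamma_\run$ combined with $\|\sol-\curr\|\leq\eucdiam$ produces, via Abel summation, a telescoping envelope of the form $\eucdiam^2/(2\gamma_\nRuns)$ up to a $\sqrt{\beta_0}$ correction, while the AdaGrad-style lemma $\sum_\run (S_\run-S_{\run-1})/\sqrt{\beta_0+S_{\run-1}}\leq 2\sqrt{\beta_0+S_\nRuns}$ applied with $S_\run = \sum_{s\leq \run} b_s^2\|e_s\|^2$ gives $\sum_\run \gamma_\run b_\run^2 \|e_\run\|^2 \leq 2\eta\sqrt{\beta_0+S_\nRuns}$. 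At this point I would invoke Theorem \ref{thm:conversion} on the left-hand side: using the definitions \eqref{eq:averages} together with convexity of $\obj$, the weighted inner-product sum dominates the value gap as $\sum_\run a_\run \langle \nabla\obj(\lead[\stateavg]),\lead-\sol\rangle \geq \wavgsum_\nRuns \bigl(\obj(\lastlead[\stateavg])-\obj(\sol)\bigr)$ with $\wavgsum_\nRuns = \sum_\run a_\run$.

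The main obstacle is then to close the loop and show that $S_\nRuns$ is bounded independently of $\nRuns$. Here the second-order Taylor estimate \eqref{eq:second-order-taylor} gives $\|e_\run\|\leq (L/2)\|\lead[\stateavg]-\curr[\stateopt]\|^2$, and the two-line computation using \eqref{eq:averages} produces the key identity $\lead[\stateavg]-\curr[\stateopt] = (a_\run/\wavgsum_\run)(\lead-\curr)$. With $b_\run=\run^2$, $a_\run=\run^p$ and $p\geq 2$ one has $\wavgsum_\run\asymp \run^{p+1}$, so $b_\run a_\run^2/\wavgsum_\run^2 = O(1)$ and hence $b_\run^2\|e_\run\|^2 \lesssim L^2 \|\lead-\curr\|^4 \leq L^2 \eucdiam^2 \|\lead-\curr\|^2$. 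The delicate step---and the part I expect to require the most care---is absorbing $2\eta\sqrt{\beta_0+S_\nRuns}$ against the negative terms $\sum_\run \tfrac{1}{2\gamma_\run}\|\lead-\curr\|^2$ from the descent lemma: substituting the bound on $S_\nRuns$, moving the $\sqrt{\cdot}$ to the other side by squaring, and using $ab\leq (a^2+b^2)/2$ to trade the quartic against the quadratic in $\|\lead-\curr\|$, all at the cost of the extra additive terms $L\eucdiam^4/\eta$ and $L\eucdiam\eta^2$. Collecting constants yields $\wavgsum_\nRuns\bigl(\obj(\lastlead[\stateavg])-\obj(\sol)\bigr) \leq O\bigl(\max\{\sqrt{\beta_0}\eucdiam^2/\eta,\, L(\eucdiam^4+\eucdiam\eta^3)/\eta\}\bigr)$, and dividing by $\wavgsum_\nRuns \gtrsim \nRuns^3$ yields the claimed rate; specialising to $\eta=\eucdiam$ produces the simplified form in the statement.
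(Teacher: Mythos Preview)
Your overall architecture is right---descent lemma for the two proximal steps, Young's inequality to isolate the Taylor remainder $e_\run = \nabla\obj(\lead[\stateavg]) - \F(\lead[\stateavg];\curr[\stateopt])$, AdaGrad-type control of $\sum_\run \gamma_\run a_\run^2\|e_\run\|^2$, and the conversion Theorem~\ref{thm:conversion}. You also correctly spot the key identity $\lead[\stateavg]-\curr[\stateopt] = (\curr[\wavg]/\curr[\wavgsum])(\lead-\curr)$ and the quartic-to-quadratic trick via $\|\lead-\curr\|\le\eucdiam$. Two points, however, need repair.

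First, the AdaGrad inequality you state, $\sum_\run (S_\run-S_{\run-1})/\sqrt{\beta_0+S_{\run-1}} \le 2\sqrt{\beta_0+S_\nRuns}$, is \emph{false} with the lagged denominator: take $\beta_0=1$, $S_1=M$, and the first term is already $M$ while the right side is $O(\sqrt{M})$. This is why the paper applies Young's inequality with $\gamma_{\run+1}$ rather than $\gamma_\run$, at the price of a telescoping correction $(\tfrac{1}{\gamma_{\run+1}}-\tfrac{1}{\gamma_\run})(\|\lead-\next\|^2+\|\lead-\curr\|^2)$ that is absorbed into the $\eucdiam^2/\gamma_{T+1}$ envelope. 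Only then does the correct form $\sum_\run (S_\run-S_{\run-1})/\sqrt{\beta_0+S_\run}\le 2\sqrt{\beta_0+S_\nRuns}$ apply.

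Second, and more substantively, your ``delicate step'' is not carried out, and the sketch you give (``move the $\sqrt{\cdot}$ to the other side by squaring'') is not how the paper closes the argument. The paper does \emph{not} try to absorb $\sqrt{\beta_0+S_\nRuns}$ against $\sum_\run\tfrac{1}{2\gamma_\run}\|\lead-\curr\|^2$ directly. Instead it converts the negative term back into $e_\run$-form---using exactly your identity plus $\|\lead-\curr\|^2\ge\|\lead-\curr\|^4/\eucdiam^2$ and the Taylor bound in reverse---to obtain $-\tfrac{1}{\gamma_{\run+1}}\|\lead-\curr\|^2 \le -\tfrac{4c^4}{L^2\eucdiam^2\gamma_{\run+1}}\,a_\run^2\|e_\run\|^2$. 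The positive and negative contributions then have the \emph{same} summand $\gamma_{\run+1}a_\run^2\|e_\run\|^2$, with combined coefficient $\bigl(\tfrac{3\eucdiam^2+\gamma^2}{\gamma^2}-\tfrac{4c^4}{L^2\eucdiam^2\gamma_{\run+1}^2}\bigr)$. Since $\gamma_{\run+1}$ is non-increasing, this coefficient eventually turns negative; defining $T_0$ as the last time it is nonnegative, all terms with $\run>T_0$ are dropped, the remaining sum is at most $C\sum_{\run\le T_0}\gamma_{\run+1}a_\run^2\|e_\run\|^2 \le 2C\gamma/\gamma_{T_0+1}$, and the definition of $T_0$ gives $1/\gamma_{T_0+1}\le O(L\eucdiam\sqrt{\eucdiam^2+\gamma^2}/\gamma)$. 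This threshold argument is what actually produces the constants $L(\eucdiam^4+\eucdiam\gamma^3)/\gamma$ you arrive at, and it replaces the vague squaring-and-Young manoeuvre in your sketch. (A side remark: you have swapped the roles of $a_\run$ and $b_\run$ relative to the paper, and in your conversion sentence the regret is indexed by $a_\run$ while your descent lemma carries $b_\run$; this should be made consistent.)
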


\begin{remark}
We emphasize that the above rate \emph{does not} require any prior knowledge of problem paramaters such as $\smooth$, $\eucdiam$, time-horizon $\nRuns$ and any bounds on gradient/Hessian norms. In order to have better dependence on $\eucdiam$ one could set $\stepalt = \eucdiam$, and our rate of $O(1/T^3)$ holds irrespective of $\stepalt$.
\end{remark}


\if 0
With the aim of achieving improved \emph{global} rates without restrictive structural or regularity assumptions, second-order methods, which has recently experienced growing recognition. We will take a closer look at two lines of work which inspired us in the development of our universal scheme.

First, the pioneering work of \citet{nesterov2006cubic, nesterov2008accelerating} proposes a new class of algorithms for solving second-order smooth, convex problems in $O(1/T^3)$ iterations, globally. 
The main strategy is to minimize second-order \emph{cubic} majorizer instead of solving \textit{quadratically-regularized} first-order sub-problems. 
Nevertheless, solving the cubic problems might come at an additional costs.

Second, there have been several attempts at incorporating second-order information into Extra-Gradient and Mirror-Prox frameworks within the context of convex minimization and convex-concave saddle point problems \citep{monteiro2013accelerated, jiang2022generalized, bullins2020higher}. The proposed methods share two main highlights; first, the employment of quadratic regularization as opposed to Nesterov's cubic sub-problems, and second, construction of an implicit update that couples the step-size and the oracle call at the \emph{next iterate}. It is necessary to execute a bisection/line-search procedure that simultaneously updates the step-size and the next iterate, which results in complex algorithm with many moving parts.

Our strategy finds a compromise between the two approaches; solving relatively easier sub-problems with quadratic regularization while designing an explicit second-order update by decoupling the step-size and the choice of next iterate. This is where our adaptive step-size construction comes into play. We decouple the step-size $\gamma_t$ from the oracle feedback at iteration $t$, \emph{a priori} by accumulating $\norm{ \nabla f(\bar x_{i+1/2}) - \F(\bar x_{i+1/2}; \tilde x_i) }^2$ up to and including $i = t-1$. We will put across the details of the transition from implicit to explicit in \secref{sec:explicit-method}, which might come as a surprise how smooth this transition will be.
\fi

\if 0
We adopt a particular perspective to second-order methods such that we aim at providing global convergence results with as little additional assumption as possible. To exemplify, Newton's method \citep{} has very powerful local convergence properties \citep{}, however, any attempt at characterizing its global behavior requires restrictive structural assumptions which defeats the purpose of simplicity and applicability. Moreover, one needs to address the efficiency problems due to full Hessian inversions.

An alternative perspective into second-order methods is the seminal work of cubic regularization \citep{nesterov2006cubic}. The authors propose to minimize a second-order \emph{cubic} majorizer instead of the first-order \emph{quadratic} sub-problem for choosing the next iterate. It is shown that this construction is able to achieve $O(1/T^3)$ convergence rate while simultaneously avoiding Hessian inversions. 
\fi

\if 0
Following the same notation in \algref{alg:implicit-euclidean}, such algorithms compute gradients at weighted averages as in $\bar x_t = (1 / B_t)\sum_{i=1}^{t} b_i x_i$, and take a gradient step by multiplying the gradient (equivalently step-size) with weights of the \emph{same order}: $x_{t+1} = x_t - \gamma_t b_t \nabla f(\bar x_t)$. Accelerated convergence of $O(1/T^2)$ is achieved under first-order smoothness when the weights satisfy $b_t = O(t)$. 

Subsequently, the method's oracle returns $\curr[\signal] \gets \orcl(\curr[\stateavg];\curr[\sample])$ and $\lead[\signal] \gets \orcl(\lead[\stateavg];\lead[\sample])$ and, inspired by an idea of \cite{RS13-COLT,RS13-NIPS}, this information is used to update the method's step-size as
\begin{equation}
\label{eq:step-unix}
\curr[\stepalt]
	= \frac
		{D}
		{\sqrt{1 + \sum_{\runalt=1}^{\run-1} \iter[\step]^{2} \dnorm{\iter[\signal] - \half{\signal}{\runalt}}^{2}}}
\end{equation}
where $D=\max_{\point,\pointalt \in \points}$

\footnote{The idea for adaptive step-size choices of this type dates back at least to \citet{RS13-COLT,RS13-NIPS} and is based on the ansatz that $\curr[\stepalt]$ is significantly larger in the smooth deterministic case relative to any other regime.}
\fi

\if 0
\begin{subequations}
\label{eq:rate-unix}
\begin{enumerate}[\itshape a\upshape)]
\item
If $\obj$ satisfies \eqref{eq:BG}, then
\begin{align}
\label{eq:rate-unix-BG}
\hspace{-1.7em}
\exof{\obj(\lastlead[\stateavg]) - \min\obj}
	&= \bigof[\bigg]{\frac{\bregdiam\sqrt{\gbound^{2} + \sdev^{2}}}{\sqrt{\hstr\nRuns}}}
\shortintertext{%
\item
If $\obj$ satisfies \eqref{eq:LG}, then}
\label{eq:rate-unix-LG}
\hspace{-1.7em}
\exof{\obj(\lastlead[\stateavg]) - \min\obj}
	&= \bigof*{\frac{\bregdiam^{2}\lips}{\hstr\nRuns^{2}} + \frac{\bregdiam \sdev}{\sqrt{\hstr\nRuns}}}
\end{align}
\end{enumerate}
\end{subequations}
\fi

\subsection{Explicit algorithm} \label{sec:explicit-method}

Despite the fact that \eqref{eq:implicit} improves upon the accelerated rate of $O(1/\nRuns^2)$, one may easily observe that it exhibits the following drawbacks: 
\begin{enumerate}
\item
\eqref{eq:implicit} is a conceptual algorithm and therefore, \emph{not} implementable in practice.
\item
A fortiori, it cannot provide rate interpolation guarantees as it does not have the machinery to simultaneously cope with deterministic and stochastic feedback. 
\end{enumerate}
As discussed earlier, a common strategy for overcoming this implicit construction 
is using a bisection/line-search procedure \citep{jiang2022generalized, monteiro2013accelerated, bullins2020higher}. 
Depending on the context, this procedure serves two \emph{distinct} purposes. 
Primarily, it tackles the implicit nature of the update rule by simultaneously finding a pair of ($\curr[\stepalt]$, $\lead$) and 
secondly, it enables adaptation to the second-order smoothness. 
However, one may identify major setbacks with these approaches; first, it is not clear how to handle stochastic oracles for executing the search procedure, so it is not capable of satisfying any universal guarantees.
Moreover, it yields a rather complicated procedure as a byproduct that has many moving parts.
To that end, we propose an alternative approach which not only yields a simple scheme, but also provides a universal algorithm that is able to handle noisy feedback on-the-fly. Without further ado, we display our explicit algorithm, \method, with appropriate modifications.
Having defined our main scheme, \algref{alg:explicit}, we will provide a more detailed description of its components. 
\begin{algorithm}[H]
	\caption{\method} \label{alg:explicit}
	\textbf{Input}: $\init[\state] \in \compact$, $\,\curr[\wgrad] = t^2$ and $A_t= \sum_{s=1}^{t} \iter[\wgrad]$, $\,\curr[\wavg] = t^p$ ($p\geq2$) and $B_t= \sum_{s=1}^{t} \iter[\wavg]$, $\,\gamma > 0$, $\xi_t \sim$ i.i.d.\\[-3mm]
	\begin{algorithmic}[1]
    	\FOR {$t=1$ to $T$}
                \smallskip
                \STATE $\quad\,\,\,\curr[\stepalt] = \dfrac{\step}{\sqrt{ \stepscale + \sum_{\runalt=\start}^{\run-1} \iter[\wgrad]^2 \norm{\sgrad(\iterlead[\stateavg], \xi_{s + \frac{1}{2}}) - \Fs(\iterlead[\stateavg]; \iter[\stateopt], \xi_s)}^2 }}$ \label{eq:adaptive-stepsize-explicit}
     		\smallskip
    		\STATE $\lead = \argmin_{\point \in \compact} \ip{ \curr[\wgrad] \sgrad (\curr[\stateopt], \xi_t) }{ \point } + \frac{\curr[\wgrad] \curr[\wavg] }{2 \curr[\wavgsum]} \ip{\shess ( \curr[\stateopt] , \xi_t) ( \point - \curr ) }{\point - \curr} + \frac{1}{2 \curr[\stepalt]} \norm{ \point - \curr }^2$
    		\STATE $\, \next= \argmin_{\point \in \compact} \ip{ \curr[\wgrad] \sgrad (\lead[\stateavg], \xi_{t+\frac{1}{2}}) }{ \point} + \frac{1}{2 \curr[\stepalt]} \norm{\point - \curr}^2$
    		\smallskip
    	\ENDFOR
    \end{algorithmic}
\end{algorithm}
\paragraph{Universal step-size} We modify our step-size (see Eq.~\eqref{eq:adaptive-stepsize-explicit}) in order to operate in the stochastic regime while making it noise-adaptive for rate interpolation. Using the same weighted averaging scheme in Eq.~\eqref{eq:averages}, we define the universal counterpart of the adaptive step-size,
Note that $\curr[\stepalt]$ is independent of any variable/randomness generated at iteration $\run$; it accumulates $\curr[\wgrad]^2 \norm{ \sgrad(\iterlead[\stateavg], \iterlead[\xi]) - \Fs(\iterlead[\stateavg]; \iter[\stateopt], \iter[\xi]) }^2$ up to $\run-1$. Therefore, the step-size is decoupled from the explicit update, \textit{a priori}.

Now, what remains is a new algorithmic design that will retain the accelerated convergence properties demonstrated by \eqref{eq:implicit} while having an explicit construction that is capable of automatically adjusting to noise level in the oracle feedback.
Before expanding upon the technical details of our strategy, let us take our time to explain the consequences of our explicit design compared to \eqref{eq:implicit}.

\paragraph{From implicit to explicit}
To obtain the explicit algorithm,
\begin{enumerate*}
[(\itshape i\hspace*{1pt})]
\item
we write the projection sub-problem in the $\argmin$ form;
\item
introduce \textit{stochastic} oracle feedback;
\item
for the second-order term, replace $\lead$ in $\lead[\stateavg]$ with the free variable $\point$;
then,
\item 
simplify as follows:
\end{enumerate*}
\begin{gather*}
    \frac{\curr[\wgrad]}{2} \langle \shess (\curr[\stateopt], \xi_t)( \lead[\stateavg] - \curr[\stateopt] ), \point - \curr \rangle\\
    \Downarrow\\
    \frac{\curr[\wgrad]}{2 } \Big\langle \shess (\curr[\stateopt], \xi_t) \Big( \frac{ \curr[\wavg] \lead + \sum_{\runalt=\start}^{\run-1} \iter[\wavg] \iterlead }{\curr[\wavgsum]} - \frac{ \curr[\wavg] \curr + \sum_{\runalt=\start}^{\run-1} \iter[\wavg] \iterlead }{\curr[\wavgsum]} \Big),\point - \curr \Big\rangle\\
    \Downarrow\\
    \frac{\curr[\wgrad]}{2 } \Big\langle \shess (\curr[\stateopt], \xi_t) \Big( \frac{ \curr[\wavg] \point + \sum_{\runalt=\start}^{\run-1} \iter[\wavg] \iterlead }{\curr[\wavgsum]} - \frac{ \curr[\wavg] \curr + \sum_{\runalt=\start}^{\run-1} \iter[\wavg] \iterlead }{\curr[\wavgsum]} \Big),\point - \curr \Big\rangle\\
    \Downarrow\\
    \frac{\curr[\wgrad] \curr[\wavg]}{2 \curr[\wavgsum]} \ip{ \shess (\curr[\stateopt], \xi_t)( \point - \curr ) }{ \point - \curr }
\end{gather*}
Given the bisection-type conceptual methods \citep{monteiro2013accelerated, jiang2022generalized, bullins2020higher}, it is surprising how smoothly we could transition from implicit to explicit \textit{once} we decouple the step-size from the current iteration \textit{apriori}. Moreover, the resulting update rule for the extrapolation step retains the quadratic structure as the $\next$ update rule. 
Having analyzed the components of the explicit scheme, we will first present the universal convergence rates then provide a concise explanation of the proof strategy with particular emphasis on the principal components of the analysis.
\begin{theorem} \label{thm:explicit}
     Let $\{ \lead \}_{\run = \start}^{\nRuns}$ be a sequence generated by \algref{alg:explicit}, run with the adaptive step-size policy \eqref{eq:adaptive-stepsize-explicit} and $\curr[\wgrad]=\run^{2},\curr[\wavg]=\run^{p}$ with $p \geq 2$. Assume that $\obj$ satisfies \eqref{eq:Hess-smooth}, and that Assumptions~\eqref{eq:stochastic-oracle} hold. Then, the following universal guarantee holds:
    \begin{align*}
        \obj (\lastlead[\stateavg]) - \obj (\sol[\point]) \leq O \br{ \frac{ { \frac{\eucdiam^2 + \stepalt^2}{\gamma} } \vargrad }{\sqrt{\nRuns}} + \frac{ { \frac{\eucdiam^3 + \eucdiam \stepalt^2}{\gamma} } \varhess }{\nRuns^{3/2}} + \frac{ \max \bc{ L {\frac{ \eucdiam^4 + \eucdiam \stepalt^3}{\gamma} }, \sqrt{\stepscale} { \frac{\eucdiam^2 + \stepalt^2}{\stepalt} } } }{\nRuns^3} }
    \end{align*}
    When $\stepalt = \eucdiam$, we obtain the target rate $O \br{ \frac{D \sigma_g}{\sqrt{\nRuns}} + \frac{D^2 \sigma_H}{\nRuns^{3/2}} + \frac{ \max \bc{\smooth \eucdiam^3, \sqrt{\stepscale} \eucdiam } }{\nRuns^3} }$.
\end{theorem}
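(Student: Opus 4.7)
The plan is to mirror the analysis used for \eqref{eq:implicit} in \thmref{thm:implicit}, but carefully track the effect of replacing the deterministic oracles with stochastic ones, both inside the two update steps \emph{and} inside the adaptive step-size. The whole argument hinges on four layers: (a) a standard extra-gradient descent lemma, (b) a generalized online-to-batch conversion (\thmref{thm:conversion}) that turns weighted regret into value convergence, (c) a self-bounding trick to close the loop between the step-size and the residual $\norm{\sgrad(\iterlead[\stateavg],\xi_{s+\frac12})-\Fs(\iterlead[\stateavg];\iter[\stateopt],\xi_s)}^{2}$, and (d) martingale/variance bookkeeping for the stochastic deviations.

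First, I would write the two projection sub-problems in first-order optimality form and combine them with the three-point identity for the Euclidean prox to get the per-iteration inequality
\begin{equation*}
\curr[\wgrad]\ip{\nabla\obj(\lead[\stateavg])}{\lead[\stateavg]-\sol} \le \tfrac{1}{2\curr[\stepalt]}\bigl(\norm{\curr-\sol}^{2}-\norm{\next-\sol}^{2}\bigr) -\tfrac{1}{2\curr[\stepalt]}\norm{\lead-\curr}^{2}+\curr[\wgrad]\ip{\nabla\obj(\lead[\stateavg])-\Fs(\lead;\curr[\stateopt],\xi_t)}{\lead[\stateavg]-\lead} + \text{noise terms},
\end{equation*}
where the explicit/implicit discrepancy is absorbed into the second-order term via the identity established in the ``From implicit to explicit'' derivation. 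The residual in the inner product is then split as $\nabla\obj(\lead[\stateavg])-\F(\lead[\stateavg];\curr[\stateopt])$ (deterministic Taylor error, bounded by $\tfrac{L}{2}\norm{\lead[\stateavg]-\curr[\stateopt]}^{2}$ via \eqref{eq:second-order-taylor}) plus two stochastic noise pieces, each controlled in conditional expectation by $\vargrad$ and $\varhess$ using \eqref{eq:stochastic-oracle}.

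Next, I would sum over $t=1,\dots,\nRuns$, apply \thmref{thm:conversion} to convert the left-hand side into $\wavgsum_{\nRuns}\bigl(\obj(\lastlead[\stateavg])-\obj(\sol)\bigr)$, and use Young's inequality together with the $-\tfrac{1}{2\curr[\stepalt]}\norm{\lead-\curr}^{2}$ negative term to absorb the deterministic part of the residual. This is where the ``constraint-aware'' accumulator in \eqref{eq:adaptive-stepsize-explicit} pays off: the adaptive denominator grows exactly like $\sqrt{\sum_s \iter[\wgrad]^{2}\norm{\iterlead[\stateavg]-\iter[\stateopt]}^{4}}$ (plus stochastic inflation), and pairing this with the negative quadratic via the standard adaptive-method lemma $\sum_s x_s/\sqrt{\stepscale+\sum_{r\le s}x_r}\le 2\sqrt{\stepscale+\sum_s x_s}$ gives a self-bounding inequality of the form $U \le C_1\sqrt{\stepscale}+C_2 L\eucdiam\sqrt{U}$. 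Resolving this quadratic inequality yields a deterministic contribution of order $\sqrt{\stepscale}\,\eucdiam^{2}/\step + L(\eucdiam^{4}+\eucdiam\step^{3})/\step$, which, after dividing by $\wavgsum_{\nRuns}=\Theta(\nRuns^{p+1})$ with $p\ge 2$, produces the $1/\nRuns^{3}$ term of the bound.

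Finally, for the stochastic contributions I would take expectations, use tower property and the martingale structure (the oracle randomness $\xi_t,\xi_{t+\frac12}$ is independent of $\curr,\curr[\stateopt],\lead[\stateavg]$) to kill the cross terms, and bound $\Expect{\norm{\sgrad-\nabla\obj}^{2}}\le\vargrad^{2}$, $\Expect{\norm{\shess-\nabla^{2}\obj}^{2}}\le\varhess^{2}$. The gradient noise enters with weight $\curr[\wgrad]=\run^{2}$, summing to $\sqrt{\sum_t t^{4}}=\Theta(\nRuns^{5/2})$, which after division by $\wavgsum_{\nRuns}=\Theta(\nRuns^{p+1})$ with $p=2$ gives $\vargrad\eucdiam/\sqrt{\nRuns}$; similarly the Hessian noise carries an extra factor $\norm{\lead[\stateavg]-\curr[\stateopt]}\le\eucdiam$ and the weight $\curr[\wgrad]\curr[\wavg]/\wavgsum_{\nRuns}$, producing $\varhess\eucdiam^{2}/\nRuns^{3/2}$. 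The main technical obstacle is precisely this last stage: the step-size $\curr[\stepalt]$ is itself a random variable depending on past stochastic residuals, so one cannot pull $1/\curr[\stepalt]$ out of expectations naively; this is handled by keeping the bound on $U$ purely pathwise until the last step and then using Jensen to pass from $\Expect{\sqrt{\cdot}}$ to $\sqrt{\Expect{\cdot}}$ on the variance accumulators, exactly as in the UnixGrad-style analysis of \citet{kavis2019universal}. Combining the three contributions and optimizing $\step=\eucdiam$ yields the stated $O\!\bigl(D\vargrad/\sqrt{\nRuns}+D^{2}\varhess/\nRuns^{3/2}+(L\eucdiam^{3}+\sqrt{\stepscale}\eucdiam)/\nRuns^{3}\bigr)$ rate.
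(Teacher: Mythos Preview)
Your overall four-layer plan---template descent inequality, online-to-batch conversion via \thmref{thm:conversion}, self-bounding for the adaptive step-size, martingale/Jensen bookkeeping for the noise---is exactly the skeleton of the paper's proof. However, the specific self-bounding mechanism you describe has a genuine gap that the paper resolves differently.

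The issue is the mismatch between the \emph{stochastic} and \emph{deterministic} residuals. Write
\[
\nabla_t = \nabla\obj(\lead[\stateavg]) - \F(\lead[\stateavg];\curr[\stateopt]),
\qquad
\tnabla_t = \sgrad(\lead[\stateavg],\lead[\xi]) - \Fs(\lead[\stateavg];\curr[\stateopt],\curr[\xi]).
\]
The adaptive denominator of $\curr[\stepalt]$ and the positive residual term in the template inequality both accumulate $a_s^2\|\tnabla_s\|^2$, while the negative term $-\tfrac{1}{\next[\stepalt]}\|\lead-\curr\|^2$, after the diameter-and-smoothness chain, yields $-\tfrac{c}{L^2D^2\next[\stepalt]}a_t^2\|\nabla_t\|^2$, i.e.\ the \emph{deterministic} residual. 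So your proposed inequality $U \le C_1\sqrt{\stepscale}+C_2LD\sqrt{U}$ has two different $U$'s on the two sides: the $\sqrt{U}$ on the right comes from the stochastic accumulator, whereas the quantity you can actually subtract on the left involves the deterministic one. Closing this loop is precisely the nontrivial step, and ``plus stochastic inflation'' does not by itself deliver it, because the step-size sitting in the denominator of the negative term is also the stochastic one.

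The paper's resolution (following the UnixGrad idea you allude to but do not spell out) is to pass to $m_t := \min\{\|\nabla_t\|^2,\|\tnabla_t\|^2\}$: trivially $\|\nabla_t\|^2 \ge m_t$ and $\|\tnabla_t\|^2 \ge m_t$, and one shows $\|\tnabla_t\|^2 \le 2m_t + 4\|\delta_t(\lead[\stateavg]-\curr[\stateopt])\|^2 + 4\|\epsilon_t\|^2$ where $\delta_t,\epsilon_t$ are the Hessian and gradient noise. This allows (i) constructing a surrogate $\lambda_t \ge \gamma_t$ built from the $m_t$'s, so that the negative term can be lower-bounded using $\lambda_{t+1}$, and (ii) upper-bounding the positive square root by $\sqrt{\sum a_t^2 m_t}$ plus separate noise sums. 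With both sides now in the common currency $m_t$, the paper does \emph{not} solve a quadratic inequality; instead it uses a $T_0$-cutoff argument: the coefficient $\tfrac{3D^2+2\gamma^2}{\sqrt{2}\gamma^2} - \tfrac{2c^4}{L^2D^2\lambda_{t+1}^2}$ in front of $\lambda_{t+1}a_t^2 m_t$ is nonincreasing and eventually negative, so the whole sum is bounded by the partial sum up to $T_0$, which in turn is controlled by $1/\lambda_{T_0+1} \le O(LD\sqrt{D^2+\gamma^2}/\gamma)$. This yields the constant (deterministic) piece of the regret, while the separated noise sums give the $\sigma_H T^{3/2}$ and $\sigma_g T^{5/2}$ pieces after Jensen. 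Your concluding scaling arithmetic (dividing by $a_T B_T/b_T = \Theta(T^3)$) is then correct.
\smallskip

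In short: the missing idea is the $\min$-trick/surrogate-step-size device to reconcile $\nabla_t$ and $\tnabla_t$, together with the sign-change time $T_0$ argument in place of the quadratic self-bounding you sketched.
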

\begin{remark}
    Similar to \thmref{thm:implicit}, \method achieves the preceding convergence rate independent of the knowledge of problem parameters. 
\end{remark}
Compatible with the \eqref{eq:EG}-based algorithmic design, our proof has the following main steps
\begin{enumerate}
    \item We perform an \emph{offline} regret analysis of Alg.~\ref{alg:explicit} and show adaptive regret bounds - see Prop.~\ref{prop:template-inequality}.
    \item We prove an anytime online-to-batch conversion framework, which generalizes that of \citet{cutkosky2019anytime}, through decoupling iterate averaging from gradient weighting - see \thmref{thm:conversion}.
    \item Combining the adaptive regret bound with the conversion theorem immediately implies \textit{universal, accelerated} value convergence of $O(\frac{D \sigma_g}{\sqrt{\nRuns}} + \frac{D^2 \sigma_H}{\nRuns^{3/2}} + \frac{ \max \bc{ \smooth \eucdiam^3, \sqrt{\stepscale} \eucdiam }}{T^3})$ - see \thmref{thm:explicit}.
\end{enumerate}
Let us begin with clarifying what \emph{offline regret} means for \algref{alg:explicit}. We define the (linear) regret considering the convention in both online learning \citep{rakhlin2013optimization, cutkosky2019anytime} and first-order acceleration literature \citep{wang2018acceleration, kavis2019universal, joulani2020simpler}. We measure the performance of our decisions for the extrapolation sequence such that after playing $\lead[\state]$, our algorithm observes and suffers the linear (weighted) loss with respect to $a_t \nabla f(\lead[\stateavg])$. Hence, we define the regret as
\begin{align} \label{eq:regret} \tag{Reg}
    \Reg(\point) = \sum_{\run = \start}^{\nRuns} a_t \ip{\nabla f(\lead[\stateavg])}{\lead[\state] - \point} 
\end{align}
where we run the algorithm for $T$ rounds. Next up, we provide our generalized conversion result.
\begin{theorem} \label{thm:conversion}
    Let $\Reg(\sol[\point])$ denote the anytime regret for the decision sequence $\{ \lead \}_{\run = \start}^{\nRuns}$ as in \eqref{eq:regret}, and define two sequences of non-decreasing weights $\curr[\wgrad]$ and  $\curr[\wavg]$ such that $\curr[\wgrad], \curr[\wavg] \geq 1$. As long as $ \curr[\wgrad] / \curr[\wavg]$ is ensured to be non-increasing, 
    \begin{align*}
        f(\last[\stateavg]) - f(\sol[\point]) \leq \frac{\Reg(\sol[\point])}{ \last[\wgrad] \frac{\last[\wavgsum]}{\last[\wavg]}}
    \end{align*}
\end{theorem}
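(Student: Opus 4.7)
The plan is to split the proof into two pieces. First, a per-step convexity telescoping --- in the spirit of the anytime online-to-batch conversion of \citet{cutkosky2019anytime} --- bounds $\obj(\last[\stateavg]) - \obj(\sol[\point])$ by a $\curr[\wavg]$-weighted running sum of the inner products $u_\run \defeq \ip{\nabla \obj(\lead[\stateavg])}{\lead - \sol[\point]}$, whose $\curr[\wgrad]$-weighted total is precisely $\Reg(\sol[\point])$. Second, a summation by parts transforms that $\curr[\wavg]$-weighted sum into the $\curr[\wgrad]$-weighted regret, with the monotonicity of the ratio $\curr[\wgrad]/\curr[\wavg]$ ensuring that no slack is accumulated in the rearrangement.

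For the first step, the defining recursion $\curr[\wavgsum] \lead[\stateavg] = \prev[\wavgsum] \beforelead[\stateavg] + \curr[\wavg] \lead$ rearranges into the algebraic identity $\prev[\wavgsum](\lead[\stateavg] - \beforelead[\stateavg]) = \curr[\wavg](\lead - \lead[\stateavg])$. Applying convexity of $\obj$ at $\lead[\stateavg]$ against $\beforelead[\stateavg]$ and against $\sol[\point]$, and exploiting the cancellation $\curr[\wavgsum] - \prev[\wavgsum] - \curr[\wavg] = 0$, I would derive the per-step inequality
\begin{equation*}
\curr[\wavgsum] \obj(\lead[\stateavg]) - \prev[\wavgsum] \obj(\beforelead[\stateavg]) - \curr[\wavg] \obj(\sol[\point]) \leq \curr[\wavg] u_\run.
\end{equation*}
Summing from $\run = 1$ to any intermediate horizon with $\wavgsum_0 = 0$ telescopes to the anytime nonnegativity of the partial sums $P_\run \defeq \sum_{\runalt=1}^{\run} \iter[\wavg] u_\runalt$:
\begin{equation*}
P_\run \geq \curr[\wavgsum] \br{ \obj(\lead[\stateavg]) - \obj(\sol[\point]) } \geq 0,
\end{equation*}
for every $\run \in \{1, \dots, \nRuns\}$, since $\sol[\point]$ is a minimizer; in particular $P_\nRuns \geq \last[\wavgsum] \br{ \obj(\last[\stateavg]) - \obj(\sol[\point]) }$.

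For the second step, writing $\curr[\wgrad] u_\run = (\curr[\wgrad]/\curr[\wavg])(P_\run - P_{\run-1})$ with $P_0 = 0$ and applying Abel summation yields
\begin{equation*}
\Reg(\sol[\point]) = \sum_{\run=1}^{\nRuns} \curr[\wgrad] u_\run = \frac{\last[\wgrad]}{\last[\wavg]} P_\nRuns + \sum_{\run=1}^{\nRuns-1} \br{ \frac{\curr[\wgrad]}{\curr[\wavg]} - \frac{\next[\wgrad]}{\next[\wavg]} } P_\run.
\end{equation*}
Under the non-increasing ratio hypothesis, each bracketed coefficient is nonnegative, and combined with $P_\run \geq 0$ from the first step, every term of the tail sum is nonnegative. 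Dropping them gives $\Reg(\sol[\point]) \geq (\last[\wgrad]/\last[\wavg]) P_\nRuns$; substituting $P_\nRuns \geq \last[\wavgsum](\obj(\last[\stateavg]) - \obj(\sol[\point]))$ and rearranging is exactly the claim.

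The only technical delicacy I anticipate is the bookkeeping in the per-step telescoping --- verifying both the cancellation $(\curr[\wavgsum] - \prev[\wavgsum] - \curr[\wavg])\obj(\lead[\stateavg]) = 0$ and that the leftover inner products collapse, through the averaging identity, into the single clean term $\curr[\wavg] u_\run$; this is routine but easy to mis-handle. The conceptual heart of the proof is that nonnegativity of the $\curr[\wavg]$-weighted partial sums $P_\run$, itself a direct consequence of applying convexity at each running average, is exactly what permits Abel summation with a monotone ratio $\curr[\wgrad]/\curr[\wavg]$ to pass losslessly from averaging weights to gradient weights --- this is the decoupling that the existing conversion of \citet{cutkosky2019anytime} (which tacitly assumes $\curr[\wgrad] = \curr[\wavg]$) does not exploit.
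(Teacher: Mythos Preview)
Your proof is correct and follows essentially the same approach as the paper: both use the averaging identity $\prev[\wavgsum](\lead[\stateavg] - \beforelead[\stateavg]) = \curr[\wavg](\lead - \lead[\stateavg])$, convexity of $\obj$ at $\lead[\stateavg]$, and the monotonicity of $\curr[\wgrad]/\curr[\wavg]$ to discard a nonnegative tail sum. The only difference is organizational --- you first establish the $\curr[\wavg]$-weighted per-step inequality and then pass to $\curr[\wgrad]$-weights via Abel summation, whereas the paper rewrites $\curr[\wgrad]\lead$ in terms of the averages up front and telescopes directly; the expressions obtained just before dropping the nonnegative terms coincide.
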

\begin{remark}
    This conversion result holds independent of the order of smoothness of the objective as long as $\obj$ is convex. Moreover, it allows averaging parameter $\curr[\wavg]$ to be asymptotically larger than gradient weights $\curr[\wgrad]$, enabling a more aggressive averaging strategy when necessary. 
\end{remark}
To complement the lower bound to the regret $\Reg(\sol[\point])$, we present an upper bound that helps us explain how we exploit second-order smoothness for a more aggressive weighting, hence the rate $O(1 / \nRuns^3)$. 
\begin{proposition} \label{prop:template-inequality}
    Let $\{ \lead \}_{\run = \start}^{\nRuns}$ be generated by \algref{alg:explicit}, run with a non-increasing step-size sequence $\curr[\stepalt]$ and non-decreasing sequences of weights $\curr[\wgrad], \curr[\wavg] \geq 1$ such that $\curr[\wgrad] / \curr[\wavg]$ is also non-increasing. Then, the following guarantee holds:
    \begin{align*}
        \mathbb E \Reg(\sol[\point]) \leq \frac{1}{2} \Expect{ \frac{3\eucdiam^2}{\gamma_{T+1}} + \sum_{\run=\start}^\nRuns \next[\stepalt] \curr[\wgrad]^2 \norm{\sgrad(\lead[\stateavg], \lead[\xi]) - \Fs(\lead[\stateavg]; \curr[\stateopt], \curr[\xi])}^2 - \frac{\norm{\lead - \curr}^2}{\next[\stepalt]} }
    \end{align*}
\end{proposition}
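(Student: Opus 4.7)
The plan is to derive a per-iteration inequality from the first-order optimality conditions of the two sub-problems in \algref{alg:explicit}, handle the stochastic/extrapolation mismatch by an optimism-style Young's inequality, telescope carefully while accounting for the adaptivity of $\curr[\stepalt]$, and finally take expectations to relate the stochastic gradient to $\nabla\obj(\lead[\stateavg])$ in $\Reg(\sol[\point])$.

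Concretely, using the identity $(\curr[\wavg]/\curr[\wavgsum])(\lead - \curr) = \lead[\stateavg] - \curr[\stateopt]$ to absorb the Hessian correction of the extrapolation sub-problem into an evaluation of $\Fs(\lead[\stateavg];\curr[\stateopt],\curr[\xi])$, both updates yield three-point inequalities of the standard form
\begin{align*}
\curr[\wgrad]\ip{\Fs(\lead[\stateavg];\curr[\stateopt],\curr[\xi])}{\lead - \pointalt} &\leq \tfrac{1}{2\curr[\stepalt]}\br{\norm{\curr - \pointalt}^2 - \norm{\lead - \curr}^2 - \norm{\lead - \pointalt}^2},\\
\curr[\wgrad]\ip{\sgrad(\lead[\stateavg],\lead[\xi])}{\next - \pointalt} &\leq \tfrac{1}{2\curr[\stepalt]}\br{\norm{\curr - \pointalt}^2 - \norm{\next - \curr}^2 - \norm{\next - \pointalt}^2}
\end{align*}
for every $\pointalt \in \compact$. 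Choosing $\pointalt = \sol[\point]$ in the main step and $\pointalt = \next$ in the extrapolation, adding them, and decomposing $\curr[\wgrad]\ip{\sgrad}{\lead - \sol[\point]} = \curr[\wgrad]\ip{\sgrad}{\next - \sol[\point]} + \curr[\wgrad]\ip{\sgrad - \Fs}{\lead - \next} + \curr[\wgrad]\ip{\Fs}{\lead - \next}$ in the classical EG fashion, the $\norm{\next - \curr}^2$ terms cancel and the inner $\Fs$-contribution is swallowed by the extrapolation inequality. This isolates the optimistic gap $\curr[\wgrad]\ip{\sgrad(\lead[\stateavg],\lead[\xi]) - \Fs(\lead[\stateavg];\curr[\stateopt],\curr[\xi])}{\lead - \next}$, which I bound by Young's inequality with weight $\next[\stepalt]$: the quadratic residue $\tfrac{1}{2\next[\stepalt]}\norm{\lead - \next}^2$ is mismatched with the three-point term $\tfrac{1}{2\curr[\stepalt]}\norm{\lead - \next}^2$ by exactly the factor produced by the step-size adaptation.

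The next step is to rewrite every $1/\curr[\stepalt]$-coefficient as $1/\next[\stepalt]$ up to a non-negative correction $(1/\next[\stepalt] - 1/\curr[\stepalt])\norm{\cdot}^2 \leq (1/\next[\stepalt] - 1/\curr[\stepalt])\eucdiam^2$. This is needed for three distinct terms: (i) the distance $\norm{\next - \sol[\point]}^2$, so that $\sum_\run \tfrac{1}{2\next[\stepalt]}\br{\norm{\curr - \sol[\point]}^2 - \norm{\next - \sol[\point]}^2}$ actually telescopes; (ii) the mismatch $\norm{\lead - \next}^2$ produced by Young's vs.\ the three-point identity; and (iii) the negative curvature term $\norm{\lead - \curr}^2$, in order to match the $-\norm{\lead - \curr}^2/(2\next[\stepalt])$ requested by the statement. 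Summing each correction over $\run=1,\dots,\nRuns$ and using monotonicity of $\curr[\stepalt]$ collapses each into a copy of $\tfrac{\eucdiam^2}{2\gamma_{\nRuns+1}}$, jointly accounting for the prefactor $3$ in $\tfrac{3\eucdiam^2}{\gamma_{\nRuns+1}}$.

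Finally, I pass from $\sum_\run \curr[\wgrad]\ip{\sgrad(\lead[\stateavg],\lead[\xi])}{\lead - \sol[\point]}$ to $\Reg(\sol[\point])$ via the splitting
$\curr[\wgrad]\ip{\nabla\obj(\lead[\stateavg])}{\lead - \sol[\point]} = \curr[\wgrad]\ip{\sgrad(\lead[\stateavg],\lead[\xi])}{\lead - \sol[\point]} + \curr[\wgrad]\ip{\nabla\obj(\lead[\stateavg]) - \sgrad(\lead[\stateavg],\lead[\xi])}{\lead - \sol[\point]}$. Conditioning on the $\sigma$-algebra generated by $\{\xi_\runalt,\half{\xi}{\runalt}\}_{\runalt < \run} \cup \{\curr[\xi]\}$, the decision $\lead$ is measurable while $\lead[\xi]$ is independent with $\exof{\sgrad(\lead[\stateavg],\lead[\xi])\mid\cdot} = \nabla\obj(\lead[\stateavg])$, so the second summand is a zero-mean martingale-difference sequence whose total expectation vanishes. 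The main obstacle is precisely the bookkeeping of the step-size adaptation: picking the Young's weight to be $\next[\stepalt]$ (rather than $\curr[\stepalt]$) is what forces three separate diameter-bounded corrections to appear, and one must verify that each telescopes cleanly through monotonicity of $\curr[\stepalt]$ into the unified $\tfrac{3\eucdiam^2}{2\gamma_{\nRuns+1}}$ leading term.
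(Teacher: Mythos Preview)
Your proposal is correct and follows essentially the same approach as the paper's proof: optimality conditions of the two sub-problems combined via the choice $\pointalt=\sol[\point]$ and $\pointalt=\next$, the identity $(\curr[\wavg]/\curr[\wavgsum])(\lead-\curr)=\lead[\stateavg]-\curr[\stateopt]$ to reveal $\Fs$, Young's inequality with weight $\next[\stepalt]$ on the optimistic gap, three diameter-bounded corrections from the step-size shift $1/\curr[\stepalt]\to1/\next[\stepalt]$ that telescope into $\tfrac{3\eucdiam^2}{2\gamma_{\nRuns+1}}$, and the martingale argument conditioning on $\curr[\mathcal F]$ to pass from $\sgrad(\lead[\stateavg],\lead[\xi])$ to $\nabla\obj(\lead[\stateavg])$. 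The paper organizes the telescoping slightly differently (summation by parts on $\norm{\curr-\sol[\point]}^2$ first, then a single $\eucdiam^2$ bound on the combined $\norm{\lead-\next}^2+\norm{\lead-\curr}^2$ correction), but the substance and the $3\eucdiam^2$ accounting are identical.
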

Observe that the inequality in Proposition~\ref{prop:template-inequality} is agnostic to the design of our step-size in Eq.~\eqref{eq:adaptive-stepsize-explicit} as well as the selection of the weights as described in \thmref{thm:explicit}. 
It essentially applies to any non-increasing sequence of step-sizes and non-decreasing gradient weight sequence $\curr[\wgrad] \geq 1$. To obtain it, we neither used convexity nor the smoothness of the objective. In fact, the structure of the objective function, i.e., its convexity, will not be needed for upper-bounding the regret expression, and required only for the conversion in \thmref{thm:conversion}.

Now, let us explain how we make use of second-order smoothness for enjoying faster rates, and give a brief discussion of how the regret bound will look in its final form. First, we decompose the stochastic term $\norm{\sgrad(\lead[\stateavg], \lead[\xi]) - \Fs(\lead[\stateavg]; \curr[\stateopt], \curr[\xi])}^2$ into deterministic feedback and noise. Then, we argue that \textit{the noisy component} grows as $O(\sigma_H \nRuns^{3/2} + \sigma_g \nRuns^{5/2})$. On the other hand, achieving the accelerated $O(1/T^3)$ component of the universal rate amounts to showing that the regret has a constant, $O(1)$, component. In the worst-case sense, however, \textit{the deterministic component itself} grows as $O(\nRuns^{5/2})$. Fortunately, we identify that the negative term is ``large enough'' in magnitude to control the growth of the deterministic term, permitting a constant component $O(\smooth \eucdiam^2)$ for the regret.

Although the regret bound of $O(\smooth \eucdiam^3 + \eucdiam^2 \sigma_H \nRuns^{3/2} + \eucdiam \sigma_g \nRuns^{5/2})$ seems counter-intuitive from an online-learning perspective, it will make perfect sense when we discuss how second-order smoothness leads to ``faster'' conversion through more aggressive averaging. As a matter of fact, we will continue our discussion with how second-order smoothness helps us accelerate. It turns out that using \eqref{eq:Hess-smooth}, iterate averaging as in Eq.\eqref{eq:averages} and compactness of $\compact$, we can bound the negative term as,
\begin{align*}
    -\frac{1}{\next[\stepalt]} \norm{\lead - \curr}^2
    &\leq
    - \frac{1}{\smooth^2 \eucdiam^2 \next[\stepalt]} \run^4 \norm{ \nabla \obj(\lead[\stateavg]) - \F(\lead[\stateavg]; \curr[\stateopt]) }^2
\end{align*}
Observe that to seamlessly combine the positive and negative terms, our analysis enforces that $\curr[\wgrad] = O(\run^2)$ and $\curr[\wavg] = \Omega (\run^2)$. Then, the conversion implies a convergence rate of $\Reg(\sol[\point]) / \nRuns^3$, hence the recipe for acceleration. Therefore, the constant component of the regret amounts to $O(1/\nRuns^3)$ convergence rate, while the stochastic component of the regret implies $O(\sigma_H / \nRuns^{3/2} + \sigma_g / \sqrt{T})$ rate, giving us the first universal acceleration beyond first-order smoothness.

Let us conclude by discussing the intricate relationship between the universal step-size and the regret bounds. Simply put, growth of the summation in the denominator of $\gamma_t$ is of the same order as the regret bound. Under stochastic gradient and Hessian oracles, the regret bound is of order $O(T^{5/2})$, and we can trivially show using variance bounds that the step-size is lower bounded by $O(T^{-5/2})$. On the other extreme, the regret bound described in Proposition~\ref{prop:template-inequality} is bounded by a constant under deterministic oracles, which implies that the summation in the denominator of the step-size is in turn summable, i.e., the step-size has a positive, constant lower bound. This adaptive behavior of our step-size enables automatic adaptation to noise levels and thus the universal rates.

\if 0
\begin{proof}[Proof Sketch of \thmref{thm:explicit}]
    Due to space constraints, we will discuss only the principal parts of the proof. First, define $\curr[\eps] = [\sgrad(\lead[\stateavg]) - \sgrad(\curr[\stateopt])] - [\nabla \obj(\lead[\stateavg]) - \nabla \obj(\curr[\stateopt])]$ and $\curr[\delta] = \shess(\lead[\state]) - \nabla^2 \obj(\lead[\state])$ as shorthand for noise in gradient and Hessian feedback, respectively. For brevity, we redact absolute constants from expressions. Then, we define deterministic/stochastic placeholders:
    \begin{equation}
        \begin{aligned}
            \curr[\nabla] &= \nabla \obj(\lead[\stateavg]) - \nabla \obj(\curr[\stateopt]) - \nabla^2 \obj (\curr[\stateopt]) (\lead[\stateavg] - \curr[\stateopt])\\
            \curr[\tnabla] &= \sgrad (\lead[\stateavg]) - \sgrad(\curr[\stateopt]) - \shess (\curr[\stateopt]) (\lead[\stateavg] - \curr[\stateopt]) = \curr[\nabla] + \curr[\eps] + \curr[\delta]
        \end{aligned}
    \end{equation}
    Following a regret-type analysis, we eventually end up with the following bound on (expected) regret,
    \begin{align}
	    \Expect{ \Reg(\sol[\point]) } \leq \sqrt{ 1 + \sum_{\run=\start}^\nRuns \curr[\wgrad]^2 \norm{\curr[\tnabla] }^2 } - \frac{1}{2} \sum_{\run=\start}^\nRuns \frac{1}{\next[\stepalt]} \norm{\lead - \curr}^2
    \end{align}
    The negative term above is the key for achieving acceleration of $O(1/T^3)$. Using second-order smoothness, averaging and boundedness,
    \begin{align*}
        -\frac{1}{\next[\stepalt]} \norm{\lead - \curr}^2
        &\leq
        - \frac{1}{\next[\stepalt]} \run^4 \norm{ \nabla \obj(\lead[\stateavg]) - \F(\lead[\stateavg]; \curr[\stateopt]) }^2 = - \frac{1}{\next[\stepalt]} \run^4 \norm{ \curr[\nabla] }^2
    \end{align*}
    Observe that to seamlessly combine the positive and negative terms, our analysis enforces that $\curr[\wgrad] = O(\run^2)$ and $\curr[\wavg] = \Omega (\run^2)$, hence the recipe for acceleration. The second challenge is relating $\norm{\curr[\nabla]}$ and $\norm{\curr[\tnabla]}$, for which we generate a pair of useful upper/lower bounds.
    \begin{align*}
        (\norm{\curr[\nabla]}^2 \wedge \norm{\curr[\tnabla]}^2) &\leq \norm{\curr[\nabla]}^2\\
        (\norm{\curr[\nabla]}^2 \wedge \norm{\curr[\tnabla]}^2) &\leq \norm{\curr[\tnabla]}^2 \leq 2 (\norm{\curr[\nabla]}^2 \wedge \norm{\curr[\tnabla]}^2) + 4 \norm{\curr[\delta](\lead[\stateavg] - \curr[\stateopt])}^2 + 4 \norm{\curr[\eps]}^2
    \end{align*}
    where, $(a \wedge b) = \min\{ a,b \}$. Combining the terms and applying the upper/bound appropriately, 
    \begin{align*}
        \underbrace{ \sqrt{ 1 + \sum_{\run=\start}^\nRuns \curr[\wgrad]^2 (\norm{\curr[\nabla]}^2 \wedge \norm{\curr[\tnabla]}^2) } - \sum_{\run=\start}^\nRuns \frac{1}{\next[\stepalt]} \curr[\wgrad]^2 (\norm{\curr[\nabla]}^2 \wedge \norm{\curr[\tnabla]}^2) }_{\textrm{(i)}} + \underbrace{ \sqrt{ \sum_{\run=\start}^\nRuns \curr[\wgrad] \norm{ \curr[\delta] }^2 } }_{\textrm{(ii)}} + \underbrace{ \sqrt{ \sum_{\run=\start}^\nRuns \curr[\wgrad]^2 \norm{ \curr[\eps] }^2 } }_{\textrm{(iii)}}
    \end{align*}
    We subsequently show the regret bounds $(i) \leq O(1)$, $(ii) \leq O(\varhess T^{3/2})$ and $(iii) \leq O(\vargrad T^{5/2})$. Our online-to-batch conversion implies the convergence rate $O(1/T^3 + \varhess / T^{3/2} + \vargrad / \sqrt{T})$.
\end{proof}
\fi


\section{Experiments} \label{sec:experiments}
In this section, we will present practical performance of \method against a set of first-order algorithms, e.g., \gd, \sgd, \adagrad~\citep{duchi2011adaptive}, \accelegrad~\citep{levy2018online}, \unixgrad~\citep{kavis2019universal}; and second-order methods, e.g., \newton, Optimal Monteiro-Svaiter (\textsc{OptMS}) \citep{carmon2022optimal}, Cubic Regularization of Newton's method (\textsc{CRN}) \citep{nesterov2006cubic} and Accelerated CRN (\textsc{ACRN}) \citep{nesterov2008accelerating} for least squares and logistic regression problems over a LIBSVM datasets, \texttt{a1a} and \texttt{a9a}. Our main objective is three-folds. First, when the objective has a favorable structure as in least squares, second-order method has cheap oracle costs 
and display superior convergence behavior. Second, we want to demonstrate the improved rates of our algorithm against accelerated and non-accelerated first-order methods through the $\ell_2$-regularized logistic regression problem. Finally, we compare our methods with respect to other second-order methods that achieve (almost) optimal rates. In the plots, the statement \textit{\# of oracle calls} on the x-axis counts any gradient or Hessian computation as one oracle call. Also note that we consider the black-box oracle model in which the algorithms only have access to gradient and Hessians without knowing the actual objective function.

When the problem is suitable, second-order methods show promising performance with truly superior run time. In Figure~\ref{subfig:leastsquares}, we display the result for least squares setting. Second-order methods are known to be suitable for quadratic problems, and our method exploits its hybrid construction to converge significantly faster than first-order methods, matching the behavior of \newton. 
\begin{figure}[ht]
\centering
     \begin{subfigure}{0.48\textwidth}
         \centering
         \includegraphics[width=\textwidth]{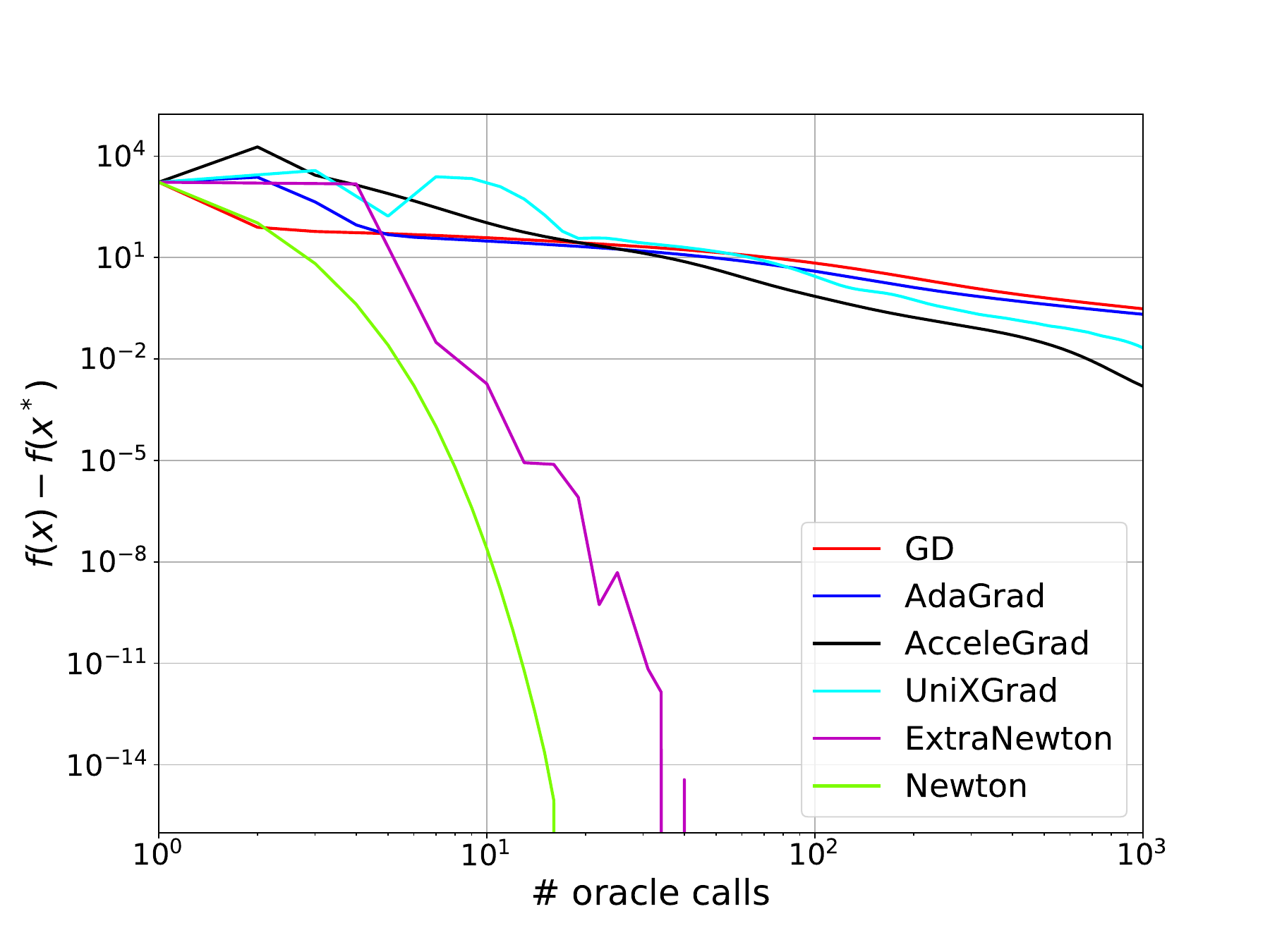}\vspace{-2mm}
         \caption{Least-squares regression on \texttt{a1a}}
         \label{subfig:leastsquares}
     \end{subfigure}
     \hfill
     \begin{subfigure}{0.48\textwidth}
         \centering
         \includegraphics[width=\textwidth]{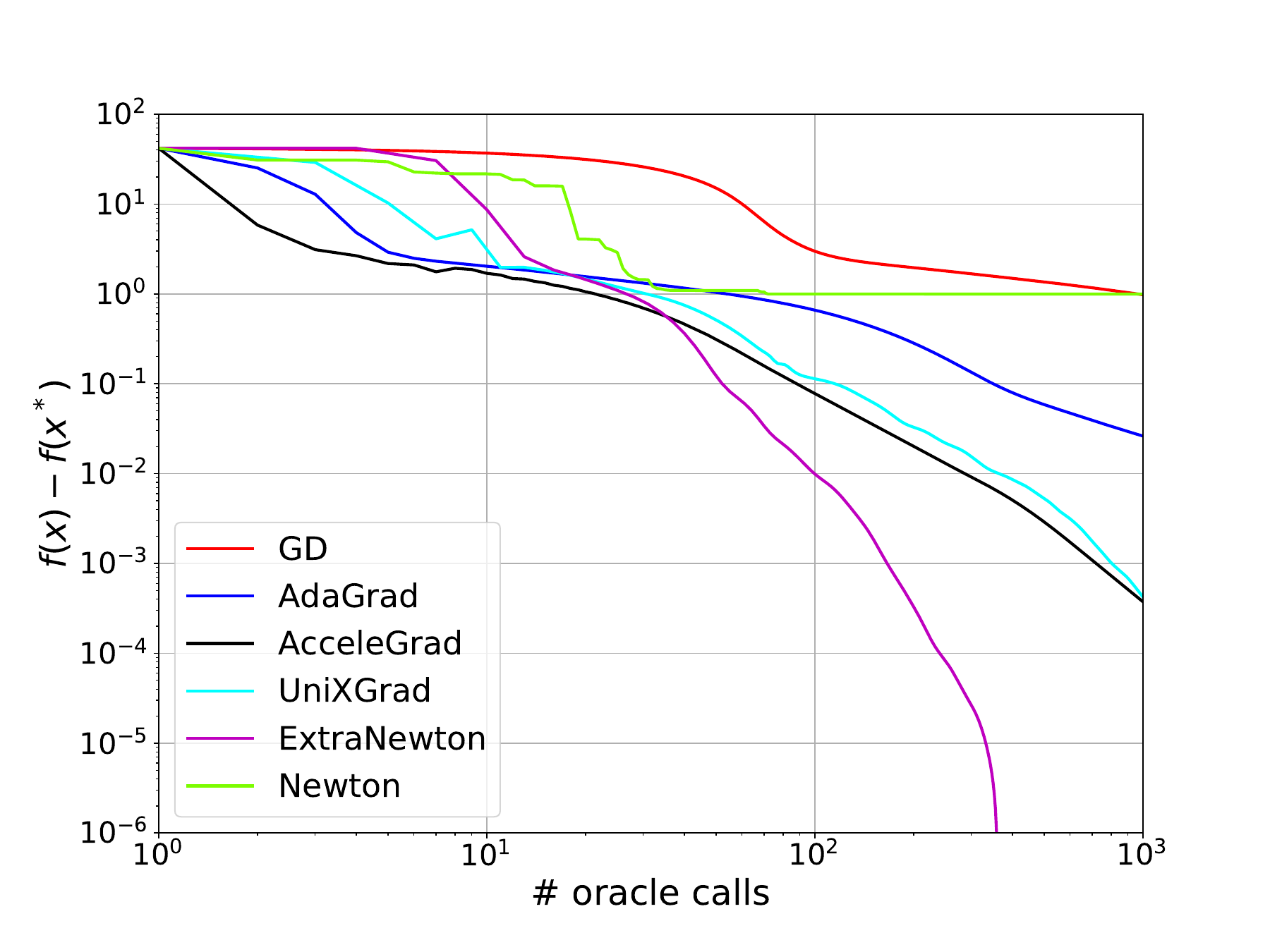}\vspace{-2mm}
         \caption{Logistic regression on \texttt{a1a}}
         \label{subfig:logreg}
     \end{subfigure}
        \caption{Comparison of value convergence for regression problems with deterministic oracle access}
        \label{fig:value-convergence}
\end{figure}

For the logistic regression problem, we regularize it with $g(x) = 1/2 \norm{x}^2$, but use a very small regularization constant to render the problem ill-conditioned, making things slightly more difficult for the algorithms \citep{marteau2019globally, mishchenko2021regularized}. Although we implement \newton with line-search, we actually observed a sporadic convergence behavior; when the initial point is close to the solution it converges similarly to \method, however when we initialize further away it doesn't converge. This non-convergent behavior has been known for \newton, even with line-search present \citep{jarre2016simple}. On the contrary, \method consistently converges; even if we perturb the initial step-size and make it adversarially large, it manages to recover due to its adaptive step-size.

\begin{figure}[ht]
\centering
     \begin{subfigure}{0.48\textwidth}
         \centering
         \includegraphics[width=\textwidth]{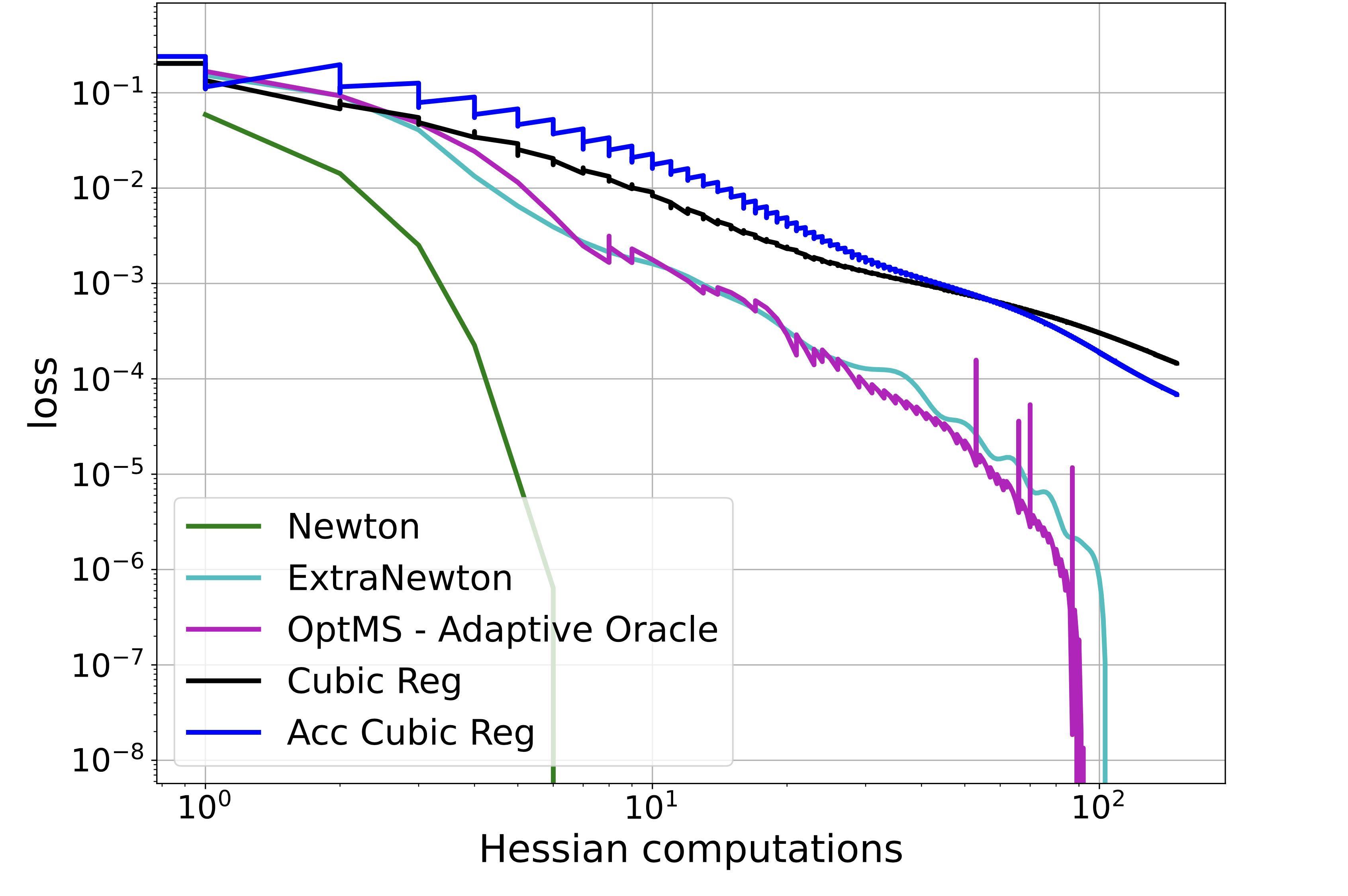}\vspace{-2mm}
         \caption{Value convergence w.r.t \# Hessian oracle calls}
         \label{subfig:hessian}
     \end{subfigure}
     \hfill
     \begin{subfigure}{0.48\textwidth}
         \centering
         \includegraphics[width=\textwidth]{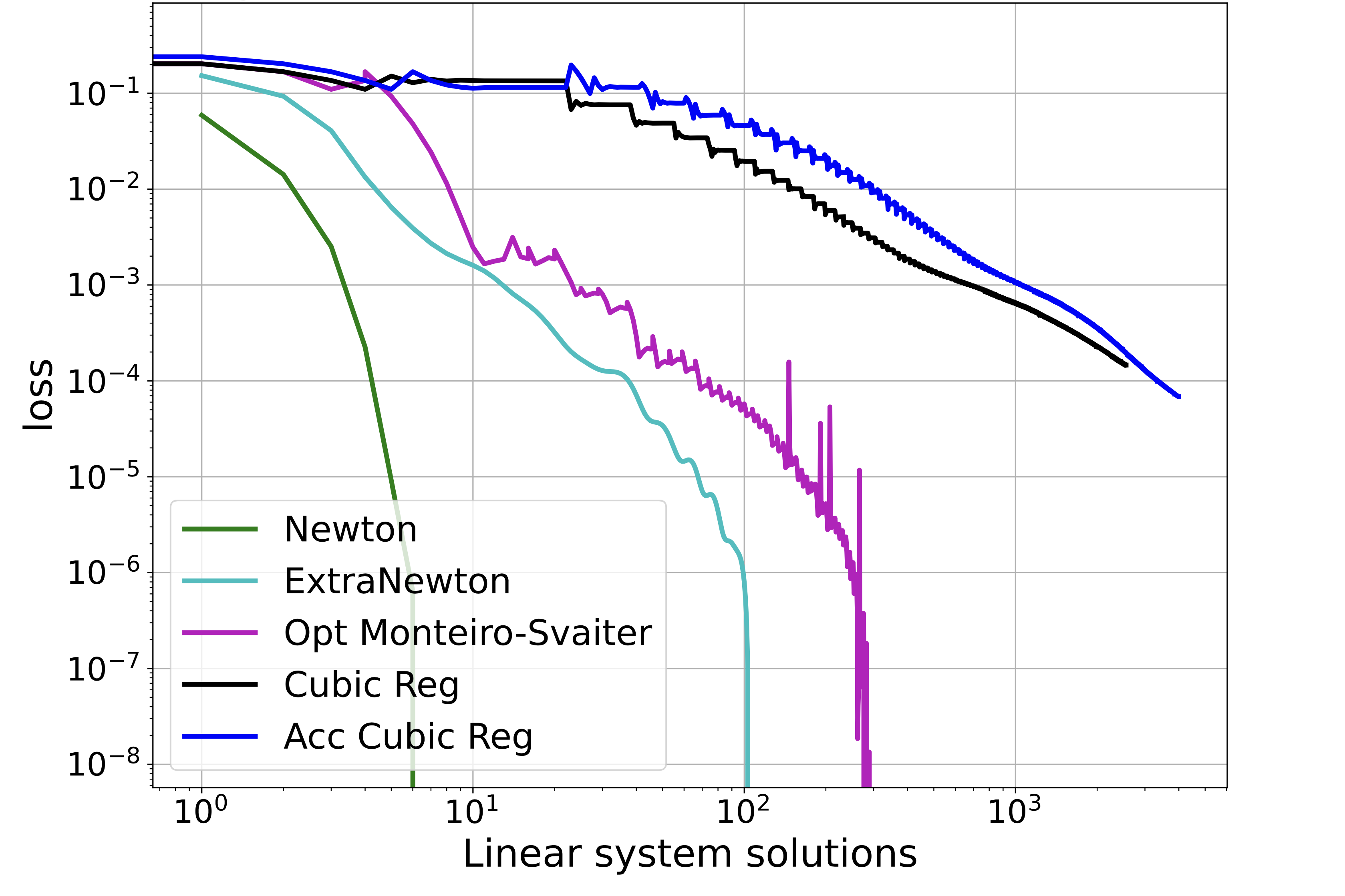}\vspace{-2mm}
         \caption{Value convergence w.r.t. \# linear system solutions}
         \label{subfig:linearsys}
     \end{subfigure}
        \caption{\method vs. second-order methods. Logistic regression with \texttt{a9a} dataset}
        \label{fig:second-order}
\end{figure}

We complement our numerical tests by comparing \method with a set of second-order methods. To that end, we implemented our method within the framework presented in \cite{carmon2022optimal}. Using the implementation and the experimental setup provided in their GitHub repository \citep{hausler2022msoptimal}, we implemented our method in their code and compared against \newton, \textsc{CRN}, \textsc{ACRN} and \textsc{OptMS} algorithms. Figure~\ref{fig:second-order} shows that \method has comparable performance to \textsc{OptMS}, which has the theoretically faster rate $O(1/\nRuns^{7/2})$, and marginally outperforms with respect to number of linear system solutions since the linesearch procedure of \textsc{OptMS} might require multiple system solutions per iteration. While \textsc{CRN} and \textsc{ACRN} has worse convergence than \method, \newton seems to have the fastest. Note that the initialization favors \newton as it lies in a close neighborhood of the solution, and \newton performance sporadically deteriorates when initialized arbitrarily.

\section{Conclusion} \label{sec:conclusion}
In this work, 
we present the \textit{first} universal, second-order algorithm, \method, which enjoys the value convergence rate of $O(\sigma_g / \sqrt{\nRuns} + \sigma_H / \nRuns^{3/2} + 1 / \nRuns^3)$. 
By extending the notion of bounded variance on stochastic gradients to stochastic \emph{Hessian}, we prove adaptation to the noise in first and second-order oracles, simultaneously, while showing accelerated rates matching that of \citet{nesterov2008accelerating} under the fully deterministic oracle model.
To that end, an important open question is whether we could design a method that achieves an improved rate interpolation guarantee $O(\sigma_g / \sqrt{\nRuns} + \sigma_H / \nRuns^{3/2} + 1 / \nRuns^{7/2})$ without depending on any line-search/bisection mechanism. 
We defer this to a future work.

\section*{Acknowledgments} \label{sec:ack}
This project has received funding from the European Research Council (ERC) under the European Union's Horizon 2020 research and innovation programme (grant agreement n° $725594$ - time-data) and the Swiss National Science Foundation (SNSF) under grant number $200021\_205011$.

\bibliography{refs, bibtex/Bibliography-PM}
\bibliographystyle{plainnat}




\newpage
\appendix

\par\noindent\rule{\textwidth}{1pt}
\begin{center}
	\vspace{-2mm}
    \LARGE APPENDIX
    \vspace{-2mm}
\end{center}
\par\noindent\rule{\textwidth}{1pt}

%

\section{Preface} \label{app:prelude}

In \appref{app:notation}, we provide a complete list of notation and definitions that we have used throughout the manuscript.

In \appref{app:experiments}, we showcase additional numerical evidence for the comparison we provided in the main text. Due to space constraints, we moved most of our plots to the appendix. We investigate the practical behavior in both deterministic and stochastic setting.

In \appref{app:conversion}, we begin with the proof of the generalized online-to-batch conversion in \thmref{thm:conversion} to form the connection between the offline regret $\Reg(\sol[\point])$ and value convergence $\obj( \lastlead[\stateavg] ) - \obj(\sol[\point])$.

Then in \appref{app:template-inequality}, we present the analysis for obtaining the template regret bound in Proposition~\ref{prop:template-inequality}. This template inequality is indeed the point where the analysis in the deterministic, implicit setting and universal, explicit setting part ways.

In \appref{app:numerical}, we take a small detour to introduce a crucial numerical inequality that is commonly used in the analysis of adaptive methods.

We present the universal convergence analysis of \method (\thmref{thm:explicit}) in \appref{app:explicit}.

In \appref{app:implicit}, we share the analysis of our conceptual framework: convergence of the implicit algorithm \eqref{eq:implicit} in deterministic setting (\thmref{thm:implicit}), with the appropriate corollary of Proposition~\ref{prop:template-inequality} for the case of deterministic oracles in this section.

\section{Notation and Definitions} \label{app:notation}

To complement the notation in \secref{sec:problem-setup}, we will present a complete list of definitions and parameter descriptions to make it easier for the reader to follow the technical arguments in the whole paper.
\renewcommand{\arraystretch}{1.5}
\begin{table}[H]
\centering
\caption{A complete list of parameters and expressions, their definitions and descriptions}
\vspace{1mm}
\begin{tabular}{l|l|l} \label{tbl:app_notation}
	&\multicolumn{1}{|c|}{\textbf{Formal Definition}}
	&\multicolumn{1}{|c}{\textbf{Description}}
	\\
\hline
$\obj$ & $\obj : \mathbb R^d \to \mathbb R + \bc{ +\infty } $ & objective function\\
\hline
$\compact$ & $\compact \subset \mathbb R^d$ & convex and compact constraint set\\
\hline
$\sol$ & $= \argmin_{\point \in \compact} \obj(\point)$ & solution of the constrained problem \eqref{eq:opt}\\
\hline
$\eucdiam$ & $= \sup_{x, y \in \compact} \norm{x - y} $ & diameter of the constraint set $\compact$\\
\hline
$\smooth$ & $\|\nabla^{2}f(\point)-\nabla^{2}f(\pointalt)\| \leq \smooth \|\point-\pointalt\|$ & second-order smoothness constant of $\obj$\\
\hline
$\sgrad(\cdot, \xi)$ & $\Expect{ \sgrad(\point, \xi) \mid \point } = \nabla \obj(\point)$, $\,\,x \indep \xi$ & unbiased gradient estimate\\
\hline
$\shess(\cdot, \xi)$ & $\Expect{ \shess(\point, \xi) \mid \point } = \nabla^2 \obj(\point)$, $\,\,x \indep \xi$ & unbiased Hessian estimate\\
\hline
$\curr[\mathcal F]$ & $= \sigma(\xi_1, \xi_{1 + \frac{1}{2}}, \cdots, \curr[\xi])$ & $\sigma$-algebra generated by random variables up to $\curr[\xi]$\\
\hline
$\lead[\mathcal F]$ & $= \sigma(\xi_1, \xi_{1 + \frac{1}{2}}, \cdots, \curr[\xi], \lead[\xi])$ & $\sigma$-algebra generated by random variables up to $\lead[\xi]$\\
\hline
$\vargrad$ & $\Expect{ \norm{ \sgrad(\point) - \nabla \obj (\point) }^2 \mid \point } \leq \vargrad^2$ & variance bound for gradient estimate\\
\hline
$\varhess$ & $\Expect{ \norm{ \shess(\point) - \nabla^2 \obj (\point) }^2 \mid \point } \leq \varhess^2$ & variance bound for Hessian estimate\\
\hline
$\sigma$ & $= \max \bc{ \vargrad, \varhess }$ & maximum variance of oracles\\
\hline
$\curr[\stepalt]$ & Eq.~\eqref{eq:adaptive-stepsize-implicit} and Eq.~\eqref{eq:adaptive-stepsize-explicit} & adaptive step-size\\
\hline
$\curr[\wgrad]$ & $= \run^2$ & gradient weights\\
\hline
$\curr[\wgradsum]$ & $= \sum_{\runalt=\start}^\run \iter[\wgrad]$ & normalization factor for gradient weights $\curr[\wgrad]$\\
\hline
$\curr[\wavg]$ & $= \run^p$, where $p \geq 2$ & averaging weights\\
\hline
$\curr[\wavgsum]$ & $= \sum_{\runalt=\start}^\run \iter[\wavg]$ & normalization factor for averaging weights $\curr[\wavg]$\\
\hline
\end{tabular}
\end{table}
\renewcommand{\arraystretch}{1}

\section{Further Experimental Evaluation} \label{app:experiments}
In this section we will present additional numerical experiments in two fronts;
\begin{itemize}
    \item[-] we run logistic regression and least-squares regression under deterministic gradients with another LIBSVM datasets, \texttt{w1a}, 
    \item[-] and subsequently display results in the stochastic setting for the same datasets \texttt{a1a, w1a}. 
\end{itemize}
Figure~\ref{fig:app-determinisitic} shows the results for the deterministic experiments while Figure~\ref{fig:app-stochastic} focuses on the results of the stochastic setting. In both figures, we present results for the least-squares in the first column and the logistic regression in the second column.

In Figure~\ref{fig:app-determinisitic}, the x-axis represent the number of calls made to the \textit{deterministic} oracle, and in Figure~\ref{fig:app-stochastic}, x-axis corresponds to \textit{number of full data passes (epochs)} to compute the stochastic gradient estimates. The deterministic setup is the same as we described in the main text. In the case of stochastic gradients, we compute mini-batch gradient estimates with a batch-size of 50 samples. We plot the mean of 5 trials for all the methods under mini-batch gradients and also display the variance as the shaded region around the mean curve.

For the case of logistic regression under deterministic gradients, our method performs better than the rest of the pack with \texttt{a1a} dataset but has almost matching performance with a smaller performance gap compared to accelerated first-order methods with \texttt{w1a} dataset. For both datasets, we tried to tune Newton's method for a randomly-chosen initialization but it was very difficult to find a parameter setting where Newton shows any reasonable behavior. One could notice that Newton's method doesn't converge to the solution for logistic regression problem for this random initial point.

\begin{figure}[H]
\centering
     \begin{subfigure}{0.48\textwidth}
         \centering
         \includegraphics[width=\textwidth]{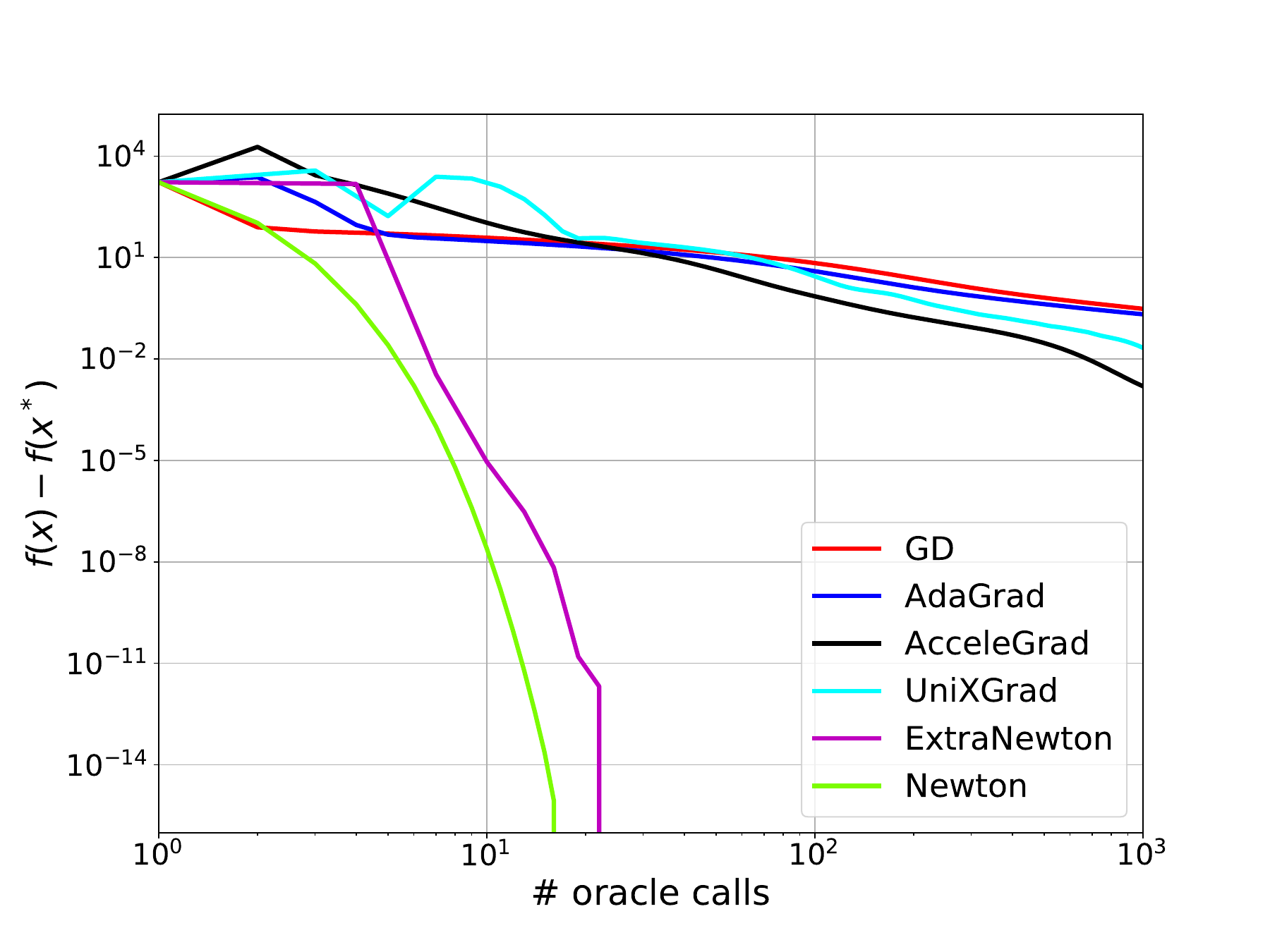}\vspace{-2mm}
         \caption{Least-squares, \texttt{\textbf{a1a}}}
         \label{subfig:app-leastsq-a1a}
     \end{subfigure}
     \begin{subfigure}{0.48\textwidth}
         \centering
         \includegraphics[width=\textwidth]{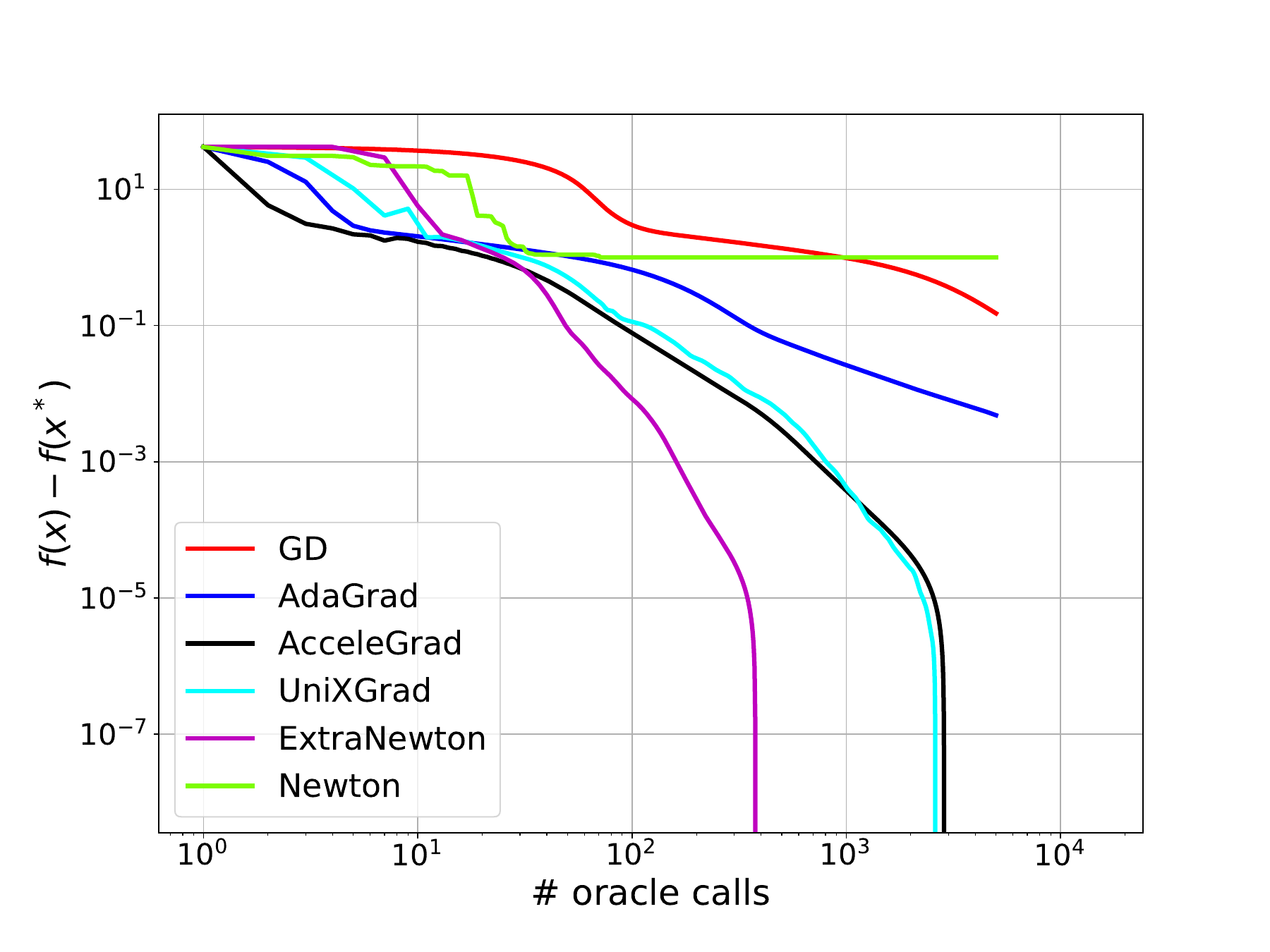}\vspace{-2mm}
         \caption{Logistic regression, \texttt{\textbf{a1a}}}
         \label{subfig:app-logreg-a1a}
     \end{subfigure}
     \begin{subfigure}{0.48\textwidth}
         \centering
         \includegraphics[width=\textwidth]{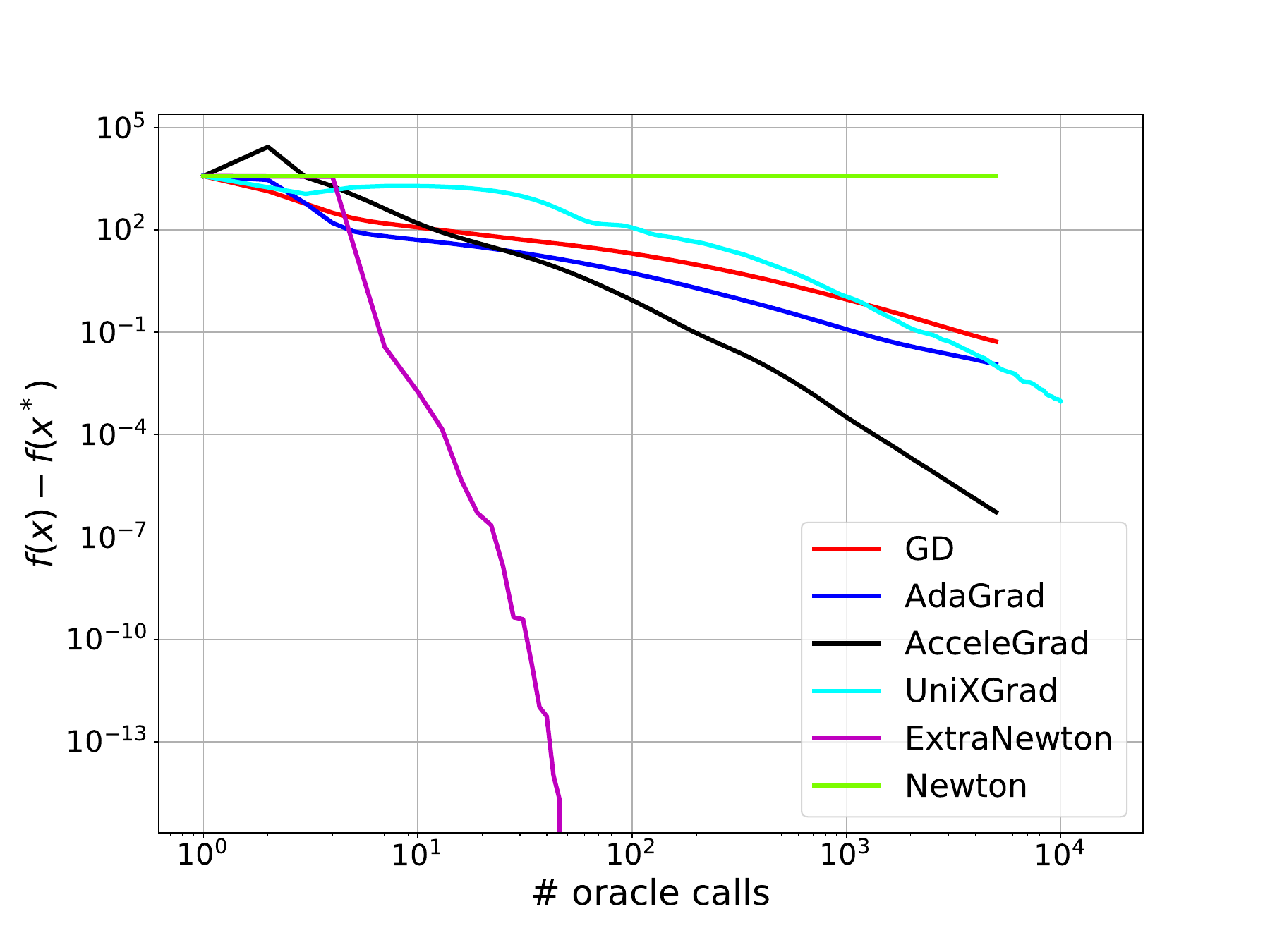}\vspace{-2mm}
         \caption{Least-squares, \texttt{\textbf{w1a}}}
         \label{subfig:app-leastsq-w1a}
     \end{subfigure}
     \begin{subfigure}{0.48\textwidth}
         \centering
         \includegraphics[width=\textwidth]{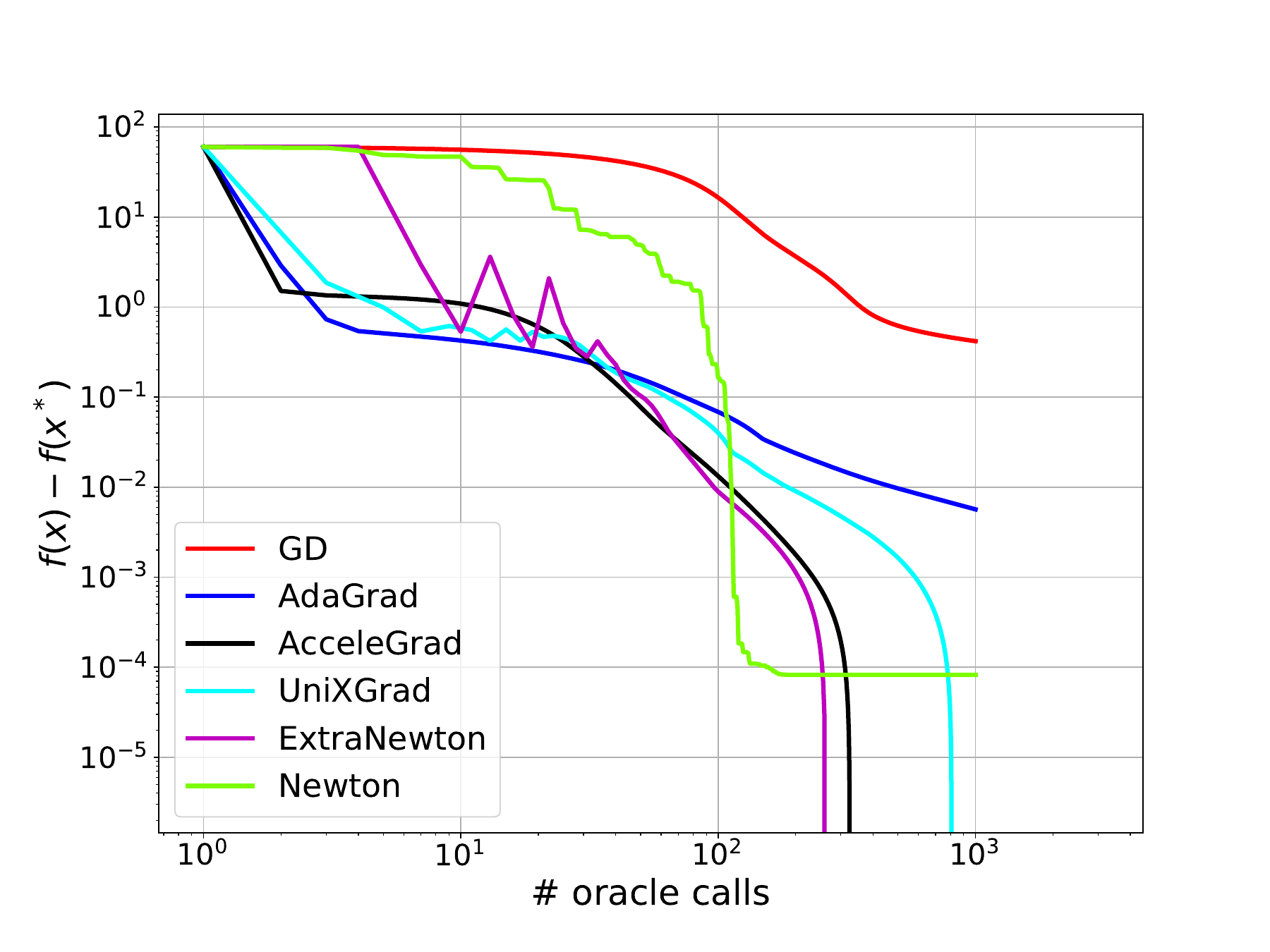}\vspace{-2mm}
         \caption{Logistic regression, \texttt{\textbf{w1a}}}
         \label{subfig:app-logreg-w1a}
     \end{subfigure}
        \caption{Comparison of value convergence for regression problems with \textbf{deterministic} oracle access}
        \label{fig:app-determinisitic}
\end{figure}

We observe that the main advantage of our approach, and in general that of second-order methods, becomes apparent when the problem at hand has a compatible structure such as least-squares. Intuitively, second-order methods should benefit when the cost of computing the Hessian is comparable to gradient computation. In fact, quadratic problems like least-squares yield a constant Hessian for any point in the domain, granting a significant advantage to second-order methods. We exemplify this behavior for least-squares problem with deterministic oracles. With \texttt{w1a} dataset, we couldn't get Newton's method to converge once again. On the contrary, our method shows significant performance upgrade compared to first-order methods while converging consistently in all our trials.

Finally, we have the experiments under stochastic oracles. We essentially present these results for two main reasons; to show that our method works seamlessly with stochastic gradients without any modifications, and to demonstrate that \method achieves the $O(1/\sqrt{T})$ rate (same as other methods we compare against) when the gradient information is noisy. We showcase both of these perspectives in Figure~\ref{fig:app-stochastic}.

\begin{figure}[H]
\centering
     \begin{subfigure}{0.48\textwidth}
         \centering
         \includegraphics[width=\textwidth]{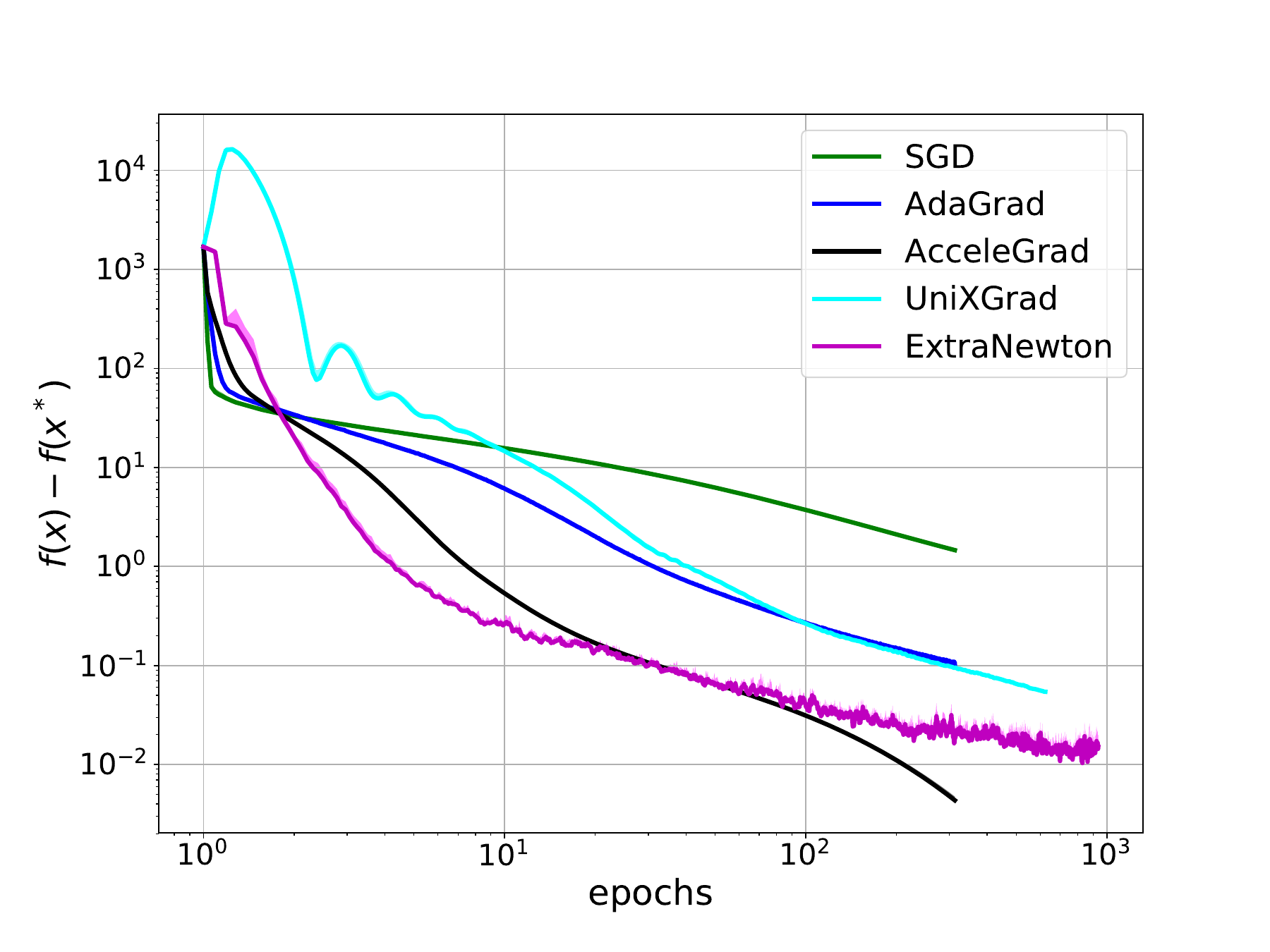}\vspace{-2mm}
         \caption{Least-squares, \texttt{\textbf{a1a}}}
         \label{subfig:app-leastsq-a1a-stoch}
     \end{subfigure}
     \begin{subfigure}{0.48\textwidth}
         \centering
         \includegraphics[width=\textwidth]{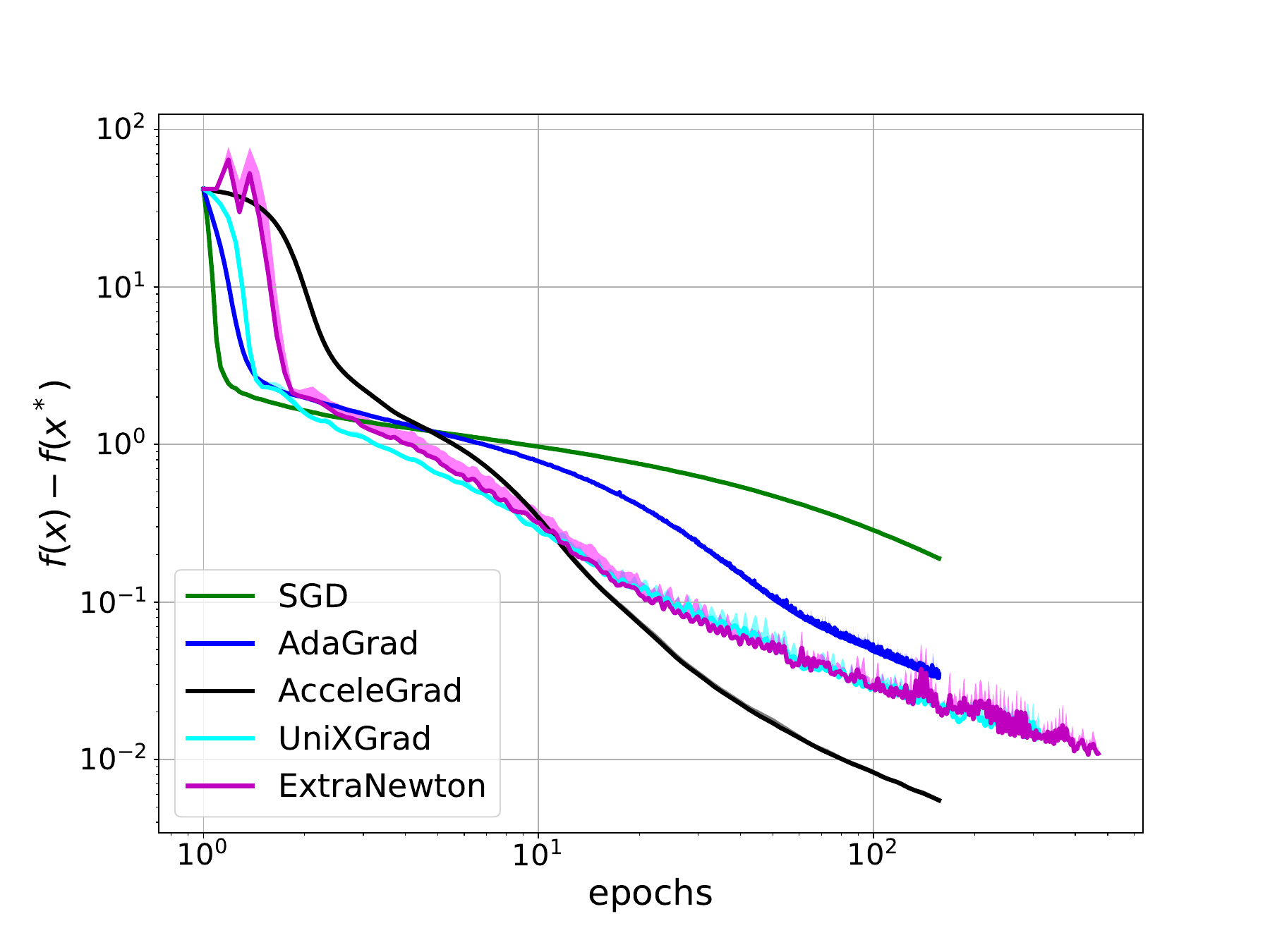}\vspace{-2mm}
         \caption{Logistic regression, \texttt{\textbf{a1a}}}
         \label{subfig:app-logreg-a1a-stoch}
     \end{subfigure}
     \begin{subfigure}{0.48\textwidth}
         \centering
         \includegraphics[width=\textwidth]{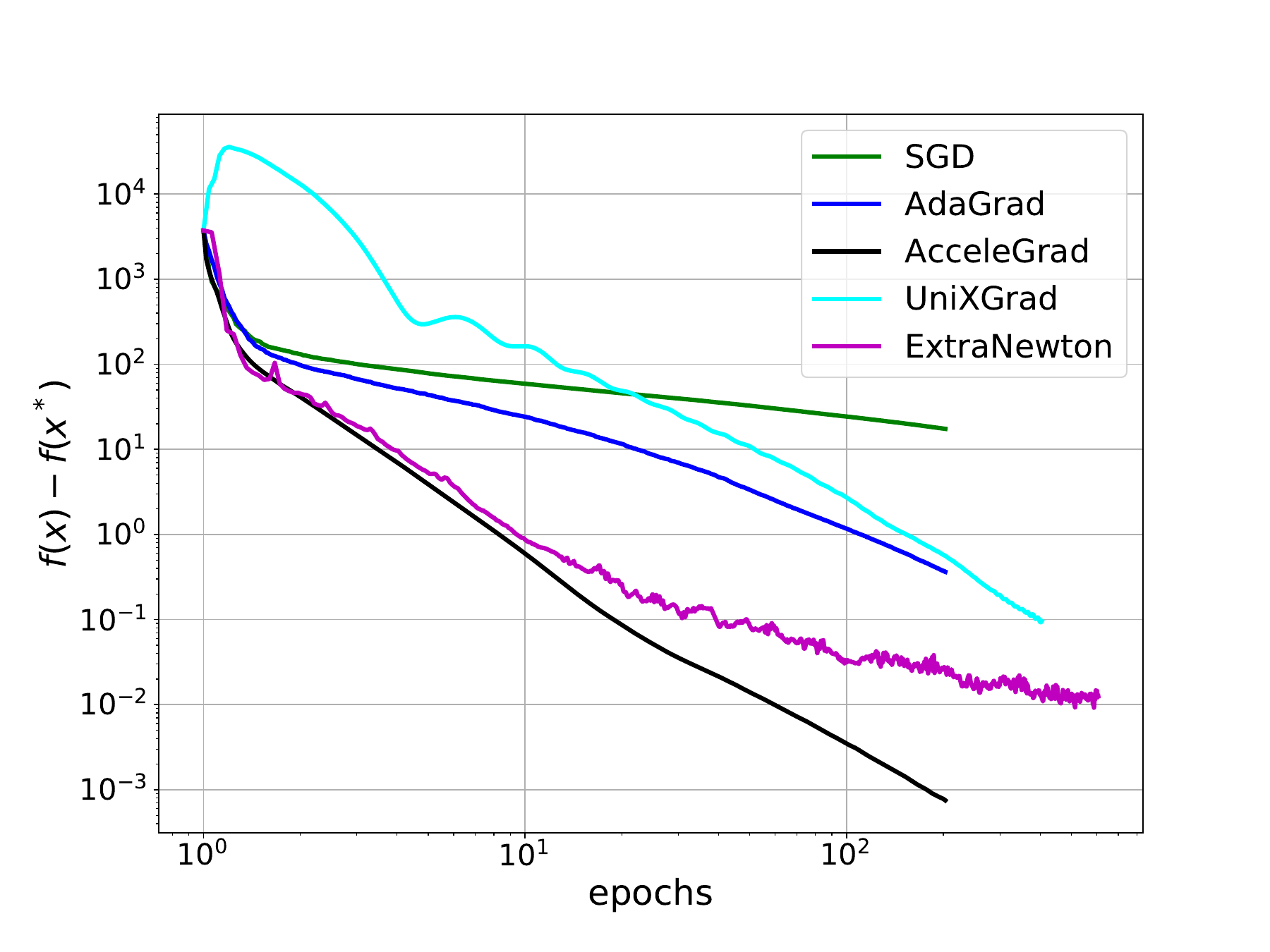}\vspace{-2mm}
         \caption{Least-squares, \texttt{\textbf{w1a}}}
         \label{subfig:app-leastsq-w1a-stoch}
     \end{subfigure}
     \begin{subfigure}{0.48\textwidth}
         \centering
         \includegraphics[width=\textwidth]{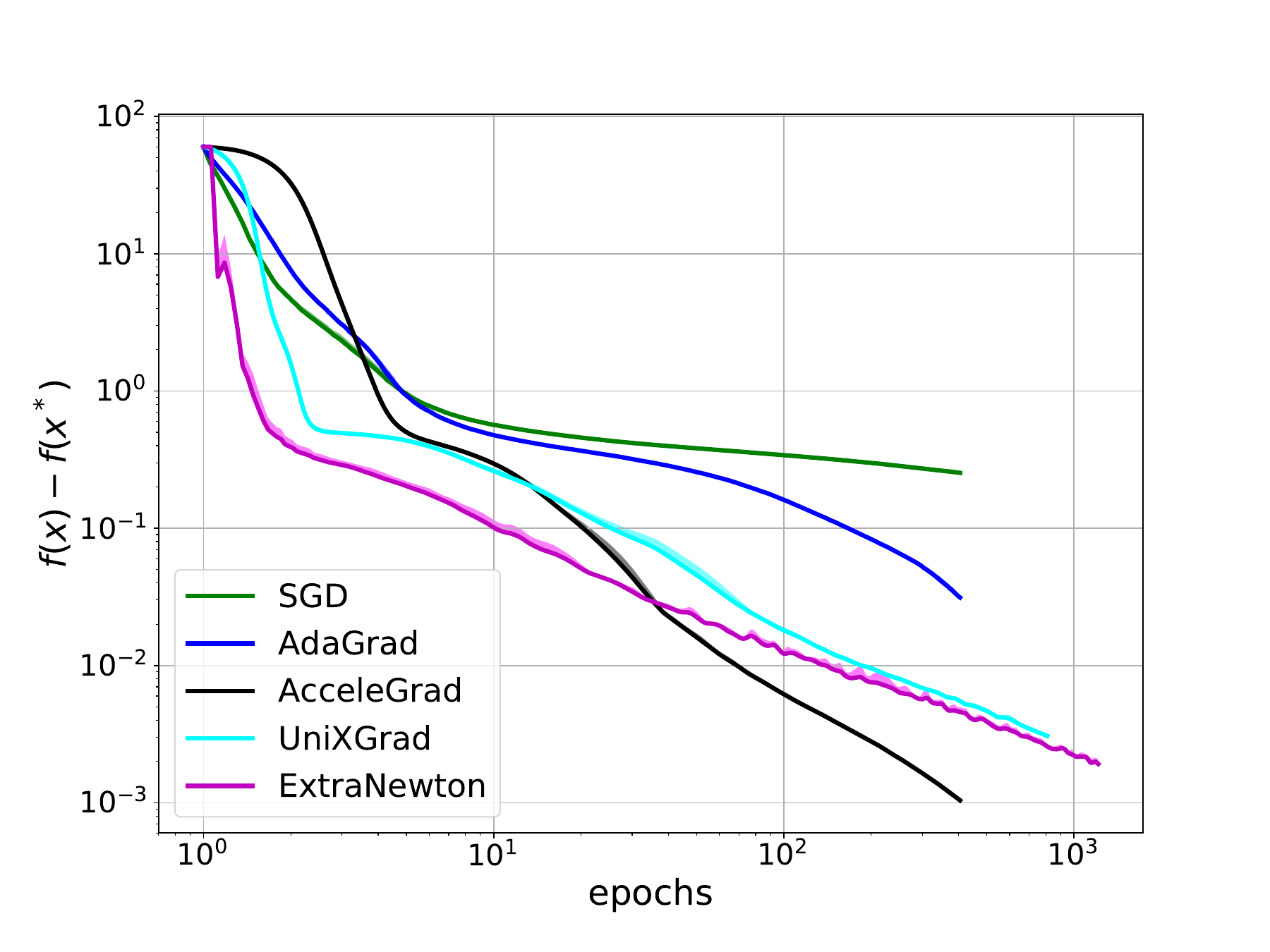}\vspace{-2mm}
         \caption{Logistic regression, \texttt{\textbf{w1a}}}
         \label{subfig:app-logreg-w1a-stoch}
     \end{subfigure}
        \caption{Comparison of value convergence for regression problems with \textbf{stochastic} oracle access}
        \label{fig:app-stochastic}
\end{figure}

\section{Generalized Online-to-batch Conversion (\thmref{thm:conversion})} \label{app:conversion}

In this section we present the online-to-batch conversion scheme which connects 
the optimality gap $\obj(\lastlead[\stateavg])-\obj(\sol)$ with the "weighted" regret $\Reg(\sol) = \sum_{\run=1}^{\nRuns}\curr[\wgrad] \ip{ \nabla \obj (\lead[\stateavg]) }{ \lead - \sol } $.

\begin{reptheorem}{thm:conversion}
    Let $\Reg(\sol[\point])$ denote the anytime regret for the decision sequence $\{ \lead \}_{\run = \start}^{\nRuns}$ as in \eqref{eq:regret}, and define two sequences of non-decreasing weights $\curr[\wgrad]$ and  $\curr[\wavg]$ such that $\curr[\wgrad], \curr[\wavg] \geq 1$. As long as $ \curr[\wgrad] / \curr[\wavg]$ is ensured to be non-increasing, 
    \begin{align*}
        f(\lastlead[\stateavg]) - f(\sol[\point]) \leq \frac{\Reg(\sol[\point])}{ \last[\wgrad] \frac{\last[\wavgsum]}{\last[\wavg]}}
    \end{align*}
\end{reptheorem}
\begin{proof}
First, recall the definition of the offline regret:
\begin{align*}
    \Reg(\sol[\point]) = \sum_{\run=\start}^{\nRuns} \curr[\wgrad] \ip{\nabla \obj(\lead[\stateavg]}{\lead - \sol[\point]}
\end{align*}
Devising our analysis in the spirit of \citep{cutkosky2019anytime, kavis2019universal}, we need to relate $\lead$ to the average iterate $\lead[\stateavg]$ in order to exploit the convexity of the objective function. Notice that we could write the iterate $\lead$ as the difference of consecutive \textit{average} iterates,
\begin{align} \label{def:A-t}
	\curr[\wgrad] \lead = \curr[\wgrad] \frac{\curr[\wavgsum]}{\curr[\wavg]} \lead[\stateavg] - \curr[\wgrad] \frac{\prev[\wavgsum]}{\curr[\wavg]} \beforelead[\stateavg].
\end{align}
Also, we could subsequently express $\curr[\wgrad] \sol[\point] = \curr[\wgrad] \frac{\curr[\wavgsum]}{\curr[\wavg]} \sol[\point] - \curr[\wgrad] \frac{\prev[\wavgsum]}{\curr[\wavg]} \sol[\point]$. Combining them together,
\begin{align*}
	\Reg(\sol[\point]) &= \sum_{\run=\start}^{\nRuns} \curr[\wgrad] \ip{\nabla \obj(\lead[\stateavg]}{\lead - \sol[\point]}\\
	&= \sum_{\run=\start}^{\nRuns} \curr[\wgrad] \frac{\curr[\wavgsum]}{\curr[\wavg]} \ip{\nabla \obj(\lead[\stateavg])}{ \lead[\stateavg] - \sol[\point] } - \curr[\wgrad] \frac{\prev[\wavgsum]}{\curr[\wavg]} \ip{\nabla \obj (\lead[\stateavg]}{ \beforelead[\stateavg] - \sol[\point] } \\
	&= \sum_{\run=\start}^{\nRuns} \curr[\wgrad] \ip{\nabla \obj (\lead[\stateavg])}{ \lead[\stateavg] - \sol[\point] } + \curr[\wgrad] \frac{\prev[\wavgsum]}{\curr[\wavg]} \ip{\nabla \obj(\lead[\stateavg])}{ \lead[\stateavg] - \beforelead[\stateavg] } \\
\end{align*}
where we added and subtracted $\curr[\wgrad] \frac{\prev[\wavgsum]}{\curr[\wavg]} \ip{\nabla \obj(\lead[\stateavg])}{ \lead[\stateavg] }$ to obtain the second equality. Having expressed both inner products in the form we want, we could apply convexity and telescope.
\begin{align*}
	\sum_{\run=\start}^{\nRuns} \curr[\wgrad] &\ip{\nabla \obj(\lead[\stateavg]}{\lead - \sol[\point]}\\
	&\geq \sum_{\run=\start}^{\nRuns} \curr[\wgrad] \br{ \obj ( \lead[\stateavg] ) - \obj ( \sol[\point] ) } + \curr[\wgrad] \frac{\prev[\wavgsum]}{\curr[\wavg]} \br{ \obj ( \lead[\stateavg] ) - \obj ( \beforelead[\stateavg] ) } \\
	&= \sum_{\run=\start}^{\nRuns} \curr[\wgrad] \br{ \obj ( \lead[\stateavg] ) - \obj ( \sol[\point] ) } + \curr[\wgrad] \frac{\prev[\wavgsum]}{\curr[\wavg]} \br{ \obj ( \lead[\stateavg] ) - \obj ( \sol[\point] ) } - \curr[\wgrad] \frac{\prev[\wavgsum]}{\curr[\wavg]} \br{ \obj ( \beforelead[\stateavg] ) - \obj ( \sol[\point] ) } \\
	&= \sum_{\run=\start}^{\nRuns} \curr[\wgrad] \frac{\curr[\wavgsum]}{\curr[\wavg]} \br{ \obj ( \lead[\stateavg] ) - \obj ( \sol[\point] ) } - \curr[\wgrad] \frac{\prev[\wavgsum]}{\curr[\wavg]} \br{ \obj ( \beforelead[\stateavg] ) - \obj ( \sol[\point] ) } \\
	&= \last[\wgrad] \frac{\last[\wavgsum]}{\last[\wavg]} \br{ \obj ( \lastlead[\stateavg] ) - \obj ( \sol[\point] ) } - \init[\wgrad] \frac{\beforeinit[\wavgsum]}{\init[\wavg]} \br{ \obj( \stateavg_{-1/2} ) - \obj( \sol[\point] ) } + \sum_{\run=\start}^{\nRuns-1} \curr[\wavgsum] \br{ \frac{\curr[\wgrad]}{\curr[\wavg]} - \frac{\next[\wgrad]}{\next[\wavg]} }  \br{ \obj ( \lead[\stateavg] ) - \obj ( \sol[\point] ) }
\end{align*}
Setting $\beforeinit[\wavgsum] = 0$ eliminates the second term. To conclude the proof, we need to show that the summation term in the above expression is always non-negative. This is ensured when the sequence $\frac{\curr[\wgrad]}{\curr[\wavg]}$ is monotonically non-increasing, which is specified in the theorem statement (and subsequently satisfied by the algorithms). 
Hence,
\begin{align*}
	\sum_{\run=\start}^{\nRuns} \curr[\wgrad] &\ip{\nabla \obj(\lead[\stateavg]}{\lead - \sol[\point]}\\
	&= \last[\wgrad] \frac{\last[\wavgsum]}{\last[\wavg]} \br{ \obj ( \lastlead[\stateavg] ) - \obj ( \sol[\point] ) } + \sum_{\run=\start}^{\nRuns-1} \curr[\wavgsum] \br{ \frac{\curr[\wgrad]}{\curr[\wavg]} - \frac{\next[\wgrad]}{\next[\wavg]} }  \br{ \obj ( \lead[\stateavg] ) - \obj ( \sol[\point] ) }\\
	&\geq \last[\wgrad] \frac{\last[\wavgsum]}{\last[\wavg]} \br{ \obj ( \lastlead[\stateavg] ) - \obj ( \sol[\point] ) }.
\end{align*}
Rearranging the terms gives us the final result
\begin{align*}
	\obj ( \lastlead[\stateavg] ) - \obj ( \sol[\point] ) \leq \frac{ \sum_{\run=\start}^{\nRuns} \curr[\wgrad] \ip{\nabla \obj(\lead[\stateavg])}{\lead - \sol[\point]} }{ \last[\wgrad]\frac{\last[\wavgsum]}{\last[\wavg]} } = \frac{ \Reg(\sol[\point]) }{ \last[\wgrad]\frac{\last[\wavgsum]}{\last[\wavg]} }.
\end{align*}

\end{proof}

\section{Template Regret Bound (Proposition~\ref{prop:template-inequality})} \label{app:template-inequality}

In this section, we will prove the template inequality in Proposition~\ref{prop:template-inequality} in the case of stochastic oracles. This inequality will give us the main departure point for both \thmref{thm:implicit} and \thmref{thm:explicit}. We will prove a corollary of the following result later on, specifically for the deterministic setup, which will follow the same steps as Proposition~\ref{prop:template-inequality}.

For ease of navigation, we present \method once more.
\begin{algorithm}[H]
	\caption*{\method} \label{alg:app-explicit}
	\textbf{Input}: $\init[\state] \in \compact$, $\,\curr[\wgrad] = t^2$ and $A_t= \sum_{s=1}^{t} \iter[\wgrad]$, $\,\curr[\wavg] = t^p$ ($p\geq2$) and $B_t= \sum_{s=1}^{t} \iter[\wavg]$, $\,\gamma > 0$, $\xi_t \sim$ i.i.d.\\[-3mm]
	\begin{algorithmic}[1]
    	\FOR {$t=1$ to $T$}
                \smallskip
                \STATE $~~~~~~\curr[\stepalt] = \dfrac{\step}{\sqrt{ \stepscale + \sum_{\runalt=\start}^{\run-1} \iter[\wgrad]^2 \norm{\sgrad(\iterlead[\stateavg], \xi_{s + \frac{1}{2}}) - \Fs(\iterlead[\stateavg]; \iter[\stateopt], \xi_s)}^2 }}$
     		\smallskip
    		\STATE $\lead = \argmin_{\point \in \compact} \ip{ \curr[\wgrad] \sgrad (\curr[\stateopt], \xi_t) }{ \point } + \frac{\curr[\wgrad] \curr[\wavg] }{2 \curr[\wavgsum]} \ip{\shess ( \curr[\stateopt] , \xi_t) ( \point - \curr ) }{\point - \curr} + \frac{1}{2 \curr[\stepalt]} \norm{ \point - \curr }^2$
    		\STATE $\, \next= \argmin_{\point \in \compact} \ip{ \curr[\wgrad] \sgrad (\lead[\stateavg], \xi_{t+\frac{1}{2}}) }{ \point} + \frac{1}{2 \curr[\stepalt]} \norm{\point - \curr}^2$
    		\smallskip
    	\ENDFOR
    \end{algorithmic}
\end{algorithm}

\begin{repproposition}{prop:template-inequality}
    Let $\{ \lead \}_{\run = \start}^{\nRuns}$ be generated by \algref{alg:explicit}, run with a non-increasing step-size sequence $\curr[\stepalt]$ and non-decreasing sequences of weights $\curr[\wgrad], \curr[\wavg] \geq 1$ such that $\curr[\wgrad] / \curr[\wavg]$ is also non-increasing. Then, the following guarantee holds:
    \begin{align*}
        \mathbb E \Reg(\sol[\point]) \leq \frac{1}{2} \Expect{ \frac{3\eucdiam^2}{\gamma_{T+1}} + \sum_{\run=\start}^\nRuns \next[\stepalt] \curr[\wgrad]^2 \norm{\sgrad(\lead[\stateavg], \lead[\xi]) - \Fs(\lead[\stateavg]; \curr[\stateopt], \curr[\xi])}^2 - \frac{\norm{\lead - \curr}^2}{\next[\stepalt]} }
    \end{align*}
\end{repproposition}
\begin{proof}
     We take off from the optimality conditions associated with each update sequence for our explicit algorithm \method (\algref{alg:explicit}). Optimality condition for $\lead$ implies for any $z_0 \in \compact$,
    \begin{equation} \label{eq:optimality-1}
    \begin{aligned}
        &\ip{ \curr[\wgrad] \sgrad( \curr[\stateopt], \curr[\xi] ) + \curr[\wgrad] \frac{\curr[\wavg]}{\curr[\wavgsum]} \shess( \curr[\stateopt], \curr[\xi]) (\lead - \curr) }{ \lead - z_0 } \\
        &= 
        \ip{ \curr[\wgrad] \sgrad( \curr[\stateopt], \curr[\xi] ) + \curr[\wgrad] \shess( \curr[\stateopt], \curr[\xi]) (\lead[\stateavg] - \curr[\stateopt]) }{ \lead - z_0 } \\
        &= 
        \ip{ \curr[\wgrad] \Fs(\lead[\stateavg]; \curr[\stateopt], \curr[\xi]) }{ \lead - z_0 } \\
        &\leq 
        \frac{1}{\curr[\stepalt]} \ip{ \lead - \curr }{ z_0 - \lead }\\
        &= 
        \frac{1}{2\curr[\stepalt]} \br{ \norm{\curr - z_0}^2 - \norm{\lead - z_0}^2 - \norm{\lead - \curr}^2 }
    \end{aligned}
    \end{equation}
    Similarly, optimality of $\next$ update yields for any $z_1 \in \mathcal X$,
	\begin{equation} \label{eq:optimality-2}
    \begin{aligned}
		\ip{ \curr[\wgrad] \sgrad(\lead[\stateavg], \lead[\xi]) }{ \next - z_1 } &\leq \frac{1}{2 \curr[\stepalt]} \ip{ \next - \curr }{ z_1 - \next }\\
		&= \frac{1}{2\curr[\stepalt]} \br{ \norm{ \curr - z_1 }^2 - \norm{ \next - z_1 }^2 - \norm{ \next - \curr}^2 }
	\end{aligned}
    \end{equation}
    First, we will set $z_1 = \sol[\point]$ to establish the telescoping summation over $\norm{ \curr - \sol[\point] }^2 - \norm{ \next - \sol[\point] }^2$. Then, we will simply align the above expression with the regret as follows,
    \begin{equation} \label{eq:regret-intermediate}
    \begin{aligned}
        \langle \curr[\wgrad] \sgrad(\lead[\stateavg], \lead[\xi]), &\,\lead - x^\star \rangle\\
        &= \ip{ \curr[\wgrad] \sgrad(\lead[\stateavg], \lead[\xi]) }{ \lead - \next } + \ip{ \curr[\wgrad] \sgrad(\lead[\stateavg], \lead[\xi]) }{ \next - x^\star }\\
        &\leq \ip{ \curr[\wgrad] \sgrad(\lead[\stateavg], \lead[\xi]) }{ \lead - \next }\\
        &\quad+ \frac{1}{2\curr[\stepalt]} \br{ \norm{ \curr - x^\star }^2 - \norm{ \next - x^\star }^2 - \norm{ \next -  \curr}^2 }
    \end{aligned}
    \end{equation}
    Now, observe that setting $z_0 = \next$ in Eq.~\eqref{eq:optimality-1} and rearranging we have
    \begin{align*}
        &- \frac{1}{2\curr[\stepalt]} \norm{\next - \curr}^2\\ 
        &\leq - \ip{ \curr[\wgrad] \Fs(\lead[\stateavg]; \curr[\stateopt], \curr[\xi]) }{ \lead - \next } - \frac{1}{2\curr[\stepalt]} \br{ \norm{\lead - \next}^2 + \norm{\lead - \curr}^2 }
    \end{align*}
    Plugging the above expression into Eq.~\eqref{eq:regret-intermediate} and summing over $t=1, ..., T$, we will obtain,
	\begin{align*}
	    \sum_{\run=\start}^\nRuns &\langle \curr[\wgrad] \sgrad(\lead[\stateavg], \lead[\xi]), \lead - x^\star \rangle\\
	    &\leq \sum_{\run=\start}^\nRuns \curr[\wgrad] \ip{ \sgrad(\lead[\stateavg], \lead[\xi]) - \Fs(\lead[\stateavg]; \curr[\stateopt], \curr[\xi]) }{ \lead - \next }\\
	    &\quad+ \sum_{\run=\start}^\nRuns \frac{1}{2\curr[\stepalt]} \br{ \norm{ \curr - x^\star }^2 - \norm{ \next - x^\star }^2 - \norm{\lead - \next}^2 - \norm{\lead - \curr}^2 }
	\end{align*}
	First off, we bound the inner product term using Cauchy-Schwarz and a slight generalization of Young's inequality \citep{rakhlin2013optimization}
	\begin{align*}
			\sum_{t=1}^{T}&\, \curr[\wgrad] \ip{\sgrad(\lead[\stateavg], \lead[\xi]) - \Fs(\lead[\stateavg]; \curr[\stateopt], \curr[\xi])}{\lead - \next}\\
			&\leq \sum_{t=1}^{T} \curr[\wgrad] \norm{\sgrad(\lead[\stateavg], \lead[\xi]) - \Fs(\lead[\stateavg]; \curr[\stateopt], \curr[\xi])} \norm{\lead - \next} \\
			&\leq \frac{1}{2} \sum_{t=1}^{T} \next[\stepalt] \curr[\wgrad]^2 \norm{\sgrad(\lead[\stateavg], \lead[\xi]) - \Fs(\lead[\stateavg]; \curr[\stateopt], \curr[\xi])}^2 + \frac{1}{\next[\stepalt]} \norm{\lead - \next}^2.
	\end{align*}
	We merge the expressions together,
	\begin{align*}
	    \sum_{\run=\start}^\nRuns &\langle \curr[\wgrad] \sgrad(\lead[\stateavg], \lead[\xi]), \lead - x^\star \rangle\\
	    &\leq \frac{1}{2} \sum_{\run=\start}^\nRuns \next[\stepalt] \curr[\wgrad]^2 \norm{\sgrad(\lead[\stateavg], \lead[\xi]) - \Fs(\lead[\stateavg]; \curr[\stateopt], \curr[\xi])}^2 + \frac{1}{\next[\stepalt]} \norm{\lead - \next}^2\\
	    &\quad+ \sum_{\run=\start}^\nRuns \frac{1}{2\curr[\stepalt]} \br{ \norm{ \curr - x^\star }^2 - \norm{ \next - x^\star }^2 - \norm{\lead - \next}^2 - \norm{\lead - \curr}^2 }
	\end{align*}
    It is important that we invoke generalized Young's inequality with step-size at time $t+1$. Since the step-size lags one iteration behind, $\curr[\stepalt]$ does not include $\norm{\sgrad(\lead[\stateavg], \lead[\xi]) - \Fs(\lead[\stateavg]; \curr[\stateopt], \curr[\xi])}^2$ and this would pose some problems in the later stages of the proof. Hence, we add/subtract $\frac{1}{\next[\stepalt]} \norm{\lead - \curr}^2$ and regroup the terms,
    \begin{align*}
        \sum_{\run=\start}^\nRuns &\langle \curr[\wgrad] \sgrad(\lead[\stateavg], \lead[\xi]), \lead - x^\star \rangle\\
	    &\leq \frac{1}{2} \sum_{\run=\start}^\nRuns \next[\stepalt] \curr[\wgrad]^2 \norm{\sgrad(\lead[\stateavg], \lead[\xi]) - \Fs(\lead[\stateavg]; \curr[\stateopt], \curr[\xi])}^2 - \frac{1}{\next[\stepalt]} \norm{\lead - \curr}^2\\
	    &\quad+ \frac{1}{2} \sum_{\run=\start}^\nRuns \br{ \frac{1}{\next[\stepalt]} - \frac{1}{\curr[\stepalt]} } \br{ \norm{\lead - \next}^2 + \norm{\lead - \curr}^2 } \\
	    &\quad+ \frac{1}{2} \sum_{\run=\start}^\nRuns \frac{1}{\curr[\stepalt]} \br{ \norm{ \curr - x^\star }^2 - \norm{ \next - x^\star }^2 } \\
	    &\leq \frac{1}{2} \sum_{\run=\start}^\nRuns \next[\stepalt] \curr[\wgrad]^2 \norm{\sgrad(\lead[\stateavg], \lead[\xi]) - \Fs(\lead[\stateavg]; \curr[\stateopt], \curr[\xi])}^2 - \frac{1}{\next[\stepalt]} \norm{\lead - \curr}^2\\
	    &\quad+ \frac{\norm{\init - \sol[\point]}^2}{2 \gamma_1} +  \frac{1}{2} \sum_{t=1}^{T-1} \br{ \frac{1}{\next[\stepalt]} - \frac{1}{\curr[\stepalt]} } \norm{ \next - x^\star }^2 + \eucdiam^2 \sum_{\run=\start}^\nRuns \br{ \frac{1}{\next[\stepalt]} - \frac{1}{\curr[\stepalt]} } \\
	    &\leq \frac{3\eucdiam^2}{2 \gamma_{T+1}} + \frac{1}{2} \sum_{\run=\start}^\nRuns \next[\stepalt] \curr[\wgrad]^2 \norm{\sgrad(\lead[\stateavg], \lead[\xi]) - \Fs(\lead[\stateavg]; \curr[\stateopt], \curr[\xi])}^2 - \frac{1}{2} \sum_{\run=\start}^\nRuns \frac{1}{\next[\stepalt]} \norm{\lead - \curr}^2
    \end{align*}
    where we have rewritten the telescoping summation for $\norm{ \curr - x^\star }^2 - \norm{ \next - x^\star }^2$ and used that $\eucdiam^2 = \sup_{x,y \in \compact} \norm{x - y}^2 $ (diameter of the constraint set) to obtain the second inequality. The final line follows from telescoping the summations, plugging in the diameter $\eucdiam$ and rearranging the resulting terms.
    
    Now, what remains is to obtain the (expected) regret from $\sum_{\run=\start}^\nRuns \langle \curr[\wgrad] \sgrad(\lead[\stateavg], \lead[\xi]), \lead - x^\star \rangle$. Recall the definitions of $\curr[\mathcal F] = \sigma( 
\xi_1, \xi_{1 + \frac{1}{2}}, \cdots, \curr[\xi] )$ and $\lead[\mathcal F] = \sigma( \xi_1, \xi_{1 + \frac{1}{2}}, \cdots, \curr[\xi], \lead[\xi] )$ from Table~\ref{tbl:app_notation}. Taking expectation over all randomness,
	\begin{align*}
	    &\mathbb E \Bigg[\sum_{\run=\start}^\nRuns\langle \curr[\wgrad] \sgrad(\lead[\stateavg], \lead[\xi]), \lead - x^\star \rangle \Bigg]\\
	    &= 
	    \Expect{ \sum_{\run=\start}^\nRuns\curr[\wgrad] \langle \sgrad(\lead[\stateavg], \lead[\xi]) - \nabla \obj( \lead[\stateavg] ), \lead - x^\star \rangle + \curr[\wgrad] \langle \nabla \obj( \lead[\stateavg] ), \lead - x^\star \rangle } \\
	    &= 
	    \Expect{ \sum_{\run=\start}^\nRuns \Expect{ \curr[\wgrad] \langle \sgrad(\lead[\stateavg], \lead[\xi]) - \nabla \obj( \lead[\stateavg] ), \lead - x^\star \rangle \mid \curr[\mathcal F] } }\\
	    &\quad+ \Expect{ \sum_{\run = \start}^{\nRuns} \curr[\wgrad] \langle \nabla \obj( \lead[\stateavg] ), \lead - x^\star \rangle } \\
	    &= 
	    \Expect{ \sum_{\run=\start}^\nRuns \curr[\wgrad] \langle \Expect{ \sgrad(\lead[\stateavg], \lead[\xi]) \mid \curr[\mathcal F] } - \nabla \obj( \lead[\stateavg] ), \lead - x^\star \rangle }\\
	    &\quad+ \Expect{ \sum_{\run = \start}^{\nRuns} \curr[\wgrad] \langle \nabla \obj( \lead[\stateavg] ), \lead - x^\star \rangle } \\
	    &= 
	    \Expect{ \sum_{\run = \start}^{\nRuns} \curr[\wgrad] \langle \nabla \obj( \lead[\stateavg] ), \lead - x^\star \rangle }
	\end{align*}
	We used towering property of expectation (equivalently total law of expectation) to have the second inequality, and the last line from the unbiasedness assumption of gradient oracles in Eq.~\eqref{eq:stochastic-oracle} such that $\Expect{ \sgrad(\lead[\stateavg], \lead[\xi]) \mid \curr[\mathcal F] } = \nabla \obj (\lead[\stateavg])$. Hence, we obtain that
	\begin{align*}
	    \Expect{ \Reg(\sol) } &= \Expect{ \sum_{\run = \start}^{\nRuns} \curr[\wgrad] \langle \nabla \obj( \lead[\stateavg] ), \lead - x^\star \rangle } \\
	    &= \mathbb E \Bigg[\sum_{\run=\start}^\nRuns\langle \curr[\wgrad] \sgrad(\lead[\stateavg], \lead[\xi]), \lead - x^\star \rangle \Bigg],
	\end{align*}
	which concludes the target result,
	\begin{align*}
	    \Expect{ \Reg(\sol[\point]) } \leq \frac{1}{2} \Expect{ \frac{3\eucdiam^2}{\gamma_{T+1}} + \sum_{\run=\start}^\nRuns \next[\stepalt] \curr[\wgrad]^2 \norm{\sgrad(\lead[\stateavg], \lead[\xi]) - \Fs(\lead[\stateavg]; \curr[\stateopt], \curr[\xi])}^2 - \frac{\norm{\lead - \curr}^2}{\next[\stepalt]} }
	\end{align*}
\end{proof}

\section{Technical Lemma for the Main Proofs} \label{app:numerical}




Before proceeding with the proofs of our main results, we need to establish the following technical result, due to \cite{MS10} and \cite{levy2018online}, which has been commonly used in the analysis of adaptive methods. We make use of it for the proof of \thmref{thm:implicit} and \thmref{thm:explicit}.

\begin{lemma}[\citealp{MS10}, \citealp{levy2018online}]
\label{lem:technical}
For all non-negative numbers $\alpha_{1},\dotsc \alpha_{\run}$, the following inequality holds:
\begin{equation}
\sqrt{\sum_{\run=1}^{\nRuns}\alpha_{\run}}\leq \sum_{\run=1}^{\nRuns}\dfrac{\alpha_{\run}}{\sqrt{\sum_{i=1}^{\run}\alpha_{i}}}\leq 2\sqrt{\sum_{\run=1}^{\nRuns}\alpha_{\run}}
\end{equation}
\end{lemma}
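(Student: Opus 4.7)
The plan is to handle the two inequalities separately, as the lower bound is essentially trivial while the upper bound requires a telescoping argument via a clever algebraic identity. Throughout, let $S_t \defeq \sum_{i=1}^t \alpha_i$ denote the partial sums so that the claim becomes
\begin{equation*}
\sqrt{S_T} \;\leq\; \sum_{t=1}^T \frac{\alpha_t}{\sqrt{S_t}} \;\leq\; 2\sqrt{S_T}.
\end{equation*}
We may assume without loss of generality that all $\alpha_t > 0$; otherwise the corresponding summands are zero (with the convention $0/0 = 0$) and can be dropped from both sides simultaneously.

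For the lower bound, I would simply observe that $S_t \leq S_T$ for every $t \leq T$, hence $1/\sqrt{S_t} \geq 1/\sqrt{S_T}$, and summing yields
\begin{equation*}
\sum_{t=1}^T \frac{\alpha_t}{\sqrt{S_t}} \;\geq\; \frac{1}{\sqrt{S_T}}\sum_{t=1}^T \alpha_t \;=\; \sqrt{S_T}.
\end{equation*}

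For the upper bound, the key step (and the only nonobvious move) is the identity
\begin{equation*}
\sqrt{S_t} - \sqrt{S_{t-1}} \;=\; \frac{S_t - S_{t-1}}{\sqrt{S_t} + \sqrt{S_{t-1}}} \;=\; \frac{\alpha_t}{\sqrt{S_t} + \sqrt{S_{t-1}}},
\end{equation*}
valid for $t \geq 1$ with the convention $S_0 = 0$. Since $\sqrt{S_t} + \sqrt{S_{t-1}} \leq 2\sqrt{S_t}$, this gives $\alpha_t/\sqrt{S_t} \leq 2(\sqrt{S_t} - \sqrt{S_{t-1}})$. Summing over $t=1,\dotsc,T$ telescopes to $2\sqrt{S_T}$, completing the proof.

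The main (mild) obstacle is handling the $t=1$ term, which would formally produce a $0/0$ expression if one naively wrote $\sqrt{S_t}+\sqrt{S_{t-1}}$ with $S_0 = 0$; this is resolved by noting that at $t=1$ the identity reduces to $\sqrt{S_1} = \sqrt{\alpha_1}$ and the bound $\alpha_1/\sqrt{S_1} \leq 2\sqrt{S_1}$ holds trivially. Aside from this, both directions are one-line estimates once the telescoping identity is in hand.
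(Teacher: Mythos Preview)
Your proof is correct and is in fact the standard argument for this inequality. The paper does not give its own proof of this lemma; it merely states the result and attributes it to \cite{MS10} and \cite{levy2018online}, so there is nothing to compare against beyond noting that your telescoping argument is precisely the one found in those references.
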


\section{\method: The First Universal Second-order Accelerated Method (\thmref{thm:explicit})} \label{app:explicit}

\begin{reptheorem}{thm:explicit}
     Let $\{ \lead \}_{\run = \start}^{\nRuns}$ be a sequence generated by \algref{alg:explicit}, run with the adaptive step-size policy \eqref{eq:adaptive-stepsize-explicit} and $\curr[\wgrad]=\run^{2},\curr[\wavg]=\run^{p}$ for $p \geq 2$. Assume that $\obj$ satisfies \eqref{eq:Hess-smooth}, and that Assumptions~\eqref{eq:stochastic-oracle} hold
     . Then, the following universal guarantee holds:
    \begin{align*}
        \obj (\lastlead[\stateavg]) - \obj (\sol[\point]) \leq O \br{ \frac{ { \frac{\eucdiam^2 + \stepalt^2}{\gamma} } \vargrad }{\sqrt{\nRuns}} + \frac{ { \frac{\eucdiam^3 + \eucdiam \stepalt^2}{\gamma} } \varhess }{\nRuns^{3/2}} + \frac{ \max \bc{ L {\frac{ \eucdiam^4 + \eucdiam \stepalt^3}{\gamma} }, \sqrt{\stepscale} { \frac{\eucdiam^2 + \stepalt^2}{\stepalt} } } }{\nRuns^3} }
    \end{align*}
    When $\stepalt = \eucdiam$, we obtain the target rate $O \br{ \frac{D \sigma_g}{\sqrt{\nRuns}} + \frac{D^2 \sigma_H}{\nRuns^{3/2}} + \frac{ \max \bc{\smooth \eucdiam^3, \sqrt{\stepscale} \eucdiam} }{\nRuns^3} }$.
\end{reptheorem}
\begin{proof}
    We take Proposition~\ref{prop:template-inequality} as our departure point for the analysis. After proving an offline regret bound, we will use \thmref{thm:conversion} to obtain the optimality gap from the regret bound. Recall the template regret bound,
    \begin{align*}
	    \mathbb E \Reg(\sol[\point]) \leq \frac{1}{2} \Expect{ \frac{3\eucdiam^2}{\gamma_{T+1}} + \sum_{\run=\start}^\nRuns \next[\stepalt] \curr[\wgrad]^2 \norm{\sgrad(\lead[\stateavg], \lead[\xi]) - \Fs(\lead[\stateavg]; \curr[\stateopt], \curr[\xi])}^2 - \frac{\norm{\lead - \curr}^2}{\next[\stepalt]} }
	\end{align*}
    Now, we want to unify the first two terms through numerical inequalities. We will write the \textit{second term in terms of the first term}. Due to \lemref{lem:technical}, we can upper the bound second term as,
    \begin{align*}
        \frac{1}{2} \sum_{\run=\start}^\nRuns \next[\stepalt] \curr[\wgrad]^2 &\norm{\sgrad(\lead[\stateavg], \lead[\xi]) - \Fs(\lead[\stateavg]; \curr[\stateopt], \curr[\xi])}^2 \\
        &= \frac{\gamma}{2} \sum_{\run=\start}^\nRuns \frac{ \curr[\wgrad]^2 \norm{\sgrad(\lead[\stateavg], \lead[\xi]) - \Fs(\lead[\stateavg]; \curr[\stateopt], \curr[\xi])}^2}{ \sqrt{ \stepscale + \sum_{\runalt=\start}^\run \iter[\wgrad]^2 \norm{\sgrad(\iterlead[\stateavg], \iterlead[\xi]) - \Fs(\iterlead[\stateavg]; \iter[\stateopt], \iter[\xi])}^2 } }\\
        &\leq \gamma \sqrt{ \stepscale + \sum_{\run=\start}^\nRuns \curr[\wgrad]^2 \norm{\sgrad(\lead[\stateavg], \lead[\xi]) - \Fs(\lead[\stateavg]; \curr[\stateopt], \curr[\xi])}^2 } - \frac{\gamma}{2 \sqrt{\stepscale}}
    \end{align*}
    Plugging this back into the original expression gives us
    \begin{align*}
        &\Expect{ \Reg(\sol[\point]) }\\
        &\leq \br{ \frac{3 \eucdiam^2}{2 \gamma} + \gamma } \sqrt{ \stepscale + \sum_{\run=\start}^\nRuns \curr[\wgrad]^2 \norm{\sgrad(\lead[\stateavg], \iter[\xi]) - \Fs(\lead[\stateavg]; \curr[\stateopt], \curr[\xi])}^2 } - \frac{1}{2} \sum_{\run=\start}^\nRuns \frac{1}{\next[\stepalt]} \norm{\lead - \curr}^2
    \end{align*}
    Next up, we will handle the negative term in the above expression.
    As we have discussed in the main text, the key for faster rates beyond $O(1/\nRuns^2)$ is understanding how to manipulate the negative term in the above expression. 
    A crucial part of our analysis is understanding the implications of second-order smoothness and how to unlock its potential. 
    This next derivation will demonstrate how \eqref{eq:Hess-smooth} allows for a more aggressive gradient weighting and in turn faster convergence rate implied by our generalized conversion technique. Next, we will relate the negative term to the positive terms using smoothness and primal averaging, similar to the approaches in \citep{wang2018acceleration, kavis2019universal}.
    \begin{align*}
        -\frac{\norm{\lead - \curr}^2}{\next[\stepalt]}
        &=
        - \frac{\eucdiam^2}{\eucdiam^2 \next[\stepalt]} \norm{ \lead - \curr }^2\\
        &\leq
        - \frac{1}{\eucdiam^2 \next[\stepalt]} \norm{ \lead - \curr }^4\\
        &=
        - \frac{1}{\eucdiam^2 \next[\stepalt]} \frac{\curr[\wavgsum]^4}{\curr[\wavg]^4} \norm{ \frac{\curr[\wavg]}{\curr[\wavgsum]} \lead - \frac{\curr[\wavg]}{\curr[\wavgsum]}\curr }^4\\
        &=
        - \frac{1}{\eucdiam^2 \next[\stepalt]} \frac{\curr[\wavgsum]^4}{\curr[\wavg]^4} \norm{ \frac{\curr[\wavg] \lead + \sum_{\runalt=\start}^{\run-1} \iter[\wavg] \iterlead }{\curr[\wavgsum]}  - \frac{\curr[\wavg] \curr + \sum_{\runalt=\start}^{\run-1} \iter[\wavg] \iterlead }{\curr[\wavgsum]} }^4\\
        &=
        - \frac{1}{\eucdiam^2 \next[\stepalt]} c^4 t^4 \norm{ \lead[\stateavg] - \curr[\stateopt] }^4\\
        &\leq
        - \frac{4 c^4 \run^4}{\smooth^2 \eucdiam^2 \next[\stepalt]} \norm{ \nabla f(\lead[\stateavg]) - \F(\lead[\stateavg]; \curr[\stateopt]) }^2
    \end{align*}
    First, notice that for any sequence $\curr[\wavg] = O(t^p)$ with $p \geq 0$, we have $\curr[\wavgsum] = \sum_{\runalt = \start}^{\run} \iter[\wavg] = O(\run^{p+1})$, which implies $\frac{\curr[\wavgsum]}{\curr[\wavg]} \leq c t$, where $c > 0$ is an absolute constant depending on how $\curr[\wavg]$ is defined. Then, we use the definitions of average sequences $\lead[\stateavg]$ and $\curr[\stateopt]$ to go from $\norm{ \lead - \curr }^4$ to $\norm{ \lead[\stateavg] - \curr[\stateopt] }^4$ to obtain equalities 3-5, and apply smoothness to obtain the last line. On a related note, we want to highlight the importance of optimistic weighted averaging that is central for obtaining the above expression. Since the averaged pairs $\lead[\stateavg]$ and $\curr[\stateopt]$ differ by only the last element, we can seamlessly relate $\norm{ \lead - \curr }$ to $\norm{ \lead[\stateavg] - \curr[\stateopt] }$.
    
    Now, we are at a position to explain how we will go beyond $O(1/\nRuns^2)$ convergence rate, which fundamentally depends on the gradient weights $\curr[\wgrad]$ and jointly relies on our generalized online-to-batch conversion in \thmref{thm:conversion}. The negative term above is monotonically decreasing (increases in magnitude) which is essential to (partially) control the growth of remaining positive term. More specifically, one can notice that in order to align the summands of the positive and negative term, the algebra dictates that we need to select $\curr[\wgrad] = O(\run^2)$, which implies $\curr[\wavg] = \Omega(t^2)$. Notice that our averaging and weighting parameters grow at least $O(\run)$ faster than the existing accelerated schemes for first-order smoothness, which grants the improved $O(1/\nRuns^3)$ rate. On the contrary, first-order smoothness would only allow $\run^2$ factor in front of the norm, leading to the slower rate.

    Due to (margin-wise) space constraints, we will use a slightly more compact notation for certain expressions. Let us first define a shorthand notation for noise in gradient and Hessian evaluations, respectively.
    \begin{equation} \label{eq:noise-variables}
    \begin{aligned}
        \curr[\eps] &= [\sgrad(\lead[\stateavg], \lead[\xi]) - \sgrad(\curr[\stateopt], \curr[\xi])] - [\nabla \obj(\lead[\stateavg]) - \nabla \obj(\curr[\stateopt])]\\
        \curr[\delta] &= \shess(\curr[\stateopt], \curr[\xi]) - \nabla^2 \obj(\curr[\stateopt])
    \end{aligned}
    \end{equation}
    Then, we define following deterministic/stochastic placeholders:
    \begin{equation} \label{eq:grad-shorthand}
        \begin{aligned}
            \curr[\nabla] &= \nabla \obj(\lead[\stateavg]) - \F(\lead[\stateavg], \curr[\stateopt])\\
            \curr[\tnabla] &= \sgrad (\lead[\stateavg], \lead[\xi]) - \Fs(\lead[\stateavg], \curr[\stateopt], \curr[\xi]) = \curr[\nabla] + \curr[\eps] - \curr[\delta] (\lead[\stateavg] - \curr[\stateopt])
        \end{aligned}
    \end{equation}
    Setting $\curr[\wgrad] = \run^2$, combining all the terms and introducing the compact notation,
    \begin{align*}
        &\sum_{\run=\start}^\nRuns \langle \curr[\wgrad] \sgrad(\lead[\stateavg], \lead[\xi]), \lead - x^\star \rangle\\
        &\leq \br{ \frac{3 \eucdiam^2}{2 \gamma} + \gamma } \sqrt{ \stepscale + \sum_{\run=\start}^\nRuns \curr[\wgrad]^2 \norm{\curr[\tnabla] }^2 } - \sum_{\run=\start}^\nRuns \frac{2 c^4}{\smooth^2 \eucdiam^2 \next[\stepalt]} \curr[\wgrad]^2 \norm{ \curr[\nabla] }^2
    \end{align*}
    At this point, we need to understand how to relate $\norm{\curr[\nabla]}^2$ and $\norm{\curr[\tnabla]}^2$ while treating the step-size $\next[\stepalt]$ accordingly. The issue is that the step-size is agnostic to deterministic oracle information since we accumulate $\norm{\curr[\tnabla]}^2$. From the perspective of step-size, we need to find a relevant, if not matching, lower bound for $\norm{\curr[\nabla]}^2$ and $\norm{\curr[\tnabla]}^2$. Indeed, we follow the ideas presented in \citep{kavis2019universal}, and begin by (trivially) lower bounding both terms with the same expression,
	\begin{equation} \label{eq:gradient-lower-bound}
	\begin{aligned}
	    \norm{\tnabla_t}^2 &\geq \min \bc{ \norm{\tnabla_t}^2, \norm{\nabla_t}^2 }\\
	    \norm{\nabla_t}^2 &\geq \min \bc{ \norm{\tnabla_t}^2, \norm{\nabla_t}^2 }\\
	\end{aligned}
	\end{equation}
	Now, we will decompose $\norm{\curr[\tnabla]}^2$ into $\norm{\nabla_t}^2$ and the noise terms. Using the definitions in Eq.~\eqref{eq:noise-variables} and~\eqref{eq:grad-shorthand} and applying triangular inequality with quadratic expansion,
	\begin{align}\label{eq:quadratic-bound}
	    \norm{\curr[\tnabla]}^2 \leq 2 \norm{\curr[\nabla]}^2 + 4 \norm{ \curr[\delta] (\lead[\stateavg] - \curr[\stateopt]) }^2 + 4\norm{ \curr[\eps] }^2
	\end{align}
	We can also have the following trivial upper bound,
	\begin{equation} \label{eq:trivial-bound}
	\begin{aligned}
	    \norm{\tnabla_t}^2 &\leq 2 \norm{\tnabla_t}^2\\ 
	    &\leq 2 \norm{\tnabla_t}^2 + 4 \norm{ \curr[\delta] (\lead[\stateavg] - \curr[\stateopt]) }^2 + 4\norm{ \curr[\eps] }^2
	\end{aligned}
	\end{equation}
	Let us simplify the relationship between the bounds in Eq.~\eqref{eq:quadratic-bound} and Eq.~\eqref{eq:trivial-bound}; if $\norm{\nabla_t}^2 \leq \norm{\tnabla_t}^2$, then Eq.~\eqref{eq:quadratic-bound} is tighter, otherwise Eq.~\eqref{eq:trivial-bound} is tighter. Hence, we could select the minimum of $\norm{\nabla_t}^2$ and $\norm{\tnabla_t}$:
	\begin{align}\label{eq:minimum-bound}
	    \norm{\tnabla_t}^2 \leq 2 \min \bc{ \norm{\tnabla_t}^2, \norm{\nabla_t}^2 } + 4 \norm{ \curr[\delta] (\lead[\stateavg] - \curr[\stateopt]) }^2 + 4\norm{ \curr[\eps] }^2
	\end{align}
	Using this intuition, we can construct a variable $\curr[\stepaltalt]$ that always upper bounds the step-size.
	\begin{align}
	    \curr[\stepaltalt] = \frac{\gamma}{\sqrt{ \stepscale + \sum_{\runalt=\start}^{\run-1} \iter[\wgrad]^2 \min \bc{ \norm{\iter[\tnabla]}^2, \norm{\iter[\nabla]}^2 } } }
	\end{align}
	It is immediate that $\gamma_t \leq \curr[\stepaltalt]$. Essentially, we will replace the terms $\norm{\nabla_t}^2$ and $\norm{\tnabla_t}^2$ with $\min \bc{ \norm{\tnabla_t}^2, \norm{\nabla_t}^2 }$, $\norm{ \curr[\delta] (\lead[\stateavg] - \curr[\stateopt]) }^2$ and $\norm{\eps_t}^2$. 
	\begin{align*}
	    &\Expect{ \Reg(\sol[\point]) }\\
        &\leq \mathbb E \Bigg[ { \frac{3 \eucdiam^2 + 2 \gamma^2}{{2} \gamma} } \sqrt{ \stepscale + \sum_{\run=\start}^\nRuns \curr[\wgrad]^2 \norm{\curr[\tnabla] }^2 } - \sum_{\run=\start}^\nRuns \frac{2 c^4}{\smooth^2 \eucdiam^2 \next[\stepalt]} \curr[\wgrad]^2 \norm{ \curr[\nabla] }^2 \Bigg] \\
        &\leq \mathbb E \Bigg[ { \frac{3 \eucdiam^2 + 2 \gamma^2}{{2} \gamma} } \sqrt{ \stepscale + \sum_{\run=\start}^\nRuns 2 \curr[\wgrad]^2  \min \bc{ \norm{\tnabla_t}^2, \norm{\nabla_t}^2 } + 4 \curr[\wgrad]^2 \norm{ \curr[\delta] (\lead[\stateavg] - \curr[\stateopt]) }^2 + 4 \curr[\wgrad]^2 \norm{ \curr[\eps] }^2 }\\
        &\quad - \sum_{\run=\start}^\nRuns \frac{2 c^4}{\smooth^2 \eucdiam^2 \next[\stepaltalt]} \curr[\wgrad]^2 \min \bc{ \norm{\tnabla_t}^2, \norm{\nabla_t}^2 } \Bigg]\\
        &\leq \mathbb E \Bigg[ { \frac{3 \eucdiam^2 + 2 \gamma^2}{\sqrt{2} \gamma} } \sqrt{ \stepscale + \sum_{\run=\start}^\nRuns \curr[\wgrad]^2  \min \bc{ \norm{\tnabla_t}^2, \norm{\nabla_t}^2 } } - \sum_{\run=\start}^\nRuns \frac{2 c^4 \curr[\wgrad]^2 }{\smooth^2 \eucdiam^2 \next[\stepaltalt]} \min \bc{ \norm{\tnabla_t}^2, \norm{\nabla_t}^2 }\\
        &\quad+ 2 \br{ \frac{3 \eucdiam^2}{2 \gamma} + \gamma } \sqrt{ \sum_{\run=\start}^\nRuns \curr[\wgrad]^2 \norm{ \curr[\delta] (\lead[\stateavg] - \curr[\stateopt]) }^2 } + 2\br{ \frac{3 \eucdiam^2}{2 \gamma} + \gamma } \sqrt{ \sum_{\run=\start}^\nRuns \curr[\wgrad]^2 \norm{ \curr[\eps] }^2 } ~~\Bigg]\\
        &\leq \mathbb E \Bigg[ { \frac{3 \eucdiam^2 + 2 \gamma^2}{\sqrt{2} \gamma^2} } \br{ \stepalt \sqrt{\stepscale} + \sum_{\run=\start}^\nRuns \next[\stepaltalt] \curr[\wgrad]^2 \min \bc{ \norm{\tnabla_t}^2, \norm{\nabla_t}^2 } } - \sum_{\run=\start}^\nRuns \frac{2 c^4 \curr[\wgrad]^2}{\smooth^2 \eucdiam^2 \next[\stepaltalt]} \min \bc{ \norm{\tnabla_t}^2, \norm{\nabla_t}^2 }\\
        &\quad+ 2 \br{ \frac{3 \eucdiam^2}{2 \gamma} + \gamma } \sqrt{ \sum_{\run=\start}^\nRuns \curr[\wgrad]^2 \norm{ \curr[\delta] (\lead[\stateavg] - \curr[\stateopt]) }^2 } + 2\br{ \frac{3 \eucdiam^2}{2 \gamma} + \gamma } \sqrt{ \sum_{\run=\start}^\nRuns \curr[\wgrad]^2 \norm{ \curr[\eps] }^2 } ~~\Bigg] \\ 
        &\leq { \frac{3 \eucdiam^2 + 2 \gamma^2}{\sqrt{2} \gamma} \sqrt{\stepscale} } + \mathbb E \Bigg[ \sum_{\run=\start}^\nRuns \br{ { \frac{3 \eucdiam^2 + 2 \gamma^2}{\sqrt{2} \gamma^2} } - \frac{2 c^4}{\smooth^2 \eucdiam^2 \next[\stepaltalt]^2} } \next[\stepaltalt] \curr[\wgrad]^2 \min \bc{ \norm{\tnabla_t}^2, \norm{\nabla_t}^2 }\\ 
        &\quad+ 2 \br{ \frac{3 \eucdiam^2}{2 \gamma} + \gamma } \sqrt{ \sum_{\run=\start}^\nRuns \curr[\wgrad]^2 \norm{ \curr[\delta] (\lead[\stateavg] - \curr[\stateopt]) }^2 } + 2\br{ \frac{3 \eucdiam^2}{2 \gamma} + \gamma } \sqrt{ \sum_{\run=\start}^\nRuns \curr[\wgrad]^2 \norm{ \curr[\eps] }^2 } ~~\Bigg]\\
	\end{align*}
    Next, we will simplify the first summation and eventually show that it has a finite, constant upper bound. First off, notice that $\br{ { \frac{3 \eucdiam^2 + 2 \gamma^2}{\sqrt{2} \gamma^2} } - \frac{2 c^4}{\smooth^2 \eucdiam^2 \next[\stepaltalt]^2} }$ is a decreasing quantity and we are interested in the time point at which it changes signs. Let us define,
    \begin{align*}
        \nRuns_0 = \max \bc{ t \in \mathbb Z \mid \br{ { \frac{3 \eucdiam^2 + 2 \gamma^2}{\sqrt{2} \gamma^2} } - \frac{2 c^4}{\smooth^2 \eucdiam^2 \next[\stepaltalt]^2} } \geq 0 }.
    \end{align*}
    This immediately implies that for any $t \leq \nRuns_0$,
    \begin{equation} \label{eq:gamma_T0}
    \begin{aligned}
        \frac{1}{\next[\stepaltalt]} &\leq \frac{ \smooth \eucdiam \sqrt{ 3 \eucdiam^2 + 2 \gamma^2 } }{ 2^{3/4} \gamma c^2 }.
    \end{aligned}
    \end{equation}
    There is a critical cut-off point for the possible values of $T_0$ depending on the value of $\stepscale$. When the initial step-size is small enough, i.e., $\stepscale$ is too large, then $T_0 < 0$. This occurs when $\stepscale \geq \frac{\smooth^2 \eucdiam^2 (3 \eucdiam^2 + 2\stepalt^2)}{2^{3/2} \stepalt^2 c^4 }$, which implies,
    \begin{align*}
        \mathbb E \Bigg[ \sum_{\run=\start}^\nRuns \br{ { \frac{3 \eucdiam^2 + 2 \gamma^2}{\sqrt{2} \gamma^2} } - \frac{2 c^4}{\smooth^2 \eucdiam^2 \next[\stepaltalt]^2} } \next[\stepaltalt] \curr[\wgrad]^2 \min \bc{ \norm{\tnabla_t}^2, \norm{\nabla_t}^2 } \Bigg] &\leq 0
    \end{align*}
    We get the same bound when $T_0 = 0$. For any other value of $T_0$, i.e., $T_0 > 0$, observe that the way we define $\nRuns_0$ enables us to \textit{upper bound} the summation up to $\nRuns$, with the summation up to $\nRuns_0$. Hence,
    \begin{align*}
        \frac{3 \eucdiam^2 + 2 \gamma^2}{\sqrt{2} \gamma} &\sqrt{\stepscale}+ \mathbb E \Bigg[ \sum_{\run=\start}^\nRuns \br{ { \frac{3 \eucdiam^2 + 2 \gamma^2}{\sqrt{2} \gamma^2} } - \frac{2 c^4}{\smooth^2 \eucdiam^2 \next[\stepaltalt]^2} } \next[\stepaltalt] \curr[\wgrad]^2 \min \bc{ \norm{\tnabla_t}^2, \norm{\nabla_t}^2 } \Bigg] \\
        &\leq { \frac{3 \eucdiam^2 + 2 \gamma^2}{\sqrt{2} \gamma} \sqrt{\stepscale} } + \mathbb E \Bigg[ \sum_{\run=\start}^{\nRuns_0} \br{ \frac{3 \eucdiam^2 + 2 \gamma^2}{\sqrt{2} \gamma^2} - \frac{2 c^4}{\smooth^2 \eucdiam^2 \next[\stepaltalt]^2} } \next[\stepaltalt] \curr[\wgrad]^2 \min \bc{ \norm{\tnabla_t}^2, \norm{\nabla_t}^2 } \Bigg]\\
        &\leq \frac{3 \eucdiam^2 + 2 \gamma^2}{\sqrt{2} \gamma} \sqrt{\stepscale} + \frac{3 \eucdiam^2 + 2 \gamma^2}{\sqrt{2} \gamma} \sum_{\run=\start}^{\nRuns_0} \frac{ \curr[\wgrad]^2 \min \bc{ \norm{\tnabla_t}^2, \norm{\nabla_t}^2 } }{ \sqrt{ \stepscale + \sum_{\runalt=\start}^{\run-1} \iter[\wgrad]^2 \min \bc{ \norm{\iter[\tnabla]}^2, \norm{\iter[\nabla]}^2 } } }\\
        &\leq \frac{3 \sqrt{2} \eucdiam^2 + 2 \sqrt{2} \gamma^2}{ \gamma} \sqrt{ \stepscale + \sum_{\run=\start}^{\nRuns_0} \curr[\wgrad]^2 \min \bc{ \norm{\tnabla_t}^2, \norm{\nabla_t}^2 } }\\
        &= \br{3 \sqrt{2} \eucdiam^2 + 2 \sqrt{2} \gamma^2} \frac{1}{ \stepaltalt_{\nRuns_0 + 1} }\\
        &\leq \frac{ \smooth \eucdiam \br{ 3 \eucdiam^2 + 2 \gamma^2 }^{3/2} }{ 2^{1/4} \gamma c^2 }
    \end{align*}
    To make sure we incorporate the effect of the initial step-size, we combine the bounds to get
    \begin{align*}
        \frac{3 \eucdiam^2 + 2 \gamma^2}{\sqrt{2} \gamma} &\sqrt{\stepscale}+ \mathbb E \Bigg[ \sum_{\run=\start}^\nRuns \br{ { \frac{3 \eucdiam^2 + 2 \gamma^2}{\sqrt{2} \gamma^2} } - \frac{2 c^4}{\smooth^2 \eucdiam^2 \next[\stepaltalt]^2} } \next[\stepaltalt] \curr[\wgrad]^2 \min \bc{ \norm{\tnabla_t}^2, \norm{\nabla_t}^2 } \Bigg] \\
        &\leq \frac{3\eucdiam^2 + 2 \stepalt^2}{2^{1/4} \stepalt} \max \bc{ \frac{\sqrt{\stepscale}}{2^{1/4}}, \frac{\smooth \eucdiam \sqrt{ 3 \eucdiam^2 + 2 \stepalt^2 }}{c^2} }
    \end{align*}
    This gives us the constant part of the regret, which will lead to the $O(1 / \nRuns^3)$ part of the convergence rate. Now, what remains is to handle the ``stochasticity''. 
    We will bound the remaining stochastic terms with respect to the stochastic gradient and the stochastic Hessian. Plugging the expected regret in to the bound and combining all the expressions together,
    \begin{align*}
        &\Expect{ \Reg(\sol[\point]) }\\
        &\leq { \frac{3 \eucdiam^2 + 2 \gamma^2}{ \gamma} } \Expect{  \sqrt{ \sum_{\run=\start}^\nRuns \curr[\wgrad]^2 \norm{ \curr[\delta] (\lead[\stateavg] - \curr[\stateopt]) }^2 } + \sqrt{ \sum_{\run=\start}^\nRuns \curr[\wgrad]^2 \norm{ \curr[\eps] }^2 } } + \frac{3\eucdiam^2 + 2 \stepalt^2}{2^{1/4} \stepalt} \max \bc{ \frac{\sqrt{\stepscale}}{2^{1/4}}, \frac{\smooth \eucdiam \sqrt{ 3 \eucdiam^2 + 2 \stepalt^2 }}{c^2} }\\
        &\leq \frac{3\eucdiam^2 + 2 \stepalt^2}{2^{1/4} \stepalt} \max \bc{ \frac{\sqrt{\stepscale}}{2^{1/4}}, \frac{\smooth \eucdiam \sqrt{ 3 \eucdiam^2 + 2 \stepalt^2 }}{c^2} } + { \frac{3 \eucdiam^2 + 2 \gamma^2}{ \gamma} } \br{ \sqrt{ \sum_{\run=\start}^\nRuns \Expect{ \curr[\wgrad]^2 \norm{ \curr[\delta]}^2 \norm{(\lead[\stateavg] - \curr[\stateopt]) }^2 } } }\\
        &\quad+ { \frac{3 \eucdiam^2 + 2 \gamma^2}{ \gamma} } \sqrt{ \sum_{\run=\start}^\nRuns \Expect{ \curr[\wgrad]^2 [ \norm{ \sgrad(\lead[\stateavg], \lead[\xi]) - \nabla \obj(\lead[\stateavg]) }^2 + \norm{ \sgrad(\curr[\stateopt], \curr[\xi]) - \nabla \obj(\curr[\stateopt]) }^2 ] } } \\
        &= \frac{3\eucdiam^2 + 2 \stepalt^2}{2^{1/4} \stepalt} \max \bc{ \frac{\sqrt{\stepscale}}{2^{1/4}}, \frac{\smooth \eucdiam \sqrt{ 3 \eucdiam^2 + 2 \stepalt^2 }}{c^2} } + { \frac{3 \eucdiam^2 + 2 \gamma^2}{ \gamma} }  \sqrt{ \eucdiam^2 \sum_{\run=\start}^\nRuns \Expect{ \curr[\wgrad]^2 \frac{\curr[\wavg]^2}{\curr[\wavgsum]^2}\Expect{ \norm{ \curr[\delta] }^2 \mid \curr[\mathcal F] } } }  \\
        &\quad+ { \frac{3 \eucdiam^2 + 2 \gamma^2}{ \gamma} } \sqrt{ \sum_{\run=\start}^\nRuns \curr[\wgrad]^2 \Expect{ \Expect{ \norm{ \sgrad(\lead[\stateavg], \lead[\xi]) - \nabla \obj(\lead[\stateavg]) }^2 \mid \curr[\mathcal F] } + \Expect{ \norm{ \sgrad(\curr[\stateopt], \curr[\xi]) - \nabla \obj(\curr[\stateopt]) }^2 \mid \beforelead[\mathcal F] } } } \\
    \end{align*}
    \begin{align*}
        &\leq { \frac{3 \eucdiam^2 + 2 \gamma^2}{ \gamma} } \br{ \sqrt{ \eucdiam^2 \varhess^2 \sum_{\run=\start}^\nRuns \curr[\wgrad]^2 \frac{\curr[\wavg]^2}{\curr[\wavgsum]^2} } + \sqrt{ 4 \vargrad^2 \sum_{\run=\start}^\nRuns \curr[\wgrad]^2 } } + \frac{3\eucdiam^2 + 2 \stepalt^2}{2^{1/4} \stepalt} \max \bc{ \frac{\sqrt{\stepscale}}{2^{1/4}}, \frac{\smooth \eucdiam \sqrt{ 3 \eucdiam^2 + 2 \stepalt^2 }}{c^2} } \\
        &\leq { \frac{3 \eucdiam^2 + 2 \gamma^2}{ \gamma} } \br{ \sqrt{ \frac{\eucdiam^2 \varhess^2}{c^2} \sum_{\run=\start}^\nRuns \curr[\wgrad]  } + 2 \vargrad T^{5/2} } + \frac{3\eucdiam^2 + 2 \stepalt^2}{2^{1/4} \stepalt} \max \bc{ \frac{\sqrt{\stepscale}}{2^{1/4}}, \frac{\smooth \eucdiam \sqrt{ 3 \eucdiam^2 + 2 \stepalt^2 }}{c^2} }\\
        &\leq \frac{3\eucdiam^2 + 2 \stepalt^2}{2^{1/4} \stepalt} \max \bc{ \frac{\sqrt{\stepscale}}{2^{1/4}}, \frac{\smooth \eucdiam \sqrt{ 3 \eucdiam^2 + 2 \stepalt^2 }}{c^2} } + { \frac{3 \eucdiam^3 + 2 \eucdiam \gamma^2}{ c \gamma} } \varhess \nRuns^{3/2} + { \frac{6 \eucdiam^2 + 4 \gamma^2}{ \gamma} } \vargrad \nRuns^{5/2}
    \end{align*}
    Before concluding the convergence proof, we would like to have a quick detour on the value of $c$. The value of $c$ is roughly between $[1/p, 1]$, where $p$ is the exponent of the averaging weight, $\curr[\wavg] = t^p$. For instance, when we pick $\curr[\wavg] = t^2$, we have $\run^3 / 3 \leq \curr[\wavgsum] \leq \run^3$; and when $\curr[\wavg] = t^3$, $\run^4 /4 \leq \curr[\wavgsum] \leq \run^4$. Hence, we can avoid its effect in the final bound.
    Running the above expression through \thmref{thm:conversion} we obtain,
    \begin{align*}
        \obj (\lastlead[\stateavg]) - \obj (\sol[\point]) \leq O \br{ \frac{ { \frac{\eucdiam^2 + \stepalt^2}{\gamma} } \vargrad }{\sqrt{\nRuns}} + \frac{ { \frac{\eucdiam^3 + \eucdiam \stepalt^2}{\gamma} } \varhess }{\nRuns^{3/2}} + \frac{ \max \bc{ L {\frac{ \eucdiam^4 + \eucdiam \stepalt^3}{\gamma} }, \sqrt{\stepscale} { \frac{\eucdiam^2 + \stepalt^2}{\stepalt} } } }{\nRuns^3} }
    \end{align*}
\end{proof}

\if 0
In the previous section, we demonstrated how to incorporate the second-order information into the analysis of first-order methods using a simplified algorithm that relies on an implicit update. A standard way to overcome the implementability issues for such implicit schemes is through some line search procedure \cite{mokhtari2021optimistic, monteiro2013acceleration}, which helps the algorithm set the next iterate and the step-size simultaneously. However, our step-size lags one iteration behind; the step-size $\gamma_t$ accumulates information up to iteration $t-1$. Hence, we naturally avoid the dependence between the next iterate and the step-size.

Our solution is simply substituting $x_{t+1/2}$ in the update of the extrapolation step with the free variable $x$. Through a set of algebraic manipulations, we could show that this modification not only makes our update explicit, but also allows us to have the same optimality condition for the extrapolation step. Let us present our explicit algorithm.
\begin{algorithm}[H]
	\caption{Explicit Second-order Accelerated Mirror-Prox}\label{alg:explicit_algorithm}
	\textbf{Input:} $x_1 \in \mathcal X$  \textbf{Output:} $\hat{x} \in \mathcal X$
	\begin{algorithmic}[1]
 
	\FOR {$t=1$ to $T$}
 		\smallskip
		\STATE $x_{t+1/2} = \arg \min_{x \in \mathcal X} a_t \ip{ \nabla f(\tilde x_t) }{x}  + a_t \frac{b_t}{2 B_t} \ip{ \nabla^2 f(\tilde x_t)( x - x_t ) }{ x - x_t } + \frac{1}{\eta_t} D_h (x; x_t)$
		\smallskip
		\STATE $x_{t+1} \,\,\,\,\,= \arg \min_{x \in \mathcal X} a_t \ip{ \nabla f(\bar x_{t+1/2}) }{x} + \frac{1}{\eta_t} D_h (x; x_t)$
		\smallskip
	\ENDFOR
\end{algorithmic}
\end{algorithm}
The update rule for $x_{t+1/2}$ is simplified for brevity, specifically the quadratic term with the Hessian. We could write it in a more comprehensible way:
\begin{align*}
	\frac{a_t b_t}{2 B_t} \ip{ \nabla^2 f(\tilde x_t)( x - x_t ) }{ x - x_t } = \frac{a_t B_t}{2 b_t} \ip{ \nabla^2 f(\tilde x_t)( \frac{ b_t x + \sum_{i=1}^{t-1} b_i x_{i+1/2} }{B_t} - \tilde x_t ) }{ \frac{ b_t x + \sum_{i=1}^{t-1} b_i x_{i+1/2} }{B_t} - \tilde x_t }
\end{align*}
For the sake of concise exposition of our algorithm as well as the analysis, we will use a shorthand notation to accommodate the Hessian-vector product in the $x_{t+1/2}$ update. We call it ``curvature-corrected'' extrapolation step with a little stretching of terminology
\begin{align} \label{eq:second-order-shorthand}
	\F(x; y) = \nabla f(y) + \nabla^2 f(y)(x - y),
\end{align}
and its stochastic approximation
\begin{align*}
	\F(x;y) = \Expect{ \Fs(x;y) \mid x,y },
\end{align*}
such that
\begin{align*}
	\Expect{ \norm{ \nabla f(x) - \Fs(x;y) } \mid x, y } \leq \sigma \norm{x - y} + \frac{L}{2} \norm{x - y}^2.
\end{align*}
We finally define the Hessian-vector product with its shorthand form due to space constraints.
\begin{align*}
	\Hv(x;y) = \nabla^2 f(y)(x - y),
\end{align*}

Useful expressions:
\begin{itemize}
	\item[-] Second-order smoothness \#1:
	\begin{align*}
		\norm{\nabla^2 f(x) - \nabla^2 f(y)} \leq L \norm{x - y}
	\end{align*}
	\item[-] Second-order smoothness \#2:
	\begin{align*}
		\norm{\nabla f(x) - \nabla f(y) - \nabla^2 f(y)(x-y)} \leq \frac{L}{2} \norm{x - y}^2
	\end{align*}
	\item[-] Bregman divergence and its gradient w.r.t 1-strongly convex regularizer:
	\begin{align*}
		D_h (x; y) = h(x) - h(y) - \ip{ \nabla h(y) }{ x - y } \quad\&\quad \nabla D_h(x; y) = \nabla h(x) - \nabla h(y)
	\end{align*}
	\item[-] Three point identity for Bregman divergence:
	\begin{align*}
		\ip{ \nabla h(x) - \nabla h(y) }{ y - z } = D_h (z ; x) - D_h(z ; y) - D_h(y ; x)
	\end{align*}
	\item[-] Optimality for $x_{t+1/2}$:
	\begin{align*}
		\ip{ a_t \nabla f( \tilde x_{t} ) + a_t \nabla^2 f( \tilde x_t ) (\bar x_{t+1/2} - \tilde x_t) }{ x_{t+1/2} - z } &\leq \frac{1}{\eta_t} \ip{ \nabla h ( x_{t+1/2} ) - \nabla h( x_t ) }{ z - x_{t+1/2} }\\
		&= \frac{1}{\eta_t} \br{ D_h(z; x_t) - D_h(z; x_{t+1/2}) - D_h(x_{t+1/2}; x_t) }
	\end{align*}
	\item[-] Optimality for $x_{t+1}$:
	\begin{align*}
		\ip{ a_t \nabla f(\bar x_{t+1/2}) }{ x_{t+1} - z } &\leq \frac{1}{\eta_t} \ip{ \nabla h ( x_{t+1} ) - \nabla h( x_t ) }{ z - x_{t+1} }\\
		&= \frac{1}{\eta_t} \br{ D_h(z; x_t) - D_h(z; x_{t+1}) - D_h(x_{t+1}; x_t) }
	\end{align*}
\end{itemize}

\begin{theorem} \label{thm:explicit-deterministic}
	\begin{align*}
	    f(\bar x_{t+1/2}) - f(x^\star) \leq O \br{ \frac{1}{T^3} }
	\end{align*}
\end{theorem}
\begin{proof}
    Observe that the optimality condition for explicit algorithm matches that of the implicit algorithm due to the simple construction of our explicit update for the $x_{t+1/2}$ sequence. Following the same analysis as in the implicit setting up to introducing Bregman divergences,
    \begin{align*}
    	&\sum_{t=1}^{T} a_t \ip{\nabla f(\bar x_{t+1/2})}{x_{t+1/2} - x^\star}\\
    	&\leq \sum_{t=1}^{T} a_t \ip{\nabla f(\bar x_{t+1/2}) - \F(\bar x_{t+1/2}; \tilde x_t)}{x_{t+1/2} - x_{t+1}} - \frac{1}{2 \gamma_t} \br{ \norm{x_{t+1/2} - x_{t+1}}^2 + \norm{x_{t+1/2} - x_t}^2}
    \end{align*}
\end{proof}

\section{Inexact second-order information}
Although it is significantly more efficient to access Hessian-vector products in comparison to computing the full Hessian, it might be the case that we have inexact evaluation of the Hessian-vector product such that
\begin{align*}
	\Hv(x;y) = \Expect{ \Hv(x;y) \mid x, y }, \qquad \Expect{\norm{\Hvs(x;y) - \Hv(x;y)} } \leq \sigma \norm{x - y}
\end{align*}
Note that the above inexactness measure is an equivalent of \emph{bounded variance for Hessian}, i.e., noise in the computation of Hessian estimate has bounded expected error.
Under the noisy/inexact Hessian-vector oracle model, we manage to show noise adaptation property of our second-order algorithm while verifying that we could improve upon the $1 / \sqrt{T}$ rate when the \emph{stochastic first-order oracle has bounded variance}.
With the purpose of compact presentation of our results, we define the following deterministic/inexact evaluations:
\begin{align*}
    \nabla_t &= \nabla f(\bar x_{t+1/2}) - \nabla f(\tilde x_t) - \Hv(\bar x_{t+1/2}; \tilde x_t)\\
    \tnabla_t &= \nabla f(\bar x_{t+1/2}) - \nabla f(\tilde x_t) - \Hvs(\bar x_{t+1/2}; \tilde x_t)\\
\end{align*}
\begin{reptheorem}{thm:implicit-universal}
	Let $\{ \lead \}_{\run=\start}^{\nRuns}$ be generated by Algorithm~\ref{alg:implicit-euclidean}. Assume that $f$ is twice-continuously differentiable and second-order smooth with modulus $L$. Under Assumptions~\eqref{eq:stochastic-oracle},
     \begin{align*}
        \Expect{ f(\bar x_{T+1/2}) - f(x^\star) } \leq O \br{\frac{\sigma_g}{\sqrt{T}} + \frac{\sigma_H}{T^{3/2}} + \frac{1}{T^3}}
    \end{align*}
\end{reptheorem}
\begin{proof}
	We begin by our template inequality based on the anytime offline algorithm of \citet{cutkosky2019anytime} as well as the regret definition for the optimistic mirror-prox  \citep{rakhlin2014prediction}. Since the optimality condition(s) for the explicit algorithm coincides with that of the implicit, we could use the same bounds up to the point where we need to deal with $\Hv$ and $\Hvs$ at the same time.
	\begin{align*}
		&\sum_{t=1}^T a_t \ip{ \nabla f(x_{t+1/2}) }{ x_{t+1/2} - x^\star }\\
		&\leq \frac{3D^2}{\gamma_{T+1}} + \frac{1}{2} \sum_{t=1}^T \gamma_{t+1} a_t^2 \norm{ \tnabla_t }^2 - \frac{1}{2 \gamma_{t+1}} \norm{x_{t+1/2} - x_t}^2
	\end{align*}
	First, we want to merge the first two terms by upper bounding the second term using \lemref{lem:technical-sqrt}.
	\begin{align*}
		\frac{1}{2} \sum_{t=1}^T \gamma_{t+1} a_t^2 &\norm{ \tnabla_t }^2\\
		&= \frac{\gamma}{2} \sum_{t=1}^T \frac{ a_t^2 \norm{ \tnabla_t }^2 }{ \sqrt{ 1 + \sum_{i=1}^t a_i^2 \norm{ \tnabla_t }^2 } } \\
		&\leq \gamma \sqrt{ 1 + \sum_{t=1}^T a_t^2 \norm{ \tnabla_t }^2 }
	\end{align*}
	Combining the bound above with the first term,
	\begin{align*}
	    &\sum_{t=1}^T a_t \ip{ \nabla f(x_{t+1/2}) }{ x_{t+1/2} - x^\star }\\
		&\leq\br{ \frac{3D^2}{\gamma} + \gamma } \sqrt{ 1 + \sum_{t=1}^T a_t^2 \norm{ \tnabla_t }^2 } - \frac{1}{2 \gamma_{t+1}} \norm{x_{t+1/2} - x_t}^2
	\end{align*}
	We upper bound the negative term as in the analysis of \thmref{thm:implicit-deterministic}.
	\begin{align*}
	    - \frac{1}{2 \gamma_{t+1}} \norm{x_{t+1/2} - x_t}^2 \leq - \frac{B_t^4}{L^2 D^2 b_t^4 \gamma_{t+1}} \norm{ \nabla f(\bar x_{t+1/2}) - \F(\bar x_{t+1/2}; \tilde x_t) }^2 = - \frac{B_t^4}{L^2 D^2 b_t^4 \gamma_{t+1}} \norm{ \nabla_t }^2
	\end{align*}
	We plug this expression in and obtain
	\begin{align*}
	    \sum_{t=1}^T a_t \ip{ \nabla f(x_{t+1/2}) }{ x_{t+1/2} - x^\star } &\leq \br{ \frac{3D^2}{\gamma} + \gamma } \sqrt{ 1 + \sum_{t=1}^T a_t^2 \norm{ \tnabla_t }^2 } - \frac{B_t^4}{L^2 D^2 b_t^4 \gamma_{t+1}} \norm{ \nabla_t }^2
	\end{align*}
	At this point, we need to understand how to relate $\norm{\nabla_t}^2$ and $\norm{\tnabla_t}$ while treating the step-size $\gamma_{t+1}$ accordingly. The issue is that the step-size is agnostic to deterministic oracle information since we accumulate $\norm{\tnabla_t}^2$. From the perspective of step-size, we need to find a relevant, if not matching, lower bound for $\norm{\nabla_t}^2$ and $\norm{\tnabla_t}$. Indeed, we could (trivially) lower bound both terms with the same expression,
	\begin{equation} \label{eq:gradient-lower-bound}
	\begin{aligned}
	    \norm{\tnabla_t}^2 &\geq \min \bc{ \norm{\tnabla_t}^2, \norm{\nabla_t}^2 }\\
	    \norm{\nabla_t}^2 &\geq \min \bc{ \norm{\tnabla_t}^2, \norm{\nabla_t}^2 }\\
	\end{aligned}
	\end{equation}
	Now, we will decompose $\norm{\tnabla_t}^2$ into the noise and $\norm{\nabla_t}^2$. Let us define $\eps_t = \tnabla_t - \nabla_t$ as the error of the stochastic estimate. Then, using triangular inequality and quadratic expansion,
	\begin{align}\label{eq:quadratic-bound}
	    \norm{\tnabla_t}^2 \leq 2 \norm{\nabla_t}^2 + 2 \norm{\eps_t}^2
	\end{align}
	We can also have the following trivial upper bound,
	\begin{align}\label{eq:trivial-bound}
	    \norm{\tnabla_t}^2 &\leq 2 \norm{\tnabla_t}^2 \leq 2 \norm{\tnabla_t}^2 + \norm{\eps_t}^2
	\end{align}
	Let us simplify the relationship between the bounds in Eq.~\eqref{eq:quadratic-bound} and Eq.~\eqref{eq:trivial-bound}; if $\norm{\nabla_t}^2 \leq \norm{\tnabla_t}^2$, then Eq.~\eqref{eq:quadratic-bound} is tighter, otherwise Eq.~\eqref{eq:trivial-bound} is tighter. Hence, we need to select the minimum of $\norm{\nabla_t}^2$ and $\norm{\tnabla_t}$:
	\begin{align}\label{eq:minimum-bound}
	    \norm{\tnabla_t}^2 \leq 2 \min \bc{ \norm{\tnabla_t}^2, \norm{\nabla_t}^2 } + 2 \norm{\eps_t}^2
	\end{align}
	Using this intuition, we can construct a variable $\lambda_t$ that always upper bounds the step-size.
	\begin{align}
	    \lambda_t = \frac{\gamma}{\sqrt{ 1 + \sum_{t=1}^T a_t^2 \min \bc{ \norm{\tnabla_t}^2, \norm{\nabla_t}^2 } } }
	\end{align}
	It is immediate that $\gamma_t \leq \lambda_t$. Essentially, we will replace the terms $\norm{\nabla_t}^2$ and $\norm{\tnabla_t}$ with $\min \bc{ \norm{\tnabla_t}^2, \norm{\nabla_t}^2 }$ and $\norm{\eps_t}^2$ and proceed the analysis in the same way as we did with the deterministic setting.
	\begin{align*}
	    &\sum_{t=1}^T a_t \ip{ \nabla f(x_{t+1/2}) }{ x_{t+1/2} - x^\star }\\
	    &\leq \br{ \frac{3D^2}{\gamma} + \gamma } \sqrt{ 1 + \sum_{t=1}^T a_t^2 \norm{ \tnabla_t }^2 } - \frac{B_t^4}{L^2 D^2 b_t^4 \gamma_{t+1}} \norm{ \nabla_t }^2\\
	    &\leq \sqrt{2} \br{ \frac{3D^2}{\gamma} + \gamma } \sqrt{ 1 + \sum_{t=1}^T a_t^2 \min \bc{ \norm{\tnabla_t}^2, \norm{\nabla_t}^2 } + a_t^2 \norm{\eps_t}^2 } - \frac{t^4}{L^2 D^2 \gamma_{t+1}} \norm{ \nabla_t }^2\\
	    &\leq \sqrt{2} \br{ \frac{3D^2}{\gamma} + \gamma } \sqrt{ 1 + \sum_{t=1}^T a_t^2 \min \bc{ \norm{\tnabla_t}^2, \norm{\nabla_t}^2 } + a_t^2 \norm{\eps_t}^2 } - \frac{t^4}{L^2 D^2 \lambda_{t+1}} \min \bc{ \norm{\tnabla_t}^2, \norm{\nabla_t}^2 }.
	\end{align*}
	We used Eq.~\eqref{eq:minimum-bound} and the fact that $B_t/b_t = O(t)$ to get the second inequality. The last inequality follow from Eq.~\eqref{eq:gradient-lower-bound} and the fact that $\gamma_t \leq \lambda_t$ which implies $-\gamma_t^{-1} \leq -\lambda_t^{-1}$.
	Now, we will rearrange the terms so that we will exploit the negative term to obtain the $1/T^3$ term in our bounds while the noise term will give us $1/T^{3/2}$. By concavity and monotonicity of the square root,
	\begin{align*}
	    &\sum_{t=1}^T a_t \ip{ \nabla f(x_{t+1/2}) }{ x_{t+1/2} - x^\star }\\
	    &\leq \sqrt{2} \br{ \frac{3D^2}{\gamma} + \gamma } \sqrt{ 1 + \sum_{t=1}^T a_t^2 \min \bc{ \norm{\tnabla_t}^2, \norm{\nabla_t}^2 } } - \frac{t^4}{L^2 D^2 \lambda_{t+1}} \min \bc{ \norm{\tnabla_t}^2, \norm{\nabla_t}^2 }\\
	    &\quad+ \sqrt{2} \br{ \frac{3D^2}{\gamma} + \gamma } \sqrt{ \sum_{t=1}^T a_t^2 \norm{\eps_t}^2 }\\
	    &\leq \sqrt{2} \br{ \frac{3D^2}{\gamma} + \gamma } \sum_{t=1}^T \frac{ a_t^2 \min \bc{ \norm{\tnabla_t}^2, \norm{\nabla_t}^2 } }{\sqrt{ 1 + \sum_{i=1}^t a_i^2 \min \bc{ \norm{\tnabla_i}^2, \norm{\nabla_i}^2 } }} - \frac{t^4}{L^2 D^2 \lambda_{t+1}} \min \bc{ \norm{\tnabla_t}^2, \norm{\nabla_t}^2 }\\
	    &\quad+ \sqrt{2} \br{ \frac{3D^2}{\gamma} + \gamma } \sqrt{ \sum_{t=1}^T a_t^2 \norm{\eps_t}^2 } + \sqrt{2} \br{ \frac{3D^2}{\gamma} + \gamma }\\
	    &\leq \sqrt{2} \br{ \frac{3D^2}{\gamma^2} + 1 } \bs{ 1 + \sqrt{ \sum_{t=1}^T a_t^2 \norm{\eps_t}^2 } + \sum_{t=1}^T \br{ 1 - \frac{1}{L^2 D^2 \lambda_{t+1}^2} } \lambda_{t+1} a_t^2 \min \bc{ \norm{\tnabla_t}^2, \norm{\nabla_t}^2 } }
	\end{align*}
	Following the same intuition as the deterministic analysis, we want to show that the last summation is summable and bounded by an absolute constant. Let there be a time point $T_0 \in \mathbb N$ such that for all $t > T_0$, $\lambda_{t+1} \leq \frac{1}{LD}$. That fact that $\lambda_t$ is a decreasing quantity, we would naturally expect the step-size to fall below the threshold $\frac{1}{LD}$. This implies that for all $t > T_0$, $\br{ 1 - \frac{1}{L^2 D^2 \lambda_{t+1}^2} } \leq 0$. We can rewrite the last summation as
	\begin{align*}
	    \sum_{t=1}^{T} \br{ 1 - \frac{1}{L^2 D^2 \lambda_{t+1}^2} } &\lambda_{t+1} a_t^2 \min \bc{ \norm{\tnabla_t}^2, \norm{\nabla_t}^2 }\\
	    &\leq \sum_{t=1}^{T_0} \br{ 1 - \frac{1}{L^2 D^2 \lambda_{t+1}^2} } \lambda_{t+1} a_t^2 \min \bc{ \norm{\tnabla_t}^2, \norm{\nabla_t}^2 }\\
	    &\leq \gamma \sum_{t=1}^{T_0} \frac{ a_t^2 \min \bc{ \norm{\tnabla_t}^2, \norm{\nabla_t}^2 } }{ \sqrt{ 1 + \sum_{i=1}^{t} a_i^2 \min \bc{ \norm{\tnabla_i}^2, \norm{\nabla_i}^2 } } }\\
	    &\leq \gamma \br{ -1 + 2\sqrt{ 1 + \sum_{t=1}^{T_0} a_t^2 \min \bc{ \norm{\tnabla_t}^2, \norm{\nabla_t}^2 } } }\\
	    &= \frac{2 \gamma^2}{\lambda_{T_0+1}} - \gamma\\
	    &\leq 2 \gamma^2 LD - \gamma
	\end{align*}
\end{proof}
\fi

\section{Implicit Accelerated Second-order Algorithm (\thmref{thm:implicit}) } \label{app:implicit}

In this section, we will provide the analysis of the implicit algorithm \eqref{eq:implicit} under deterministic oracles. To do so, we will first start with a corollary result based on Proposition~\ref{prop:template-inequality} that essentially proves the same template inequality under deterministic oracle model. In fact, one could easily show that Proposition~\ref{prop:template-inequality} holds exactly up to replacing stochastic evaluations $\sgrad(\cdot)$ and $\Fs(\cdot;\cdot)$ with $\nabla f(\cdot)$ and $\F(\cdot;\cdot)$. For completeness, we will formalize the aforementioned result in Proposition~\ref{prop:template-inequality-implicit} which follows the same steps as the proof of Proposition~\ref{prop:template-inequality}.

\begin{proposition} \label{prop:template-inequality-implicit}
    Let $\{ \lead \}_{\run = \start}^{\nRuns}$ be generated by \eqref{eq:implicit}, run with a non-increasing step-size sequence $\curr[\stepalt]$ and non-decreasing sequences of weights $\curr[\wgrad], \curr[\wavg] \geq 1$ such that $\curr[\wgrad] / \curr[\wavg]$ is also non-increasing. Then, the following guarantee holds:
    \begin{align*}
        \Reg(\sol[\point]) \leq \frac{1}{2} \br{\frac{3\eucdiam^2}{\gamma_{T+1}} + \sum_{\run=\start}^\nRuns \next[\stepalt] \curr[\wgrad]^2 \norm{\nabla \obj(\lead[\stateavg]) - \F(\lead[\stateavg]; \curr[\stateopt])}^2 - \frac{\norm{\lead - \curr}^2}{\next[\stepalt]} }.
    \end{align*}
\end{proposition}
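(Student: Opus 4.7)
The plan is to mirror the proof of Proposition~\ref{prop:template-inequality} verbatim, but with the stochastic feedback $\sgrad, \Fs$ replaced by the exact operators $\nabla \obj, \F$. Since there is no randomness, no expectations or conditioning arguments are needed; the statement is obtained pointwise. Concretely, the two optimality conditions characterizing the inner products against the iterates coming from the projection steps of \eqref{eq:implicit} read, for every $z_0, z_1 \in \compact$,
\begin{align*}
\curr[\wgrad]\ip{\F(\lead[\stateavg];\curr[\stateopt])}{\lead - z_0}
&\leq \tfrac{1}{2\curr[\stepalt]}\bigl(\norm{\curr - z_0}^2 - \norm{\lead - z_0}^2 - \norm{\lead - \curr}^2\bigr),\\
\curr[\wgrad]\ip{\nabla\obj(\lead[\stateavg])}{\next - z_1}
&\leq \tfrac{1}{2\curr[\stepalt]}\bigl(\norm{\curr - z_1}^2 - \norm{\next - z_1}^2 - \norm{\next - \curr}^2\bigr).
\end{align*}

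The next step is to decompose the regret summand $\curr[\wgrad]\ip{\nabla \obj(\lead[\stateavg])}{\lead - \sol} = \curr[\wgrad]\ip{\nabla \obj(\lead[\stateavg])}{\lead - \next} + \curr[\wgrad]\ip{\nabla \obj(\lead[\stateavg])}{\next - \sol}$. Applying the second optimality with $z_1 = \sol$ handles the latter; for the former I would add and subtract $\F(\lead[\stateavg]; \curr[\stateopt])$, use the first optimality with $z_0 = \next$ to cancel $\curr[\wgrad]\ip{\F(\lead[\stateavg];\curr[\stateopt])}{\lead - \next}$ at the cost of picking up $-\tfrac{1}{2\curr[\stepalt]}(\norm{\lead - \next}^2 + \norm{\lead - \curr}^2)$, and bound the remaining error $\curr[\wgrad]\ip{\nabla\obj(\lead[\stateavg]) - \F(\lead[\stateavg];\curr[\stateopt])}{\lead - \next}$ via Cauchy--Schwarz followed by the weighted Young inequality $ab \leq \tfrac{\next[\stepalt]}{2}a^2 + \tfrac{1}{2\next[\stepalt]}b^2$.

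The one subtle piece — shared with the stochastic analysis — is the step-size bookkeeping: Young must be invoked with $\next[\stepalt]$ rather than $\curr[\stepalt]$, because only the former already ``sees'' the error term $\curr[\wgrad]^2\norm{\nabla\obj(\lead[\stateavg]) - \F(\lead[\stateavg];\curr[\stateopt])}^2$ through the definition \eqref{eq:adaptive-stepsize-implicit}. This creates a mismatch with the $-\tfrac{1}{2\curr[\stepalt]}\norm{\lead - \next}^2$ coming from the optimality condition, which I would reconcile by adding and subtracting $\tfrac{1}{\next[\stepalt]}\norm{\lead - \curr}^2$ and collecting the monotone-in-$\run$ differences $\tfrac{1}{\next[\stepalt]} - \tfrac{1}{\curr[\stepalt]} \geq 0$, each of which can be multiplied by at most $\eucdiam^2$.

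Summing over $\run = 1, \dots, \nRuns$, the terms $\tfrac{1}{\curr[\stepalt]}\bigl(\norm{\curr - \sol}^2 - \norm{\next - \sol}^2\bigr)$ telescope through Abel summation and are again dominated by $\eucdiam^2\bigl(\tfrac{1}{\stepalt_{\nRuns+1}} - \tfrac{1}{\stepalt_1}\bigr)$. Collecting all residues into $\tfrac{3\eucdiam^2}{2\stepalt_{\nRuns+1}}$, what remains is precisely the claimed bound
\[
\Reg(\sol) \leq \tfrac{1}{2}\Bigl(\tfrac{3\eucdiam^2}{\stepalt_{\nRuns+1}} + \sum_{\run=\start}^{\nRuns}\next[\stepalt]\curr[\wgrad]^2\norm{\nabla\obj(\lead[\stateavg]) - \F(\lead[\stateavg];\curr[\stateopt])}^2 - \tfrac{\norm{\lead - \curr}^2}{\next[\stepalt]}\Bigr).
\]
No convexity or smoothness of $\obj$ is used; as in the stochastic case, only the monotonicity of $\curr[\stepalt]$, $\curr[\wgrad]$, and $\curr[\wgrad]/\curr[\wavg]$ enters. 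I do not expect any real obstacle beyond the step-size indexing just described.
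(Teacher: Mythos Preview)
The proposal is correct and follows precisely the route the paper takes: it mirrors the proof of Proposition~\ref{prop:template-inequality} with the stochastic oracles $\sgrad,\Fs$ replaced by $\nabla\obj,\F$, which is exactly what the paper does (its proof consists of a single sentence stating this). Your identification of the only delicate point—the need to invoke Young's inequality with $\next[\stepalt]$ rather than $\curr[\stepalt]$ and then reconcile via the add/subtract of $\tfrac{1}{\next[\stepalt]}\norm{\lead-\curr}^2$—matches the paper's stochastic argument verbatim.
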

\begin{proof}
    The proof of this theorem is analogous to that of Proposition~\ref{prop:template-inequality} in Section~\ref{app:template-inequality}, up to replacing the stochastic feedback with the deterministic oracle calls.
\end{proof}

\begin{reptheorem}{thm:implicit}
    Let $\{ \lead \}_{\run = \start}^{\nRuns}$ be a sequence generated by \eqref{eq:implicit}, run with the adaptive step-size policy \eqref{eq:adaptive-stepsize-implicit} where $\curr[\wgrad]=\run^{2}$, $\curr[\wavg]=\run^{3}$. Assume that $\obj$ satisfies \eqref{eq:Hess-smooth} and denote the diameter of the set as $\eucdiam$. Then, the following guarantee holds:
     \begin{align*}
        \obj (\lastlead[\stateavg]) - \obj(\sol) \leq O \br{ \frac{ \max \bc{ \sqrt{\stepscale} \frac{\eucdiam^2}{\stepalt}, L \frac{\eucdiam^4 + \eucdiam \stepalt^3}{\stepalt} } }{\nRuns^3} }
    \end{align*}
    When $\stepalt = D$, we obtain the converge rate $O \br{ \frac{ \max \bc{\smooth \eucdiam^3, \sqrt{\stepscale} D }}{\nRuns^3}}$.
\end{reptheorem}
\begin{proof}
    We will initiate our proof at template regret inequality as we proved in Proposition~\ref{prop:template-inequality-implicit}. Our overall strategy is straightforward; we first prove a constant upper bound for the offline weighted regret, then make use of the conversion result in \thmref{thm:conversion} to obtain a convergence rate of order $O(1/ \nRuns^3)$. 
    
    Due to Proposition~\ref{prop:template-inequality-implicit} we have,
    \begin{align*}
        \Reg(\sol[\point]) \leq \frac{1}{2} \br{\frac{3\eucdiam^2}{\gamma_{T+1}} + \sum_{\run=\start}^\nRuns \next[\stepalt] \curr[\wgrad]^2 \norm{\nabla \obj(\lead[\stateavg]) - \F(\lead[\stateavg]; \curr[\stateopt])}^2 - \frac{\norm{\lead - \curr}^2}{\next[\stepalt]} }
    \end{align*}
    We will merge the first two terms and express the first term in the form of the second one using \lemref{lem:technical}. Observe that for the proof of Theorem~\ref{thm:explicit}, we did the opposite and converted the summation into the form of the first term, $\frac{3 \eucdiam^2}{2 \stepalt}$.
    \begin{align*}
        &\Reg(\sol[\point]) \\[1mm]
        &\leq \frac{3\eucdiam^2}{2 \stepalt} \sqrt{ \stepscale + \sum_{\run=\start}^\nRuns \curr[\wgrad]^2 \norm{\nabla \obj(\lead[\stateavg]) - \F(\lead[\stateavg]; \curr[\stateopt])}^2 } \\
        &\quad+ \frac{1}{2} \sum_{\run=\start}^\nRuns \next[\stepalt] \curr[\wgrad]^2 \norm{\nabla \obj(\lead[\stateavg]) - \F(\lead[\stateavg]; \curr[\stateopt])}^2 - \frac{\norm{\lead - \curr}^2}{\next[\stepalt]} \\
        &\leq \frac{3\eucdiam^2 \sqrt{\stepscale}}{2 \stepalt} + \frac{3\eucdiam^2}{2 \stepalt} \sum_{\run=\start}^\nRuns \frac{ \curr[\wgrad]^2 \norm{\nabla \obj(\lead[\stateavg]) - \F(\lead[\stateavg]; \curr[\stateopt])}^2 }{ \sqrt{ \stepscale + \sum_{\runalt=\start}^\run \iter[\wgrad]^2 \norm{\nabla \obj(\iterlead[\stateavg]) - \F(\iterlead[\stateavg]; \iter[\stateopt])}^2 } } \\
        &\quad+ \frac{1}{2} \sum_{\run=\start}^\nRuns \next[\stepalt] \curr[\wgrad]^2 \norm{\nabla \obj(\lead[\stateavg]) - \F(\lead[\stateavg]; \curr[\stateopt])}^2 - \frac{\norm{\lead - \curr}^2}{\next[\stepalt]} \\
        &= \frac{3\eucdiam^2 \sqrt{\stepscale} }{2 \stepalt} + \frac{3\eucdiam^2}{2 \stepalt^2} \sum_{\run=\start}^\nRuns \next[\stepalt] \curr[\wgrad]^2 \norm{\nabla \obj(\lead[\stateavg]) - \F(\lead[\stateavg]; \curr[\stateopt])}^2 \\
        &\quad+ \frac{1}{2} \sum_{\run=\start}^\nRuns \next[\stepalt] \curr[\wgrad]^2 \norm{\nabla \obj(\lead[\stateavg]) - \F(\lead[\stateavg]; \curr[\stateopt])}^2 - \frac{\norm{\lead - \curr}^2}{\next[\stepalt]} \\
        &= \frac{3\eucdiam^2 \sqrt{\stepscale} }{2 \stepalt} + \frac{1}{2} \sum_{\run=\start}^\nRuns { \frac{3\eucdiam^2 + \stepalt^2}{\stepalt^2}} \next[\stepalt] \curr[\wgrad]^2 \norm{\nabla \obj(\lead[\stateavg]) - \F(\lead[\stateavg]; \curr[\stateopt])}^2 - \frac{\norm{\lead - \curr}^2}{\next[\stepalt]},
    \end{align*}
    where we obtain the second inequality due to \lemref{lem:technical} and the last two lines follow from the definition of the step-size in Eq.~\eqref{eq:adaptive-stepsize-implicit} and appropriate regrouping.
    Similar to the proof in the explicit algorithm, we upper bound the negative term using appropriate averaging constants and smoothness. 
    \begin{align*}
        -\frac{\norm{\lead - \curr}^2}{\next[\stepalt]}
        &\leq
        - \frac{4 c^4}{\smooth^2 \eucdiam^2 \next[\stepalt]} \run^4 \norm{ \nabla f(\lead[\stateavg]) - \F(\lead[\stateavg]; \curr[\stateopt]) }^2\\
    \end{align*}
    Setting $\curr[\wgrad] = \run^2$, plugging the bound on the negative term into the original expression we have,
    \begin{align} \label{eq:intermediate-implicit}
         &\leq \frac{3\eucdiam^2 \sqrt{\stepscale} }{2 \stepalt} + \frac{1}{2} \sum_{\run=\start}^\nRuns \br{ \frac{3\eucdiam^2 + \stepalt^2}{\stepalt^2} - \frac{4 c^4}{\smooth^2 \eucdiam^2 \next[\stepalt]^2}} \next[\stepalt] \curr[\wgrad]^2 \norm{\nabla \obj(\lead[\stateavg]) - \F(\lead[\stateavg]; \curr[\stateopt])}^2
    \end{align}
    Our main objective is to show that the above summation is summable so we could show the constant upper bound for the offline regret, hence the acceleration. First off, notice that $\br{ \frac{3\eucdiam^2 + \stepalt^2}{\stepalt^2} - \frac{4 c^4}{\smooth^2 \eucdiam^2 \next[\stepalt]^2}}$ is a non-increasing quantity and we are interested in the time point at which this quantity becomes negative. For that reason, we define the following time point,
    \begin{align*}
        \nRuns_0 = \max \bc{ t \in \mathbb Z \mid \br{ \frac{3\eucdiam^2 + \stepalt^2}{\stepalt^2} - \frac{4 c^4}{\smooth^2 \eucdiam^2 \next[\stepalt]^2}} \geq 0 }.
    \end{align*}
    This immediately implies that for any $\run \leq \nRuns_0$,
    \begin{equation} \label{eq:gamma_T0_implicit}
    \begin{aligned}
        \frac{1}{\next[\stepalt]} &\leq \frac{ \smooth \eucdiam \sqrt{ 3 \eucdiam^2 + \gamma^2 } }{ 2 \gamma c^2 }.
    \end{aligned}
    \end{equation}
    To paint a complete picture, we would like to have a brief discussion on the possible values for $T_0$.
    \begin{enumerate}
        \item $T_0 \leq 0$ implies that the step-size is small enough from the very beginning and that the summation term in Eq.~\eqref{eq:intermediate-implicit} is always bounded by a constant, which immediately implies constant regret and $O(1/\nRuns^3)$ rate.
        \item $T_0 = \infty$ implies that the step-size is always lower bounded by the \textit{inverse of the constant on the right-hand side} of Eq.\eqref{eq:gamma_T0_implicit}. This is equivalent to saying $\sum_{\run=\start}^\infty \curr[\wgrad]^2 \norm{\nabla \obj(\lead[\stateavg]) - \F(\lead[\stateavg]; \curr[\stateopt])}^2 \leq C$ for some constant $C$, which in turn ensures that the summation in Eq.~\eqref{eq:intermediate-implicit} is summable. Once again, we will have the constant regret and $O(1/\nRuns^3)$ rate.
        \item When $T_0$ is a finite positive integer, we can upper bound the summation in Eq.~\eqref{eq:intermediate-implicit} with the same summation up to iteration $T_0$. Note that it is not important whether $\nRuns$ is larger or smaller than $T_0$, as the summands change sign and become negative after $T_0$.
    \end{enumerate}
    Same as in the proof of \method, we need to understand the effect of the initial step-size choice due to $\stepscale$. Imagine the case $\sqrt{\stepscale} \geq \frac{ \smooth \eucdiam \sqrt{ 3 \eucdiam^2 + \gamma^2 } }{ 2 \gamma c^2 }$. This implies that $T_0 < 0$ and that the step-size is already small enough to make the summation negative from the first step onwards. In that scenario, the condition in Eq.~\eqref{eq:gamma_T0_implicit} doesn't hold so we should consider the effect of this initial setup for the final bound.
    For the case when $T_0 > 0$, we can safely unify all the 3 cases above and simply upper bound the expression in Eq.~\eqref{eq:intermediate-implicit} by rewriting the summation up to $T_0$. Therefore,
    \begin{align*}
        &\leq \frac{3\eucdiam^2 \sqrt{\stepscale}}{2 \stepalt} + \frac{1}{2} \sum_{\run=\start}^\nRuns \br{ \frac{3\eucdiam^2 + \stepalt^2}{\stepalt^2} - \frac{4 c^4}{\smooth^2 \eucdiam^2 \next[\stepalt]^2}} \next[\stepalt] \curr[\wgrad]^2 \norm{\nabla \obj(\lead[\stateavg]) - \F(\lead[\stateavg]; \curr[\stateopt])}^2 \\
        &\leq \frac{3\eucdiam^2 + \stepalt^2}{2 \stepalt} \sqrt{\stepscale} + \frac{3\eucdiam^2 + \stepalt^2}{2 \stepalt^2} \sum_{\run=\start}^{\nRuns_0} \next[\stepalt] \curr[\wgrad]^2 \norm{\nabla \obj(\lead[\stateavg]) - \F(\lead[\stateavg]; \curr[\stateopt])}^2 \\
        &= \frac{3\eucdiam^2  + \stepalt^2}{2 \stepalt} \sqrt{\stepscale} + \frac{3\eucdiam^2 + \stepalt^2}{2 \stepalt} \sum_{\run=\start}^{\nRuns_0} \frac{ \curr[\wgrad]^2 \norm{\nabla \obj(\lead[\stateavg]) - \F(\lead[\stateavg]; \curr[\stateopt])}^2 }{ \sqrt{ \stepscale + \sum_{\runalt=\start}^\run \iter[\wgrad]^2 \norm{\nabla \obj(\iterlead[\stateavg]) - \F(\iterlead[\stateavg]; \iter[\stateopt])}^2 } }  \\
        &\leq \frac{3\eucdiam^2 + \stepalt^2}{\stepalt} \sqrt{ \stepscale + \sum_{\run=\start}^{\nRuns_0} \curr[\wgrad]^2 \norm{\nabla \obj(\lead[\stateavg]) - \F(\lead[\stateavg]; \curr[\stateopt])}^2 } \\
        &= \br{3\eucdiam^2 + \stepalt^2} \frac{1}{\stepalt_{T_0 + 1}} \\
        &\leq \frac{ \smooth \eucdiam \br{ 3 \eucdiam^2 + \gamma^2 }^{3/2} }{ 2 \gamma c^2 }
    \end{align*}
    We combine the case for $T_0 < 0$ with the one above to established the constant regret bound
    \begin{align*}
        \Reg(\sol) \leq O \br{ \max \bc{ \sqrt{\stepscale} \frac{\eucdiam^2}{\stepalt}, L \frac{\eucdiam^4 + \eucdiam \stepalt^3}{\stepalt} } }
    \end{align*}
    Plugging this result in its place we obtain the convergence rate,
    \begin{align*}
        \obj (\lastlead[\stateavg]) - \obj(\sol) \leq O \br{ \frac{ \max \bc{ \sqrt{\stepscale} \frac{\eucdiam^2}{\stepalt}, L \frac{\eucdiam^4 + \eucdiam \stepalt^3}{\stepalt} } }{\nRuns^3} }
    \end{align*}
\end{proof}

\end{document}